\theoremstyle{plain} 
\numberwithin{equation}{section}
\newtheorem{thm}[equation]{Theorem}
\newtheorem{lemma}[equation]{Lemma}
\newtheorem{cor}[equation]{Corollary}
\newtheorem{prop}[equation]{Proposition}
\newtheorem{defi}[equation]{Definition}
\newtheorem{hyp}[equation]{Hypothesis}
\theoremstyle{definition}
\newtheorem{rem}[equation]{Remark}
\theoremstyle{remark}
\def\CC{{\mathcal{C}}}
\def\CE{{\mathcal{E}}}
\def\CF{{\mathcal{F}}}
\def\CI{{\mathcal{I}}}
\def\CM{{\mathcal{M}}}
\def\CS{{\mathcal{S}}}
\def\CV{{\mathcal{V}}}
\def\CV{{\mathcal{V}}}
\def\bF{{\mathbb F}}
\def\bP{{\mathbb P}}
\def\bZ{{\mathbb Z}}
\def\Proj{\operatorname{Proj}\nolimits}
\def\res{\operatorname{res}\nolimits}
\def\Rad{\operatorname{Rad}\nolimits}
\def\Soc{\operatorname{Soc}\nolimits}
\def\modg{\operatorname{{\bf mod}(\text{$kG$})}\nolimits}
\def\Modg{\operatorname{{\bf Mod}(\text{$kG$})}\nolimits}
\def\modh{\operatorname{{\bf mod}(\text{$kH$})}\nolimits}
\def\stmodg{\operatorname{{\bf stmod}(\text{$kG$})}\nolimits}
\def\Stmodg{\operatorname{{\bf StMod}(\text{$kG$})}\nolimits}
\def\stmod{\operatorname{{\bf stmod}}\nolimits}
\def\HHH{\operatorname{H}\nolimits}
\def\Hom{\operatorname{Hom}\nolimits}
\def\PHom{\operatorname{PHom}\nolimits}
\def\End{\operatorname{End}\nolimits}
\def\HH#1#2#3{\HHH^{#1}(#2,#3)}
\def\HHHH{\operatorname{\widehat{\HHH}}\nolimits}
\def\Homul{\operatorname{\underline{Hom}}\nolimits}
\def\Endul{\operatorname{\underline{End}}\nolimits}
\def\Ext{\operatorname{Ext}\nolimits}
\def\hgs{\HH{*}{G}{k}}
\def\hes{\HH{*}{E}{k}}
\begin{document}

\title[Endomorphism ring of the trivial module]
{The endomorphism ring of the trivial module in a localized category}
\author[Jon F. Carlson]{Jon F. Carlson}
\address{Department of Mathematics, University of Georgia, 
Athens, GA 30602, USA}
\email{jfc@math.uga.edu}
\thanks{Research partially supported by 
Simons Foundation grant 054813-01}
\keywords{finite group representations, stable module category, idempotent
modules, Verdier localization}


\begin{abstract}
Suppose that $G$ is a finite group and $k$ is a field of characteristic
$p >0$. Let $\CM$ be the thick tensor ideal of finitely generated 
modules whose support variety is in a fixed subvariety $V$ of the projectivized
prime ideal spectrum $\Proj \HHH^*(G,k)$. Let $\CC$ denote the Verdier 
localization of the stable module category $\stmod(kG)$ at $\CM$. We 
show that if $V$ is a finite collection of closed points and if 
the $p$-rank every  maximal elementary abelian $p$-subgroups of $G$
is at least 3, then the endomorphism ring of the trivial 
module in $\CC$ is a local ring whose unique maximal ideal is infinitely
generated and nilpotent. In addition, we show an example where the 
endomorphism ring in $\CC$ of a compact object is not finitely presented as a 
module over the endomorphism ring of the trivial module.
\end{abstract}
\maketitle

\section{Introduction}
Suppose that $G$ is a finite group and that $k$ is a field of characteristic
$p > 0$. The stable category $\stmodg$ of finitely generated $kG$-modules 
is a tensor triangulated category. A thick tensor ideal in $\stmodg$
is determined by the support variety of its objects. Hence, for any 
closed subvariety $V$ in $V_G(k) = \Proj \hgs$, the full subcategory 
$\CM_V$ of all 
finitely generated $kG$-modules $M$ with $V_G(M) \subset V$, is a 
thick subcategory that is closed under tensor product with any 
finitely generated $kG$-module. Moreover, every thick tensor ideal can
be defined in this or a similar way. 
Associated to $\CM_V$ is a distinguished triangle 
\[
\xymatrix{ 
{} \ar[r] & \CE_V \ar[r] & k \ar[r] & \CF_V \ar[r] & {}
}
\]
where $\CE_V$ and $\CF_V$ are idempotent $kG$-modules that are almost 
always infinitely generated. 
In addition, $\CE_V \otimes M \cong M$ in the stable category if and 
only if $V_G(M) \subseteq V$. Tensoring with 
$\CF_V$ is the localizing functor 
to the Verdier localization $\CC_V$ of $\stmodg$ at $\CM_V$. Thus the 
localized category $\CC_V$ is embedded in the stable category 
$\Stmodg$ of all $kG$-modules. 

In the localized category $\CC_V$, the trivial 
module $k$ is identified with $\CF_V$ and the ring $\End_{\CC_V}(k)$ is 
isomorphic to $\End_{\Stmodg}(\CF_V)$. For $kG$-modules $M$ and $N$, 
the group $\Hom_{\CC_V}(M,N)$ is a module over $\End_{\CC_V}(k)$.
This suggests that modules in the category $\CC_V$ can be distinguished
by invariants such as the annihilators of their endomorphism rings or 
cohomology rings. Such is the essence of the support variety theory that
classifies the thick tensor ideals in $\stmod(kG)$. In subsequent work
\cite{Clocvar}, we show that there is such a theory that is nontrivial in 
the case of the colocalized category generated by $\CE_V$. However, 
in the localized category, there are additional complications, as we 
see in this paper.   

In this paper, we complete the task of characterizing $\End_{\CC_V}(k)$
in the case that the maximal elementary abelian subgroups of  
$G$ have sufficiently large $p$-rank and 
the variety $V$ is a finite collection of closed points. 
We prove in such cases that the 
ring $\End_{\CC_V}(k)$ is a local ring whose maximal ideal is 
infinitely generated and nilpotent. 

Our study relies on earlier results in \cite{Ctriv} that proves the 
special case in which $V$ is a single closed point in $V_G(k)$ and 
$G$ is elementary abelian. More generally, that paper shows that 
the nonpositive Tate cohomology ring of any finite group $H$ can be 
realized as the endomorpism ring of the trivial  
module in the Verdier localization $\CC_V$ where $V$ is a single point
in the spectrum of the cohomology ring of $G = C \times H$ for $C$
a cyclic group of order $p$. The proof of our main theorem also 
requires the fact that nilpotence
in cohomology can be detected on restrictions to 
elementary abelian $p$-subgroups of a finite group \cite{CQ}. This 
theorem does not hold for $G$ a general finite group scheme, and hence 
the proof of our main theorem does not extend to that realm.

The next section presents an introduction and references to the 
categories and recalls some theorems on support varieties.
In the three sections that follow, we review the main theorem of \cite{Ctriv} 
and extend the result to the case in which the variety $V$ is a 
finite collection of more than one closed points. In section 5,
we prove the main theorem for any finite group whose maximal 
elementary abelian $p$-subgroups have $p$-rank at least three. 
In section 6, we look at the restriction of the module $\CF_V$ from
an elementary abelian group to one of its proper subgroup. This result 
is used in the final section to show by example that even for a 
compact object $M$ in $\stmodg$, it is possible that $\End_{\CC_V}(M)$
is not finitely generated over $\End_{\CC_V}(k)$. In the other direction,
we show also in Section 7 that if $V$ is the subvariety of all 
homogeneous prime ideals that contain a single non-nilpotent element
of cohomology, then for any compact object $M$, $\End_{\CC_V}(M)$
is finitely generated over $\End_{\CC_V}(k)$. In the final section 
we present an example that shows more of the stucture of the 
idempotent module $\CF_V$ in the case of groups that are not elementary
abelian $p$-groups. 

We would like to thank Paul Balmer for helpful conversations
and information. 

\section{Background}
In this section, we review some background. As references, we refer the
reader to \cite{CTVZ} or \cite{Bbook} 
for information on the cohomology of finite groups
and support varieties in this context. For information on triangulated
categories see \cite{Neem}. A lot of the background material is
summarized very well in the paper \cite{BF} of Balmer and Favi.

Throughout the paper, we let $G$ be a finite group and $k$ a field of
characteristic $p > 0$. For convenience, we assume that $k$ is algebraically
closed. Recall that $kG$ is a Hopf algebra so that if $M$ and $N$ are
$kG$-modules, then so is $M \otimes_k N$. In general, we write
$\otimes$ for $\otimes_k$.

Let $\modg$ denote the category of finitely generated $kG$-modules
and $\Modg$ the category of all $kG$-module. Let $\stmodg$ be the stable
category of finitely generated $kG$-module modulo projectives. The objects in
$\stmodg$ are the same as those in $\modg$. If $M$ and $N$ are finitely
generated $kG$-modules, then the group of morphisms 
from $M$ to $N$ in the stable category is
the quotient $\Homul_{kG}(M,N) = \Hom_{kG}(M,N)/\PHom_{kG}(M,N)$ where
$\PHom_{kG}(M,N)$ is the set of homomorphisms that factor through projective
modules. The definition of the stable  category of all modules
$\Stmodg$ is similar.

The stable categories $\stmodg$ and $\Stmodg$ are tensor triangulated
categories. The tensor is the one given by the Hopf algebra structure on
$kG$ as mentioned above. Triangles correspond roughly to exact sequences
in the module categories. The translation functor for both is $\Omega^{-1}$,
so that a triangle looks like
\[
\xymatrix{
A \ar[r] & B \ar[r] & C \ar[r] & \Omega^{-1}(A)
} \]
where for some projective module $P$ there is an exact sequence
$0 \to A \to B \oplus P \to C \to 0$. Here, $\Omega^{-1}(M)$
is the cokernel of an injective hull $M \hookrightarrow I$ for $I$ Injective. 

The cohomology ring $\HHH^*(G,k)$ is a finitely generated,
graded-commutative algebra over $k$. Let $V_G(k) = \Proj(\HHH^*(G,k)$
be its projectivized prime ideal spectrum, the collection of all
homogeneous prime ideals with the Zariski topology. The support
variety of a finitely generated $kG$-module $M$ is the closed subvariety
consisting of all homogeneous prime ideals that contain the
annihilator of $\Ext^*_{kG}(M,M)$ in $\HHH^*(G,k)$. The support variety
of an infinitely generated $kG$-module is a subset of $V_G(k)$,
not necessarily closed (see \cite{BCR2}).

If $H$ is a subgroup of $G$, the restriction functor $\modg \to \modh$
induces a map on cohomology ring $\res_{G,H}:\HHH^*(G,k) \to 
\HHH^*(H,k)$ and also a map on sectra $\res^*_{G,H}:V_H(k) 
\to V_G(k)$. 

For much of the next few sections, we assume
that $G = \langle g_1, \dots, g_r \rangle$ is an
elementary abelian $p$-group of order $p^r$. 
In this case, we set $X_i = g_i -1 \in kG$ for $i = 1, \dots, r.$
Then $X_i^p = 0$ and $kG \cong k[X_1, \dots, X_r]/(X_1^p, \dots, X_r^p)$.
With this structure in mind, we make the following definition.

\begin{defi} \label{def:flatalg}
Suppose that $kG$ is the group algebra of an elementary abelian $p$-group. 
A flat subalgebra of $kG$ is the image in $kG$ of a flat map 
$\alpha: k[t_1, \dots, t_s]/(t_1^p, \dots, t_s^p) \to kG$. We say 
a flat subalgebra is maximal if $s= r-1$ where $r$ is the $p$-rank 
of $G$.  
\end{defi}

By definition, a map $\alpha$, as above, is flat if $kG$ is a 
projective module over the image of the ring 
$k[t_1, \dots, t_s]/(t_1^p, \dots, t_s^p)$. 
This happens if and only if the images $\alpha(t_1), \dots, \alpha(t_s)$,
in $\Rad(kG)/\Rad^2(kG)$ are $k$-linearly independent.   In particular,
we have the following. 

\begin{lemma} \label{lem:complement}
Suppose that $G$ is an elementary abelian $p$-group of $p$-rank $r$.
Let $\alpha: k[t_1, \dots, t_s]/(t_1^p, \dots, t_s^p) \to kG$ be a 
flat map. Then there exists another flat map 
$\beta: k[t_1, \dots, t_{r-s}]/(t_1^p, \dots, t_{r-s}^p) \to kG$
such that $kG$ is the internal tensor product $kG = A \otimes B$, 
where $A$ and $B$ are the images of $\alpha$ and $\beta$, respectively. 
\end{lemma}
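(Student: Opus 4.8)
The plan is to reduce everything to the concrete model $kG \cong k[X_1,\dots,X_r]/(X_1^p,\dots,X_r^p)$ together with the elementary observation that $w^p = 0$ for \emph{every} $w \in \Rad(kG)$: writing $w = f(X_1,\dots,X_r)$ with $f(0) = 0$, in characteristic $p$ one gets $w^p = f(X_1,\dots,X_r)^p = \sum_\alpha c_\alpha^{\,p} X^{p\alpha}$, and each monomial $X^{p\alpha}$ with $\alpha \neq 0$ contains a factor $X_i^{p\alpha_i}$ with $p\alpha_i \geq p$, hence vanishes. Consequently, a $k$-algebra map into $kG$ out of a truncated polynomial ring $k[t_1,\dots,t_m]/(t_1^p,\dots,t_m^p)$ is exactly an arbitrary choice of images in $\Rad(kG)$ for the $t_i$, and by the criterion recalled above such a map is flat precisely when the residues of those images are linearly independent in $\Rad(kG)/\Rad^2(kG)$, a $k$-space of dimension $r$.

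The heart of the matter is the following: if $w_1,\dots,w_r \in \Rad(kG)$ have residues forming a $k$-basis of $\Rad(kG)/\Rad^2(kG)$, then the algebra map $\phi\colon k[T_1,\dots,T_r]/(T_1^p,\dots,T_r^p) \to kG$ with $\phi(T_i) = w_i$ is an isomorphism. For surjectivity, let $A' = \Im \phi$; this is a local $k$-subalgebra with nilpotent maximal ideal $\fn = A' \cap \Rad(kG)$. By Nakayama applied to the $kG$-module $\Rad(kG)$, the $w_i$ generate $\Rad(kG)$ as an ideal, so $kG = k + \Rad(kG) = A' + \fn\cdot kG$; since $kG$ is a finitely generated $A'$-module and $\fn = \Rad(A')$, Nakayama over $A'$ now gives $A' = kG$. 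Finally $\dim_k k[T_1,\dots,T_r]/(T_1^p,\dots,T_r^p) = p^r = |G| = \dim_k kG$, so a surjective $k$-linear map between them is an isomorphism.

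To finish, put $u_i = \alpha(t_i) \in \Rad(kG)$ for $i = 1,\dots,s$; flatness of $\alpha$ says the residues $\bar u_1,\dots,\bar u_s$ are linearly independent, so one can choose $v_1,\dots,v_{r-s} \in \Rad(kG)$ whose residues complete them to a basis of $\Rad(kG)/\Rad^2(kG)$, and set $\beta(t_j) = v_j$; this $\beta$ is a well-defined flat map. With $A = \Im\alpha$ and $B = \Im\beta$, applying the isomorphism $\phi$ of the previous paragraph to the list $(u_1,\dots,u_s,v_1,\dots,v_{r-s})$ and restricting to the subalgebras generated by $\{T_1,\dots,T_s\}$ and by $\{T_{s+1},\dots,T_r\}$ identifies $A$ with $k[T_1,\dots,T_s]/(T_1^p,\dots,T_s^p)$ and $B$ with $k[T_1,\dots,T_{r-s}]/(T_1^p,\dots,T_{r-s}^p)$; in particular $\dim_k A = p^s$ and $\dim_k B = p^{r-s}$. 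The multiplication map $A \otimes_k B \to kG$ has image the subalgebra generated by $A \cup B$, which is all of $kG$ by the surjectivity established above, and $\dim_k(A \otimes_k B) = p^s p^{r-s} = p^r = \dim_k kG$, so multiplication is an isomorphism; that is precisely the asserted internal tensor decomposition $kG = A \otimes B$. The one genuinely non-formal step is the surjectivity in the key lemma, specifically the Nakayama argument over the subalgebra $A'$, which is where finiteness of $kG$ over $A'$ is used; everything else is dimension bookkeeping.
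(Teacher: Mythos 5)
Your proof is correct and follows the same route as the paper's: complete $\alpha(t_1),\dots,\alpha(t_s)$ to a set of elements whose classes give a basis of $\Rad(kG)/\Rad^2(kG)$, define $\beta$ from the complementary elements, and conclude by Nakayama plus a dimension count. The paper dispatches that last step in a single sentence ("by Nakayama's Lemma and a dimension argument"), whereas you spell out the two Nakayama applications — first over $kG$ to see the $w_i$ generate $\Rad(kG)$ as an ideal, then over the subalgebra $A'$ to get surjectivity — and the subsequent identification of $A\otimes_k B$ with $kG$ via the multiplication map. This is exactly the argument the paper has in mind, just written out in full; no gaps.
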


\begin{proof}
Choose elements $m_1, \dots, m_{r-s}$ in $kG$, such that the  classes
modulo $\Rad^2(kG)$ of  
$\alpha(t_1), \dots, \alpha(t_s), m_1, \dots, m_{r-s}$
form a basis for $\Rad(kG)/\Rad^2(kG)$. Then let $\beta$ be defined
by $\beta(t_i) = m_i$ for $i = 1, \dots, s$. Then $AB = kG$, by 
Nakayama's Lemma and a dimension argument. 
\end{proof}

If $\alpha$, as above, is a flat map, then 
the multiplicative subgroup generated
by the images $\alpha(1+t_i)$ is called a shifted subgroup of $kG$ in
other papers. It is an elementary abelian $p$-subgroup of the group
of units of $kG$.  In the case that $s = 1$, we have an 
example of a $\pi$-point. 

\begin{defi} \label{defi:pipoint} \cite{FP}
A $\pi$-point is a flat map $\alpha_K: K[t]/(t^p) \to KG_K$
where $K$ is an extension of the field $k$. If $G$ 
is a finite group scheme that is not
elementary abelian, then we assume also that $\alpha_K$ factors
by flat maps through a unipotent abelian subgroup scheme of $KG_K$.
Two $\pi$-points $\alpha_K: K[t]/(t^p) \to KG_K$ and 
$\beta_L: L[t]/(t^p) \to LG_L$ are equivalent if for any finitely generated
$kG$-module $M$, the restriction $\alpha_K^*(K \otimes M)$ is 
projective if and only if $\beta^*_L(L \otimes M)$ is projective.
\end{defi}

The set of equivalence classes of $\pi$-point has a partial order 
coming from specializations, and that ordering gives the set a 
topology. With this in mind we have the following, which holds for 
any finite group scheme $G$. 

\begin{thm} \label{thm:rankvar} \cite{FP}
The space of equivalence classes of $\pi$-points is homeomorphic 
to $V_G(k) = \Proj \hgs.$
\end{thm}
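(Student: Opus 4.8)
This is the $\pi$-point theorem of \cite{FP}; the plan is to produce an explicit comparison map $\Psi$ from equivalence classes of $\pi$-points to $\VG$, prove it is a bijection, and then check that it and its inverse are continuous by matching the closed sets on the two sides. To a $\pi$-point $\alpha_K\colon K[t]/(t^p)\to KG_K$ I would attach the homogeneous ideal
\[
\fp_\alpha \;=\; \Ker\bigl(\HHH^*(G,k)\to \HHH^*(G_K,K)\xrightarrow{\ \alpha_K^*\ }\HHH^*(K[t]/(t^p),K)\bigr),
\]
where the first arrow is base change; modulo nilpotents the target is a graded polynomial ring in one variable, hence a domain, so $\fp_\alpha$ is genuinely prime, and flatness of $\alpha_K$ forces it to be proper, i.e. a point of $\VG$. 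The first thing to prove is that this descends to equivalence classes: for homogeneous $\zeta\in\HHH^*(G,k)$, Carlson's module $L_\zeta$ has the property that $\alpha_K^*(K\otimes L_\zeta)$ is projective over $K[t]/(t^p)$ exactly when $\alpha_K^*(\zeta)$ is a unit, i.e. when $\zeta\notin\fp_\alpha$; thus $\fp_\alpha$ is reconstructed from the equivalence class of $\alpha_K$ alone as the set of $\zeta$ at which $L_\zeta$ is non-projective, and $\Psi$ is well defined.

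For surjectivity I would invoke classical rank variety theory. When $G$ is elementary abelian, a flat map $k[t]/(t^p)\to kG$ is recorded by the line it spans in $\Rad(kG)/\Rad^2(kG)$, and the Avrunin--Scott theorem identifying the rank variety with $\VG$ shows every closed point arises this way; a non-closed point $\fp$ is reached by extending scalars to its residue field $K$, where it becomes $K$-rational, and realizing it by a $\pi$-point over $K$. For general $G$, given $\fp\in\VG$, Quillen's stratification writes $\fp=\res^*_{G,E}(\fq)$ for some elementary abelian $p$-subgroup $E$ and $\fq\in V_E(k)$; a $\pi$-point $\beta_K$ of $E$ with $\fp_\beta=\fq$, composed with the flat inclusion $kE\hookrightarrow kG$, is then a $\pi$-point of $G$ with $\fp_\alpha=\fp$.

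Injectivity is the heart of the argument and the step I expect to be the main obstacle. It amounts to the statement that the $\pi$-support of a finitely generated module $M$ --- the set of classes of $\alpha_K$ with $\alpha_K^*(K\otimes M)$ non-projective --- is exactly the cohomological support variety $V_G(M)$; granting this, two $\pi$-points with the same kernel in cohomology are non-projective on precisely the same modules, hence equivalent. The containment of the $\pi$-support in $V_G(M)$ is another $L_\zeta$ computation, but the reverse containment needs the machinery of \emph{generic $\pi$-points}: over the function field of an irreducible closed subset one produces a single $\pi$-point whose projectivity behaviour governs the whole component, the essential inputs being a Dade-type lemma (projectivity over $K[t]/(t^p)$ is insensitive to the generic choice of parameter) together with the detection of projectivity and of nilpotence in cohomology on elementary abelian $p$-subgroups \cite{CQ}. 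This is exactly the place where the hypothesis that $G$ is a finite group, rather than an arbitrary finite group scheme --- equivalently, the factorization hypothesis built into the definition of a $\pi$-point --- is needed.

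It remains to upgrade the bijection $\Psi$ to a homeomorphism. On both sides the topology is pinned down by declaring the supports of finitely generated modules --- $\pi$-supports on one side, cohomological support varieties on the other --- to be closed, and closing under arbitrary intersection and finite union. The identification of the $\pi$-support of $M$ with $V_G(M)$ proved for injectivity matches these two families of generating closed sets, and one recalls in addition that every Zariski-closed subvariety of $\VG$ equals $V_G(M)$ for a suitable finitely generated $M$ (Carlson's realization theorem, again via the modules $L_\zeta$ and intersections thereof), while every $V_G(M)$ is closed. Hence $\Psi$ is a closed continuous bijection, so a homeomorphism, which is the assertion.
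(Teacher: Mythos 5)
The paper does not prove this theorem; it states it as a citation to \cite{FP} and moves on, so there is no internal argument to compare your sketch against. What the paper \emph{does} say, in the sentence immediately preceding the theorem, is that the result ``holds for any finite group scheme $G$,'' and this is where your proposal goes wrong. You write that the injectivity step ``is exactly the place where the hypothesis that $G$ is a finite group, rather than an arbitrary finite group scheme \dots is needed,'' but Friedlander and Pevtsova prove the homeomorphism $\Pi(G)\cong \Proj \HHH^*(G,k)$ for \emph{all} finite group schemes. The factorization condition in Definition \ref{defi:pipoint} (that a $\pi$-point factor flatly through an abelian unipotent subgroup scheme) is built into the definition precisely so that the theorem survives in that generality; it is not a restriction that forces $G$ to be a constant group. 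The result that genuinely is special to finite groups, and which the author flags as the obstruction to extending the paper's \emph{own} main theorem to finite group schemes, is the nilpotence detection theorem of \cite{CQ} used later in the proof of Theorem \ref{thm:general}. You appear to have transplanted that caveat onto the wrong theorem.

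Beyond that misattribution, the remainder of your outline --- building $\Psi$ from the kernel of $\alpha_K^*$ in cohomology, checking well-definedness on equivalence classes via the Carlson modules $L_\zeta$, getting surjectivity from rank varieties in the elementary abelian case and Quillen stratification in general, establishing injectivity by identifying $\pi$-supports with cohomological supports, and matching the generating families of closed sets to upgrade the bijection to a homeomorphism --- is a fair high-level account of the Friedlander--Pevtsova strategy for finite groups. One should be aware, though, that their actual argument is more uniform: for infinitesimal group schemes they pass through one-parameter subgroups (Suslin--Friedlander--Bendel) rather than through elementary abelian $p$-subgroups, and the ``generic $\pi$-point'' technology together with a scheme-theoretic version of Dade's lemma replaces the role Quillen stratification plays in your version. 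Since the present paper only works with finite groups the distinction does not affect anything downstream, but it matters if you wish to justify the theorem in the generality in which it is being invoked.
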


The point is that if $A = K[t]/(t^p)$, then $\HHH^*(A,K)/\Rad(\HHH^*(A,K))$
is a polynomial ring in one variable. So, if $\alpha: A \to KG$ is a 
$\pi$-point, then the kernel of the composition
\[
\xymatrix{
\hgs \ar[r]^{\alpha^*} & \HHH^*(A,K) \ar[r] & \HHH^*(A,K)/\Rad(\HHH^*(A,K))
}
\] 
is a prime ideal. Equivalent $\pi$-points determine the same prime ideal.

With the identification given by the theorem, we can define the support
variety $\CV_G(M)$ of any $kG$-module $M$ 
to be the set of all equivalence classes 
of $\pi$-point $\alpha_K:K[t]/(t^p) \to KG_K$ such that the restriction
$\alpha_K^*(K \otimes M)$ is not a free $KG$-module. In the case that 
$M$ is finitely generated, $\CV_G(M) \simeq V_G(M)$ is a closed set. 

\begin{rem}\label{rem:quillen}
If $G$ is a finite group that is not elementary abelian, then
the Quillen Dimension Theorem (see \cite{Quil} or 
\cite[Theorem 8.4.6]{CTVZ}) says
that $V_G(k) = \Proj \HHH^*(G,k) = \cup \res^*_{G,E}(V_E(k))$,
where the union is over the elementary abelian $p$-subgroups $E$ of $G$. 
This assures us that every $\pi$-point is equivalent to one that 
factors through the inclusion of the group algebra of some 
elementary abelian $p$-subgroup $E$ of $G$ into $kG$. Or, stated another
way, every homogeneous prime ideal in $\hgs$ contains the kernel
of the restriction $\res_{G,E}:\hgs \to \hes,$ for some elementary 
abelian $p$-subgroup $E.$
\end{rem}

\section{Point varieties}
A subcategory $\CM$ of a triangulated category $\CC$
is thick if it is triangulated
and closed under taking direct summands. It is a thick tensor ideal if it
is thick and if, for any $X \in \CC$ and $Y \in \CM$, $X \otimes Y$ is in
$\CM$. For $V$ a
closed subset of $V_G(k)$, let $\CM_V$ be the thick tensor ideal
in $\stmodg$ consisting of all finitely generated $kG$-modules $M$
with $V_G(M) \subseteq V$. More generally, let $\CV$ be a collection
of closed subsets $V_G(k)$ that is closed under taking finite unions
and specializations (meaning that if $U \subseteq V \in \CV$ then
$U \in \CV$). Then the subcategory $\CM_{\CV}$ of all finitely generated
modules $M$ with $V_G(M) \in \CV$ is a thick tensor ideal. Indeed, this is 
the story. 

\begin{thm} \label{thm:bcr} \cite{BCR}
If $\CM$ is a thick tensor ideal in $\stmodg$, then $\CM = \CM_{\CV}$
for some collection $\CV$ of closed subsets of $V_G(k)$ that is closed
under finite unions and specializations.
\end{thm}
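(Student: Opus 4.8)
The plan is to reduce the theorem to a single generation statement in the stable category and then to prove that statement by passing to idempotent modules. Given a thick tensor ideal $\CM$, set $\CV$ to be the collection of all closed subsets $W \subseteq V_G(k)$ such that $W \subseteq V_G(M)$ for some $M \in \CM$. This $\CV$ is closed under specialization by construction, and it is closed under finite unions because $V_G(M) \cup V_G(N) = V_G(M \oplus N)$ while $M \oplus N \in \CM$. Since $V_G(M) \in \CV$ for every $M \in \CM$, one has $\CM \subseteq \CM_{\CV}$ immediately. The reverse containment is the real content: if $N \in \CM_{\CV}$, then $V_G(N) \subseteq V_G(M)$ for some $M \in \CM$, and since $\CM$ contains the thick tensor ideal $\thick^{\otimes}(M)$ generated by $M$, it suffices to prove that $V_G(N) \subseteq V_G(M)$ forces $N \in \thick^{\otimes}(M)$ for finitely generated $N$ and $M$.

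To prove that, I would pass to $\Stmodg$. The localizing tensor ideal generated by $M$ produces, via Bousfield localization, an idempotent triangle $\CE \to k \to \CF$ with $\CF \otimes M = 0$ in $\Stmodg$; and because the compact objects of $\Stmodg$ are exactly the finitely generated modules, Neeman's localization theorem identifies $\thick^{\otimes}(M)$ with the subcategory of compact objects of this localizing tensor ideal. Hence a finitely generated $N$ lies in $\thick^{\otimes}(M)$ if and only if $N \otimes \CF = 0$ in $\Stmodg$. So it is enough to show $N \otimes \CF = 0$, and since an object of $\Stmodg$ vanishes once it restricts to a free module along every $\pi$-point, it suffices to check that $\alpha_K^{*}(K \otimes N \otimes \CF) = 0$ for each $\pi$-point $\alpha_K : K[t]/(t^p) \to KG_K$.

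Fix $\alpha_K$ and write $F_\alpha = \alpha_K^{*}(K \otimes -)$, a coproduct-preserving, tensor-exact functor into $\Stmod(K[t]/(t^p))$. If $\alpha_K \notin V_G(N)$ then $F_\alpha(N) = 0$ and there is nothing to prove. If $\alpha_K \in V_G(N)$, then also $\alpha_K \in V_G(M)$, so $M' := F_\alpha(M)$ is a non-free $K[t]/(t^p)$-module; an elementary computation with Jordan blocks then shows that the trivial module is a direct summand of $M' \otimes (M')^{*}$, and so $K \in \thick^{\otimes}(M')$ in $\Stmod(K[t]/(t^p))$. Since $F_\alpha(\CF) \otimes M' = F_\alpha(\CF \otimes M) = 0$ and $K$ is built from $M'$ and its tensor products with finitely generated modules by triangles and summands, tensoring with $F_\alpha(\CF)$ gives $F_\alpha(\CF) = F_\alpha(\CF) \otimes K = 0$, and therefore $F_\alpha(N \otimes \CF) = F_\alpha(N) \otimes F_\alpha(\CF) = 0$. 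This finishes the argument, and is in effect the tensor-nilpotence mechanism underlying this classification.

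The step that carries the weight is the detection principle invoked above, that a $kG$-module which restricts to a free module along every $\pi$-point must vanish in $\Stmodg$ — applied here to $N \otimes \CF$, which is typically \emph{not} finitely generated. For finitely generated modules this is Dade's lemma together with Quillen stratification, but in the generality needed it is the infinite-dimensional form of Dade's lemma, and this is exactly where one uses that $G$ is an honest finite group rather than an arbitrary finite group scheme. The remaining ingredients — the formalism of Rickard idempotent modules, the compactly-generated localization bookkeeping, and the $\pi$-point description of support — are either formal or recalled in the preceding sections.
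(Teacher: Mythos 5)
This theorem is quoted from \cite{BCR} and the paper supplies no proof of its own, so the comparison is really with the Benson--Carlson--Rickard argument. Your reduction to the single-generator statement (if $V_G(N)\subseteq V_G(M)$ with $M,N$ compact then $N\in\thick^{\otimes}(M)$), and your method of proof via the Rickard idempotent triangle, the Neeman--Thomason identification of compacts in a compactly generated localizing tensor ideal, the Jordan block retract trick, and detection of projectivity along $\pi$-points, is genuinely the same circle of ideas that appears in \cite{BCR}, modulo the translation from rank varieties to the later $\pi$-point language of \cite{FP}. The logical skeleton is sound.

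There is one point where the write-up slides past the hard content. You describe $F_\alpha = \alpha_K^{*}(K\otimes -)$ as ``tensor-exact'' and then use $F_\alpha(\CF\otimes M)=F_\alpha(\CF)\otimes F_\alpha(M)$ and $F_\alpha(N\otimes\CF)=F_\alpha(N)\otimes F_\alpha(\CF)$ as equalities. A $\pi$-point is a flat algebra map, \emph{not} a Hopf algebra map, so restriction along $\alpha_K$ is not a monoidal functor and those equalities do not hold as stable isomorphisms: the comultiplication on $KG_K$ and the comultiplication on $K[t]/(t^p)$ disagree after transport along $\alpha_K$. What is true, and what your argument actually needs, is the weaker statement that $F_\alpha(X\otimes Y)$ is free if and only if $F_\alpha(X)\otimes F_\alpha(Y)$ is free. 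For finitely generated modules this is the Avrunin--Scott step inside the tensor product theorem for support varieties; for the infinitely generated objects you tensor with (namely $\CF$) it is precisely the tensor product theorem of \cite{BCR2}, the same place you invoke the infinite-dimensional Dade's lemma. So the ``tensor-exactness'' you assert is not formal bookkeeping: it \emph{is} the theorem $\CV_G(X\otimes Y)=\CV_G(X)\cap\CV_G(Y)$ in disguise, and a fully honest write-up should name it as such rather than folding it into the description of $F_\alpha$. Alternatively, one can bypass the pointwise argument entirely: from $\CF_{\CT}\otimes M=0$ and the tensor product theorem one gets $\CV_G(\CF_{\CT})\cap V_G(M)=\emptyset$, hence $\CV_G(N\otimes\CF_{\CT})\subseteq V_G(N)\cap\CV_G(\CF_{\CT})\subseteq V_G(M)\cap\CV_G(\CF_{\CT})=\emptyset$, and $N\otimes\CF_{\CT}=0$ follows. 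This streamlines your case analysis at $\pi$-points while making the dependence on \cite{BCR2} explicit.
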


Corresponding to a thick tensor ideal $\CM_\CV$ in $\stmodg$ is a triangle of
idempotent modules in $\Stmodg$ having the form
\[
\xymatrix{
\CS_\CV:& {} \ar[r]  & \CE_\CV \ar[r]^{\sigma_\CV} & k \ar[r]^{\tau_\CV} &
\CF_\CV \ar[r] & {}.
}
\]
See \cite{R} for proofs and details. 
The modules $\CE_\CV$ and $\CF_\CV$ are idempotent in the stable category,
meaning that $\CE_\CV \otimes \CE_\CV \cong \CE_\CV$ and
$\CF_\CV \otimes \CF_\CV \cong \CF_\CV$ in $\Stmodg$, {\it i. e.}
ignoring projective summands. In addition, $\CE_\CV \otimes \CF_\CV \cong 0$
in the stable category. The support variety $\CV_G(\CE_{\CV})$ is the set
of all equivalences classes of $\pi$-points 
corresponding to irreducible closed subsets in $\CV$,
and $\CV_G(\CF_{\CV}) = V_G(k) \setminus \CV_G(\CE_{\CV})$. 

For any finitely generated $kG$ module $X$,
the triangle
\[
\xymatrix{
X \otimes \CS_\CV: & \CE_\CV(X) \ar[r]^{\quad \mu_X} & X
\ar[r]^{\nu_X \quad} & \CF_\CV(X) \ar[r] & {}
}
\]
has a couple of universal properties \cite{R}.
Let $\CM^{\oplus}$ denote the closure of $\CM$ in $\Stmodg$ under
taking arbitrary direct sums. The map $\mu_X$ is universal for maps
from objects in $\CM_\CV^\oplus$ to $X$, meaning that if $Y$ is in
$\CM^{\oplus}_\CV$, then any map $Y \to X$ factors through $\mu_X$. The map
$\nu_X$ is universal for maps from $X$ to $\CM_\CV$-local objects, meaning
objects $Y$ such that $\Homul_{kG}(M, Y) = \{0 \}$ for all $M$ in
$\CM_\CV$. The universal property says that for an $\CM_{\CV}$-local module $Y$,
any map $X \to Y$ factors through $\nu_X$.

In the event that $V$ is a closed subset of $V_G(k)$, let $\CE_V = \CE_\CV$
and $\CF_V = \CF_{\CV}$ where $\CV$ is the collection of all closed subsets
of $V$.

\begin{lemma}\label{lem:Elocal}
Suppose that $V$ is a closed subvariety of $V_G(k)$. Suppose that $L$ is
a $kG$-module such that $U \cap V = \emptyset$ for all $U \in \CV_G(L)$. Then
$\Homul_{kG}(\CE_V, L) = \{0\}$.
\end{lemma}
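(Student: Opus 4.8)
The plan is to exploit the two universal properties of the idempotent triangle $\CS_\CV$ together with the support-variety characterization of the modules $\CE_V$ and $\CF_V$. First I would observe that the hypothesis on $L$ says precisely that $L$ is an $\CM_V$-local object: for any finitely generated $M$ with $V_G(M) \subseteq V$, the module $M \otimes L$ has support $\CV_G(M) \cap \CV_G(L) = \emptyset$, so $M \otimes L$ is projective (i.e.\ zero in $\Stmodg$), and hence $\Homul_{kG}(M, L) \cong \Homul_{kG}(k, M^* \otimes L) = \{0\}$ because $M^* \otimes L$ is also projective. Thus $L$ is $\CM_V$-local in the sense recalled just before the statement.

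Next I would feed this into the universal property of $\nu_k \colon k \to \CF_V$: since $L$ is $\CM_V$-local, the identity map (or rather, the relevant test maps) factor through $\nu_k$, and more to the point, applying $\Homul_{kG}(-, L)$ to the triangle
\[
\xymatrix{
\CE_V \ar[r]^{\sigma_V} & k \ar[r]^{\tau_V} & \CF_V \ar[r] & \Omega^{-1}(\CE_V)
}
\]
gives an exact sequence, and the map $\tau_V^* \colon \Homul_{kG}(\CF_V, L) \to \Homul_{kG}(k, L)$ is an isomorphism because $\nu_k = \tau_V$ is the universal map to $\CM_V$-local objects and $L$ is such an object. Exactness of the long exact sequence obtained by applying $\Homul_{kG}(-,L)$ to the triangle then forces $\sigma_V^* \colon \Homul_{kG}(k,L) \to \Homul_{kG}(\CE_V, L)$ to be the zero map and, combined with the surjectivity coming from the next term $\Homul_{kG}(\Omega^{-1}(\CE_V), L) = \Homul_{kG}(\CE_V, L)[-1]$ being handled the same way, pins down $\Homul_{kG}(\CE_V, L) = \{0\}$.

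The cleanest route, and the one I would actually write, is slightly different: $\CE_V$ lies in $\CM_V^{\oplus}$, the closure of $\CM_V$ under arbitrary direct sums — indeed $\CE_V$ is built as a homotopy colimit of finitely generated modules with support in $V$. Since $\Homul_{kG}(-, L)$ sends arbitrary direct sums to products and sends each finitely generated $M \in \CM_V$ to $0$ (as shown in the first step), and since $L$ being $\CM_V$-local is exactly the statement that $\Homul_{kG}(M', L) = 0$ for all $M'$ in $\CM_V$, a standard argument passing through the telescope construction for $\CE_V$ (using that $\Homul_{kG}(-,L)$ takes the relevant homotopy colimit to the corresponding inverse limit with a $\lim^1$ term, both vanishing) yields $\Homul_{kG}(\CE_V, L) = \{0\}$.

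The main obstacle is the passage from "$\Homul_{kG}(M, L) = 0$ for all finitely generated $M \in \CM_V$" to "$\Homul_{kG}(\CE_V, L) = 0$": $\CE_V$ is infinitely generated and is only a homotopy colimit of objects of $\CM_V$, so one must control the $\lim^1$-term in the Milnor sequence for the telescope. This is where the hypothesis that $U \cap V = \emptyset$ for \emph{every} $U \in \CV_G(L)$ (not merely $\CV_G(L) \cap V = \emptyset$ as a set, which is the same thing, but phrased so that it applies to each irreducible component) does its real work, guaranteeing that every finitely generated subquotient appearing in the telescope kills $L$ uniformly. Alternatively, one invokes directly the universal property stated in the excerpt — that $\mu_X$ is universal for maps \emph{from} $\CM_V^{\oplus}$ — with $X = L$: since $L$ is $\CM_V$-local, $\CE_V(L) = \CE_V \otimes L \cong 0$, and then $\Homul_{kG}(\CE_V, L) \cong \Homul_{kG}(\CE_V, \CE_V \otimes L) = \Homul_{kG}(\CE_V, 0) = \{0\}$, using that $\CE_V \otimes L \cong \CE_V(L)$ by idempotence of $\CE_V$. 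This last line is the shortest complete argument, and I would present it as the proof, with the telescope discussion relegated to a remark.
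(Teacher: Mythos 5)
Your proposal is correct and lands on essentially the same argument as the paper's (very terse) proof: the hypothesis forces $L$ to be $\CM_V$-local, and $\CE_V$ lies in the localizing subcategory $\CM_V^\oplus$ generated by $\CM_V$ (being a homotopy colimit of finitely generated modules with variety in $V$), so $\Homul_{kG}(\CE_V, L) = 0$. One small correction to your middle paragraph: the worry about the $\lim^1$ term in the Milnor sequence is a red herring. Once you know $\Homul_{kG}(M_i, L) = 0$ and $\Homul_{kG}(\Omega M_i, L) = 0$ for every term $M_i$ in the telescope, the $\lim^1$ of a system of zero groups is zero, so there is nothing to control; equivalently, the class $\{X : \Homul_{kG}(X, L) = 0\}$ is a localizing subcategory containing $\CM_V$, hence contains $\CM_V^\oplus \ni \CE_V$, and no limit computation is needed at all.
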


\begin{proof}
The point is that $\CE_V$ can be constructed as the direct limit of
finitely generated modules having variety equal to $V$. So $L$ is
$\CM_V$-local.
\end{proof}

Suppose that $\CM = \CM_{\CV}$ is a thick tensor ideal of $\stmodg$ for an
appropriate collection $\CV$. The Verdier localization
$\CC = \CC_{\CV}$ of $\stmodg$
with respect to $\CM$ is the category whose objects are the same as
those of $\stmodg$. The collection of
morphisms from an object $M$ to an object $N$ is obtained by inverting
any morphism with the property that the third object in the triangle of
that morphism is in $\CM$. Thus, objects in $\CM$ are equal to the zero
object in $\CC$. One of the motivations for this work is that
$\Endul(\CF_V)$ is isomorphic to the ring of endomorpisms of the
trivial module $k$ in the localized category $\CC$.

\begin{prop} \label{prop:nointersect}
Suppose that $V = V_1 \cup V_2$ where $V_1$ and $V_2$ are closed
subvarieties such that $V_1 \cap V_2 = \emptyset$. Then $\CF_V$
is the pushout of the diagram
\[
\xymatrix{
k \ar[r]^{\tau_{V_1}} \ar[d]^{\tau_{V_2}} & \CF_{V_1} \ar[d] \\
\CF_{V_2} \ar[r] & \CF_{V}
}
\]
That is, $\CF_V \cong (\CF_{V_1} \oplus \CF_{V_2})/ N$ where
$N = \{ (\tau_{V_1}(a), -\tau_{V_2}(a)) \ \vert \ a \in k \}.$
\end{prop}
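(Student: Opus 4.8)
The plan is to show that the module $P:=(\CF_{V_1}\oplus\CF_{V_2})/N$, equipped with the map $\rho\colon k\to P$ equal to either of the composites $k\xra{\tau_{V_i}}\CF_{V_i}\to P$, satisfies the two properties that pin down the localization map $\tau_V\colon k\to\CF_V$ up to isomorphism: $P$ is $\CM_V$-local, and the cofiber of $\rho$ lies in $\CM_V^\oplus$ (the localizing subcategory generated by $\CM_V$; see \cite{R}). Once both hold, $P\cong\CF_V$ compatibly with the maps out of $k$, which is the assertion (the pushout square in the statement commutes by construction). In the degenerate case where some $\tau_{V_i}$ vanishes, $k$ is a stable summand of $\CE_{V_i}$, so $V_i=V_G(k)$ and hence $V_{3-i}=\emptyset$, and the claim is immediate; otherwise $a\mapsto(\tau_{V_1}(a),-\tau_{V_2}(a))$ is injective, giving a short exact sequence $0\to k\to\CF_{V_1}\oplus\CF_{V_2}\to P\to 0$ and a corresponding triangle in $\Stmodg$.

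For the cofiber of $\rho$: the canonical map $h\colon\CF_{V_1}\to P$ is injective with cokernel $\Coker(\tau_{V_2})\cong\Omega^{-1}\CE_{V_2}$, so there is a triangle $\CF_{V_1}\to P\to\Omega^{-1}\CE_{V_2}\to\Omega^{-1}\CF_{V_1}$. Since $\rho=h\circ\tau_{V_1}$, applying the octahedral axiom to this composite produces a triangle $\Omega^{-1}\CE_{V_1}\to\cone(\rho)\to\Omega^{-1}\CE_{V_2}\to\Omega^{-2}\CE_{V_1}$. Each $\CE_{V_i}$ is a direct limit of finitely generated modules with support contained in $V_i\subseteq V$, hence lies in $\CM_V^\oplus$, and therefore so does $\cone(\rho)$.

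For the locality of $P$: let $M$ be finitely generated with $V_G(M)\subseteq V=V_1\cup V_2$. By the connectedness theorem for support varieties together with $V_1\cap V_2=\emptyset$, we may write $M\cong M_1\oplus M_2$ with $V_G(M_i)\subseteq V_i$, so it suffices to see $\Homul_{kG}(M_i,P)=0$. The essential point is a vanishing statement: if $N$ is finitely generated with $V_G(N)\cap V_j=\emptyset$, then for every finitely generated $D$ with $V_G(D)\subseteq V_j$ the module $N^*\otimes D$ is finitely generated with empty support variety, hence projective; passing to the direct limit defining $\CE_{V_j}$ gives $N^*\otimes\CE_{V_j}\cong 0$ in $\Stmodg$, and therefore $\Homul_{kG}(N,\Omega^n\CE_{V_j})\cong\Homul_{kG}(k,\Omega^n(N^*\otimes\CE_{V_j}))=0$ for all $n$. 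Fed into the triangle $\CE_{V_j}\to k\xra{\tau_{V_j}}\CF_{V_j}\to\Omega^{-1}\CE_{V_j}$, this shows that $(\tau_{V_j})_*\colon\Homul_{kG}(N,\Omega^n k)\to\Homul_{kG}(N,\Omega^n\CF_{V_j})$ is an isomorphism for all $n$. Taking $N=M_1$, $j=2$, and combining with $\Homul_{kG}(M_1,\Omega^n\CF_{V_1})=0$ (locality of $\CF_{V_1}$, since $\Omega^nM_1$ is supported in $V_1$), we conclude that $k\to\CF_{V_1}\oplus\CF_{V_2}$ induces an isomorphism on $\Homul_{kG}(M_1,\Omega^n(-))$ in every degree; the long exact sequence of the triangle $k\to\CF_{V_1}\oplus\CF_{V_2}\to P\to\Omega^{-1}k$ then forces $\Homul_{kG}(M_1,P)=0$. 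The symmetric argument with $N=M_2$, $j=1$ gives $\Homul_{kG}(M_2,P)=0$, so $P$ is $\CM_V$-local, and $P\cong\CF_V$ follows from the usual closure properties of $\CM_V^\oplus$ and of the $\CM_V$-local objects (both closed under triangles; an object in both is zero).

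I expect the locality of $P$ to be the main obstacle. A pushout of $\CM_V$-local objects need not be $\CM_V$-local in general, so the hypothesis $V_1\cap V_2=\emptyset$ must be used in an essential way; here it enters precisely through the vanishing $N^*\otimes\CE_{V_j}\cong 0$, which itself rests on the classical fact that a finitely generated module with empty support variety is projective. A secondary point needing care is the exact characterization of $\tau_V$ and the interplay of $\CM_V^\oplus$ with the $\CM_V$-local objects, from which the final identification is extracted.
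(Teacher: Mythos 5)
Your proof is correct and reaches the conclusion by a route that is close to but not identical with the paper's. The paper works on the $\CE$-side: using the connectedness theorem to split every $M\in\CM_V$ as $M_1\oplus M_2$, it identifies $\CE_V\cong\CE_{V_1}\oplus\CE_{V_2}$ by checking the universal property of $\sigma_V$, verifies idempotence via the projectivity of $\CE_{V_1}\otimes\CE_{V_2}$ (since the supports are disjoint), and then reads off $\CF_V$ as the cone, which is visibly the pushout. You instead work on the $\CF$-side, directly confirming the two defining properties of $\tau_V\colon k\to\CF_V$: that the cofiber lies in $\CM_V^\oplus$ (your octahedral argument identifying it as an extension of $\Omega^{-1}\CE_{V_1}$ by $\Omega^{-1}\CE_{V_2}$) and that the pushout $P$ is $\CM_V$-local (your long-exact-sequence bookkeeping, using locality of $\CF_{V_1}$ and the vanishing $M_1^*\otimes\CE_{V_2}\cong 0$). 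The essential geometric input is the same in both---disjointness of $V_1$ and $V_2$ forcing certain tensor products to be projective, plus the splitting of modules supported on disconnected varieties---but the paper's route through $\CE_V$ is shorter because identifying $\CE_{V_1}\oplus\CE_{V_2}$ with $\CE_V$ immediately yields the pushout as the third vertex of the idempotent triangle, whereas your approach has to establish locality of $P$ by hand. On the other hand your proof spells out exactly where the disjointness hypothesis enters on the local side, which the paper leaves implicit in the phrase ``it also satisfies the desired universal property.''
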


\begin{proof}
The thing to note is that $\CE_{V} \cong \CE_{V_1} \oplus \CE_{V_2}.$
That is, because, $V_1 \cap V_2 = \emptyset$, 
if $M \in \CM_V$, then $M \cong M_1 \oplus M_2$ where
$M_i \in \CM_{V_i}$ for $i = 1,2$. So in particular, the map
$\CE_{V_1} \oplus \CE_{V_2} \to k$ sending $(u, v)$ to
$\sigma_{V_1}(u) + \sigma_{V_2}(v)$ has the desired universal
property. The third object in the triangle of the map is the pushout,
and it also satisfies the desired universal property. Moreover,
we know that $\CE_{V_1} \otimes \CE_{V_2}$ is projective because the
varieties of the two modules are disjoint \cite{BCR2}.
So $\CE_{V_1} \oplus \CE_{V_2}$
is an idempotent module. This is sufficient to prove the proposition.
\end{proof}

\begin{lemma} \label{lem:restr-idem} 
Let $G$ be an elementary abelian group of order $p^r$.
Suppose that $H$ is a subgroup of $G$ or that $kH$ is the image of a 
flat map $\gamma:k[t_1, \dots, t_s]/(t_1^p, \dots, t_s^p) \to kG$.
Let $V$ be a closed subvariety of $V_G(k)$.
Then the restriction of the exact triangle $\CS_V$ to $kH$ is the triangle
\[
\xymatrix{
\CS_{V^\prime}:& {} \ar[r]  & \CE_{V^\prime} \ar[r]^{\sigma_{V^\prime}} 
& k \ar[r]^{\tau_{V^\prime}} & \CF_{V^\prime} \ar[r] & {},
}
\]
where $V^\prime = (\res_{G,H}^*)^{-1}(V)$, the inverse image of $V$ 
under the restriction map.
\end{lemma}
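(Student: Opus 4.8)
The plan is to recognize $\res_{G,H}\CS_V$ as the idempotent (Bousfield) triangle attached in $\stmod(kH)$ to the thick tensor ideal $\CM_{V'}$, and then to invoke the uniqueness of such triangles. First note that $V' = (\res_{G,H}^*)^{-1}(V)$ is a closed subvariety of $V_H(k)$, since $\res_{G,H}^*$ is continuous, so $\CS_{V'}$ is defined. Restriction of scalars $\res_{G,H}\colon \Stmod(kG)\to\Stmod(kH)$ is a triangulated tensor functor that preserves arbitrary coproducts — hence direct limits — and carries $k$ to $k$; applying it to $\CS_V$ produces a triangle
\[
\res_{G,H}\CE_V \longrightarrow k \longrightarrow \res_{G,H}\CF_V \longrightarrow \Omega^{-1}\res_{G,H}\CE_V
\]
in $\Stmod(kH)$ whose outer terms are again idempotent (because $\res_{G,H}$ is monoidal) with $\res_{G,H}\CE_V\otimes\res_{G,H}\CF_V\cong 0$. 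By the uniqueness of the localization triangle attached to a localizing subcategory (see \cite{R}), it suffices to prove: (a) $\res_{G,H}\CE_V$ lies in $\operatorname{Loc}(\CM_{V'})$, the localizing subcategory of $\Stmod(kH)$ generated by $\CM_{V'}$; and (b) $\res_{G,H}\CF_V$ is $\CM_{V'}$-local, i.e.\ $\Homul_{kH}(N,\res_{G,H}\CF_V)=0$ for every finitely generated $kH$-module $N$ with $\CV_H(N)\subseteq V'$.

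For (a) I would use the construction of $\CE_V$ as a direct limit $\CE_V\cong\varinjlim M_i$ of finitely generated $kG$-modules with $\CV_G(M_i)\subseteq V$, as in the proof of Lemma~\ref{lem:Elocal}. Since $\res_{G,H}$ commutes with direct limits, $\res_{G,H}\CE_V\cong\varinjlim\res_{G,H}M_i$. The behaviour of support varieties under restriction gives $\CV_H(\res_{G,H}M_i)=(\res_{G,H}^*)^{-1}(\CV_G(M_i))\subseteq(\res_{G,H}^*)^{-1}(V)=V'$ --- for $H$ a genuine subgroup this is the Avrunin--Scott theorem \cite{CTVZ}, and for $kH$ the image of a flat map $\gamma$ it follows from the $\pi$-point description of Theorem~\ref{thm:rankvar}, composing $\pi$-points of $H$ with $\gamma$ and using that restriction along $\gamma\alpha_K$ equals restriction to $kH$ followed by restriction along $\alpha_K$. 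Hence each $\res_{G,H}M_i$ lies in $\CM_{V'}$, and a direct limit of objects of $\CM_{V'}$ lies in $\operatorname{Loc}(\CM_{V'})$.

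For (b) I would invoke Frobenius reciprocity. Since $kG$ is free of finite rank over $kH$ (as $H$ is a subgroup, or as $\gamma$ is flat), the induction functor $\Ind_H^G=kG\otimes_{kH}(-)$ is exact, preserves projectives, and is left adjoint to $\res_{G,H}$; as both functors preserve projective modules this adjunction descends to the stable categories, so $\Homul_{kH}(N,\res_{G,H}\CF_V)\cong\Homul_{kG}(\Ind_H^G N,\CF_V)$. Given $N$ finitely generated with $\CV_H(N)\subseteq V'$, the support formula for induction, $\CV_G(\Ind_H^G N)=\res_{G,H}^*(\CV_H(N))$ (classical for subgroups \cite{CTVZ}, and for a flat subalgebra deducible from the internal tensor decomposition $kG=kH\otimes B$ of Lemma~\ref{lem:complement}, under which $\Ind_H^G N\cong N\otimes B$), yields $\CV_G(\Ind_H^G N)\subseteq\res_{G,H}^*\bigl((\res_{G,H}^*)^{-1}(V)\bigr)\subseteq V$. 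Thus $\Ind_H^G N\in\CM_V$, and since $\CF_V$ is $\CM_V$-local the group $\Homul_{kG}(\Ind_H^G N,\CF_V)$ vanishes.

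With (a) and (b) established, the displayed triangle is the localization triangle for $\CM_{V'}$: the map $k\to\res_{G,H}\CF_V$ is universal for morphisms from $k$ to $\CM_{V'}$-local objects, and $\res_{G,H}\CE_V\to k$ is universal for morphisms to $k$ from objects of $\operatorname{Loc}(\CM_{V'})$, so by uniqueness it is isomorphic to $\CS_{V'}$, giving $\res_{G,H}\CE_V\cong\CE_{V'}$, $\res_{G,H}\CF_V\cong\CF_{V'}$ and identifying the structure maps. I expect the main obstacle to be the flat-subalgebra case: one must set up the induction functor and Frobenius reciprocity for the non-group subalgebra $kH$ and verify the two support-variety identities there, either via the $\pi$-point machinery or by reducing to the subgroup situation through the decomposition $kG=kH\otimes B$ of Lemma~\ref{lem:complement}. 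Everything used in the honest-subgroup case is standard.
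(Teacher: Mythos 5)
Your proof is correct and fills in exactly the details that the paper leaves to the reader (the paper's entire proof is ``a straightforward matter checking that the varieties are correct''). Recognizing the restricted triangle via its universal properties---$\res_{G,H}\CE_V$ lies in the localizing subcategory generated by $\CM_{V'}$ and $\res_{G,H}\CF_V$ is $\CM_{V'}$-local---and invoking uniqueness from \cite{R} is the natural way to carry out that check, and your use of the Avrunin--Scott formula for (a) and of Frobenius reciprocity plus the induction support formula for (b) is the right mechanism, including the reduction of the flat-subalgebra case to the $\pi$-point picture or the tensor decomposition $kG\cong kH\otimes B$.
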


\begin{proof}
The proof is a straightforward matter checking that the varieties 
are correct. 
\end{proof} 

In the case of $G$ an elementary abelian groups,
one of the main theorem in \cite{Ctriv} is the following.

\begin{thm} \label{thm:decomp}
Suppose that $G$ is an elementary abelian $p$-group having $p$-rank 
at least 3. Suppose that $V$ is a subvariety of $V_G(k)$ consisting 
of a single closed point. Let $kH$ be the image of a
flat map $\gamma:k[t_1, \dots, t_{r-1}]/(t_1^p, \dots, t_{r-1}^p) \to kG$
with the property that $V$ is not in $\res_{G,H}^*(V_H(k))$. 
Suppose that $Z = \alpha(t)$ where $\alpha:k[t]/(t^p) \to kG$
is a $\pi$-point whose equivalence class is the point in $V$. 
Then, the idempotent module $\CF_V$ has a
decomposition (as a direct sum of $kH$-modules)
\[
\CF_V = k \oplus P_0^{p-1} \oplus P_1
\oplus P_2^{p-1} \oplus P_3 \oplus \dots
\]
where
\[
\xymatrix{
\dots \ar[r] & P_2 \ar[r]^\partial & P_1 \ar[r]^\partial &
P_0 \ar[r]^\varepsilon & k \ar[r] &0
}
\]
is a projective $kH$-resolution of the trivial $kH$-module.
Multiplication by the element $Z$ is zero on the summand $k$.
For \ $m \in P_{2i-1}$, $i >0$,
\[
Zm \  = \ -(\partial(m), 0, \dots, 0) \in P_{2i-2}^{p-1}.
\]
For $m = (m_1, \dots, m_{p-1}) \in P_{2i}^{p-1}$,
\[
Zm = \begin{cases} - \varepsilon(m_{p-1}) + (0, m_1, \dots, m_{p-2})
\in k \oplus P_0^{p-1} & \text{ if } i = 0, \\
-\partial(m_{p-1}) + (0, m_1, \dots, m_{p-2}) \in 
P_{2i-1} \oplus P_{2i}^{p-1} &
\text{ if } i > 0.  \end{cases}
\]
The map $\tau_V: k \to \CF_V$ has image the summand $k$ in the
decomposition. Moreover, $\Hom_{kG}(k, \CF_V) = \sum_{i\geq 0}
H_i$ is a graded ring with
\[
H_i \cong  \begin{cases} k\tau_V(1) & \text{ for } i = 0, \\
\Hom_{kH}(k, P_i) & \text{ for } i >0 \end{cases}
\]
A homogeneous element  $\theta: k \to \CF_V$ lifts to a homomorphism
$\hat{\theta}: \CF_V \to \CF_V$ that is induced by a $kH$-chain map
$\theta_*:(P_*, \varepsilon) \to (P_*, \varepsilon)$ of the augmented
projective resolution to itself, that lifts $\theta$.
\end{thm}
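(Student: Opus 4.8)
The plan is to construct $\CF_V$ explicitly as a module over $kG$ using the flat decomposition $kG = kH \otimes k[Z]/(Z^p)$ supplied by Lemma~\ref{lem:complement}, where $Z = \alpha(t)$, and then read off the $kH$-module structure and the ring $\Hom_{kG}(k,\CF_V)$ from that construction. First I would recall the standard construction (see \cite{R}, \cite{Ctriv}) of the idempotent triangle $\CS_V$ associated to a single closed point: one takes a non-nilpotent homogeneous element $\zeta \in \HHH^*(G,k)$ whose zero locus is the complement of a neighborhood of the point, builds the Rickard idempotent $\CE_V$ as a homotopy colimit of the Carlson modules $L_{\zeta^n}$ (the kernels of the maps $\Omega^{n|\zeta|}k \to k$), and sets $\CF_V$ to be the third term of the triangle. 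For the point varieties considered here one can in fact take $Z$ itself: since the equivalence class of $\alpha$ is the point of $V$, the module $kG/(Z)$ (equivalently $kH$ regarded as a $kG$-module via $\alpha$) together with the chain of maps $kG/(Z^n) \to kG/(Z^{n-1})$ computes the relevant colimit. So I would set $\CF_V = \varinjlim \Omega^{-1}\!\bigl(kG/(Z^n)\bigr)$ with the evident transition maps, and verify — using that $Z$ is a $\pi$-point representing the point of $V$, and that $V \notin \res_{G,H}^*(V_H(k))$ so that $kG/(Z)$ restricted to $kH$ is free — that this object fits into the triangle $\CS_V$ with the correct support varieties via Lemma~\ref{lem:restr-idem}.

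The heart of the argument is the explicit bookkeeping. Over $kH$, $kG$ is free, so $kG/(Z^n) \cong (kH)^n$ as $kH$-modules, and the colimit telescope can be resolved: replacing each $kG/(Z^n)$ by a projective $kG$-resolution built from the tensor product of a $kH$-resolution $(P_*,\varepsilon)$ of $k$ with the standard periodic resolution of $k$ over $k[Z]/(Z^p)$ (whose differentials alternate between multiplication by $Z$ and by $Z^{p-1}$), one obtains after passing to the limit and splitting off projective summands exactly the stated decomposition $\CF_V = k \oplus P_0^{p-1} \oplus P_1 \oplus P_2^{p-1} \oplus \cdots$. The formulas for multiplication by $Z$ are then just the differentials in that tensor-product (double) complex: on a $(p-1)$-fold block $P_{2i}^{p-1}$ one gets the cyclic shift $(m_1,\dots,m_{p-1}) \mapsto (0,m_1,\dots,m_{p-2})$ coming from the $Z$-differential on the $k[Z]/(Z^p)$ factor, plus the boundary term $-\partial(m_{p-1})$ (or $-\varepsilon(m_{p-1})$ when $i=0$) landing in $P_{2i-1}$ (or in the summand $k$); on $P_{2i-1}$ one gets $m \mapsto -(\partial(m),0,\dots,0)$ from the $Z^{p-1}$-differential. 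That $\tau_V$ hits precisely the summand $k$ is immediate from tracking the unit through the colimit. Then $\Hom_{kG}(k,\CF_V)$: a map $k \to \CF_V$ must land in the $Z$-fixed points, and from the displayed $Z$-action the fixed points of the $kH$-projective part are exactly the "bottom" copies, giving $H_i \cong \Hom_{kH}(k,P_i)$ for $i>0$ together with $k\tau_V(1)$ in degree $0$; the grading and ring structure come from the internal grading of the resolution. Finally, lifting a homogeneous $\theta: k\to\CF_V$ to $\hat\theta:\CF_V\to\CF_V$ is the assertion that $\theta$, viewed via the above identification as an element of $\Hom_{kH}(k,P_i)$, is realized by a $kH$-chain map $\theta_* : (P_*,\varepsilon)\to(P_*,\varepsilon)$ lifting it — standard homological algebra over $kH$ — and that the induced endomorphism of the tensor-product complex, hence of $\CF_V$, is a $kG$-map lifting $\theta$; one checks it commutes with the $Z$-action because $\theta_*$ acts only on the $P_*$-factor while $Z$ acts only on the $k[Z]/(Z^p)$-factor.

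I expect the main obstacle to be not any single calculation but the careful identification of the limit object: one must check that passing from the sequential homotopy colimit $\varinjlim \Omega^{-1}(kG/(Z^n))$ to the concrete infinitely generated module displayed in the theorem is legitimate — i.e., that the naive term-by-term limit of the tensor-product resolutions really represents $\CF_V$ in $\Stmodg$ (no spurious projective summands are lost or gained, and the colimit commutes with the relevant constructions), and that the hypothesis $V \notin \res_{G,H}^*(V_H(k))$ is exactly what makes the restriction to $kH$ split off a copy of $k$ rather than something larger. This is where \cite{Ctriv}'s analysis of the elementary abelian case is used essentially; the present statement is that same analysis carried out with $kH$ (the image of a flat map) in place of a genuine subgroup, which Lemma~\ref{lem:complement} and Lemma~\ref{lem:restr-idem} make routine, so the extension is mostly a matter of checking that every step of the original proof only used the tensor-factor structure $kG = kH \otimes k[Z]/(Z^p)$ and not that $kH$ came from a subgroup.
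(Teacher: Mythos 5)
Your proposal is mathematically sound in outline, but it takes a more laborious route than the paper's own proof. You propose to re-derive the explicit decomposition of $\CF_V$ from scratch: build it as a homotopy colimit, resolve each term by a tensor-product complex of a $kH$-resolution with the periodic resolution over $k[Z]/(Z^p)$, read off the $Z$-action and the $kH$-module structure from the differentials, and then verify that every step of the corresponding argument in \cite{Ctriv} uses only the algebra isomorphism $kG \cong kH \otimes k[Z]/(Z^p)$ and never that $kH$ is an honest group algebra of a subgroup.

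The paper instead makes a one-line formal reduction. Having established (exactly as you do, via the flat-map condition and Lemma~\ref{lem:complement}) that $kG \cong kC \otimes kH$ as a $k$-algebra with $kC = k[Z]/(Z^p)$, the paper observes that this isomorphism need not preserve the Hopf algebra structure, but that, by \cite[Remark~7.5]{Ctriv}, the idempotent modules $\CE_V$ and $\CF_V$ depend only on the algebra structure and not on the comultiplication. Thus one may change generators so that $kH$ genuinely \emph{is} the group algebra of a complementary elementary abelian subgroup, making $G$ literally a direct product $C \times H$, and then quote \cite[Theorem~6.2]{Ctriv} verbatim with $V$ the point $[1,0,\dots,0]$. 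This gets all the formulas for free. Your last paragraph correctly identifies the one issue ("the extension is mostly a matter of checking that every step of the original proof only used the tensor-factor structure"), but the coalgebra-independence remark collapses that step-by-step check into a single invocation, which is what the paper's proof buys. Your route is self-contained and would serve a reader who does not wish to accept \cite[Theorem~6.2]{Ctriv} as a black box; the paper's is shorter and cleanly separates the new content (the flat-subalgebra hypothesis) from the old.

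One smaller caution: your explicit proposal $\CF_V = \varinjlim \Omega^{-1}(kG/(Z^n))$ is a plausible guess but should be checked against Rickard's actual construction; the telescope that produces $\CE_V$ is built from Carlson modules $L_{\zeta^n}$ for $\zeta$ a cohomology class detecting $V$, and translating this into quotients by powers of $Z$ requires an argument. This is not a gap in principle, but it is precisely the kind of bookkeeping the paper avoids by citing.
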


\begin{proof}
Let $kC$ denote the image of $\alpha$. Because $\gamma$ is a flat 
map, the classes modulo
$\Rad^2(kG)$ of $\gamma(t_1), \dots, \gamma(t_{r-1})$ span a subspace
of $\Rad(kG)/\Rad^2(kG)$ of dimension $r-1$. The fact that $V$ is not in 
$\res_{G,H}^*(V_H(k))$ implies that the class modulo $\Rad^2(kG)$ of 
$Z$ is not in that subspace. That is, otherwise there would be a 
a $\pi$-point equivalent to $\alpha$ that factored through $\gamma$
violating the assumption on the varieties. Thus we have that 
$kG \cong kC \otimes kH$, is the group algebra of the direct 
product $C\times H$. Now we apply \cite[Theorem 6.2]{Ctriv},
where $V$ the point $[1, 0, \dots, 0]$ in $V_G(k)$ corresponding to $Z$. 
This gives the stated result.
That is, for this specific choice of $V$ and generators
$Z, \alpha(t_1), \dots, \alpha(t_{r-1}),$ 
the module $\CF_V$ has a decomposition as
described in \cite{Ctriv}. 
\end{proof}

We remark that changing the generators of $kG$, as we have done above,
does not preserve the Hopf algebra structure. However, as noted in
\cite[Remark 7.5]{Ctriv}, the structure of the idempotent modules does
not depend on the coalgebra structure.

\begin{thm} \label{thm:deeprad}
Assume the hypothesis of the previous theorem (Thm. \ref{thm:decomp}).
Suppose that $X = \alpha(t)$ where $\alpha:k[t]/(t^p) \to kG$ is a 
$\pi$-point not corresponding to the point $V$.
Assume that $\theta: k \to \CF_V$ is a homomorphism with 
the property that $\theta(1) \in X^{p-1}\CF_V$. Then,
the image of $\hat{\theta}: \CF_V \to \CF_V$ is contained in
$X^{p-1}\CF_V$.
\end{thm}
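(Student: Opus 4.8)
The plan is to run the construction of Theorem \ref{thm:decomp} with a subalgebra $kH$ adapted to $X$, read off $X^{p-1}\CF_V$ from the resulting $kH$-decomposition, and then show that the hypothesis $\theta(1)\in X^{p-1}\CF_V$ forces the chain map defining $\hat\theta$ to be divisible by $X^{p-1}$.

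First I would arrange that $X\in kH$. The $\pi$-point producing $X$ has equivalence class different from $V$, so the images of $X$ and $Z$ in $\Rad(kG)/\Rad^2(kG)$ are linearly independent (otherwise $X$ would be equivalent to the $\pi$-point defining $V$); by Lemma \ref{lem:complement} we may extend $X$ and $Z$ to a flat system generating $kG$, and we take $kH$ to be the maximal flat subalgebra of that system not involving $Z$. Then $Z$ does not factor through $kH$, so $V\notin\res^*_{G,H}(V_H(k))$ and the hypotheses of Theorem \ref{thm:decomp} are met by this $kH$, which contains $X$; I then prove the statement for the resulting lift $\hat\theta$. I would also note that the component of $X^{p-1}\CF_V$ in the summand $k$ of the decomposition is $0$ because $X$ acts as zero on the trivial module; hence the hypothesis $\theta(1)\in X^{p-1}\CF_V$ kills the degree-zero part of $\theta$, and by linearity of the construction we may assume $\theta$ homogeneous of some positive degree $n$, so that $\theta(1)$ lies in (the socle of) a summand $P_n$.

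Because $X\in kH$, multiplication by $X^{p-1}$ respects the $kH$-decomposition of $\CF_V$, and on each free summand $P_i\cong kH^{d_i}$ one has $X^{p-1}P_i=\ker(X\colon P_i\to P_i)$ (a direct computation, $kH$ being free over $k[X]/(X^p)$). Thus
\[
X^{p-1}\CF_V \;=\; 0\;\oplus\;(X^{p-1}P_0)^{p-1}\;\oplus\;X^{p-1}P_1\;\oplus\;(X^{p-1}P_2)^{p-1}\;\oplus\;\cdots,
\]
and, being the image of the $kG$-linear map "multiply by $X^{p-1}$", it is a $kG$-submodule, hence stable under the $Z$-action of Theorem \ref{thm:decomp}. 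Since $\hat\theta$ is $kH$-linear and $X^{p-1}\CF_V$ is a $kH$-submodule, it suffices to show that $\hat\theta$ carries each $kH$-summand of $\CF_V$ into $X^{p-1}\CF_V$: for the summand $k$ this is exactly the hypothesis, while on a summand $P_i$ the map $\hat\theta$ is given by the corresponding component $\theta_i$ of the chain map $\theta_*$ together with correction terms built from the $Z$-action (which, once $\theta_*$ is handled, stay inside the $kG$-submodule $X^{p-1}\CF_V$). So everything comes down to showing that each $\theta_i$ has image in $X^{p-1}P_{i'}$; equivalently, using $X^{p-1}P_{i'}=\ker(X|_{P_{i'}})$, that $X\theta_i=0$, i.e.\ that $\theta_*$ is divisible by $X^{p-1}$ as a self-map of $(P_*,\varepsilon)$.

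This divisibility is the heart of the matter and the step I expect to be the main obstacle. The base case is the hypothesis, together with the observation (from the formulas of Theorem \ref{thm:decomp}) that $\tau_V(1)$ lies in the image of multiplication by $Z$ on $\CF_V$, which already forces the bottom component of $\theta_*$ into $X^{p-1}P_n$. For the inductive step one uses the chain-map identity $\partial\theta_i=\theta_{i-1}\partial$: if $\Im\theta_{i-1}\subseteq X^{p-1}P_{i-1}$ then $\partial(\Im\theta_i)\subseteq X^{p-1}P_{i-1}$, so $\Im\theta_i\subseteq\partial^{-1}(X^{p-1}P_{i-1})$. The trouble is that $\partial$ is not injective, so a priori this only gives $\Im\theta_i\subseteq X^{p-1}P_i+\ker\partial_i$; removing the $\ker\partial_i$-part so as to get $\Im\theta_i\subseteq X^{p-1}P_i$ on the nose requires using the minimality of the resolution $(P_*,\varepsilon)$ together with the precise interlacing of the $P_j$'s and the summand $k$ under the $Z$-action in $\CF_V$. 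It is essential here that we work with the concrete $\hat\theta$ of Theorem \ref{thm:decomp} rather than up to homotopy, since the conclusion concerns the literal image of $\hat\theta$. Once $\theta_* = X^{p-1}\psi_*$ for a chain map $\psi_*$ is established, the rest — checking the $Z$-correction terms in $\hat\theta$ also land in $X^{p-1}\CF_V$, and the index bookkeeping in $\Hom_{kG}(k,\CF_V)=\sum_i H_i$ — is routine.
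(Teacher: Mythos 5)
Your framing --- choose $kH$ containing $X$, observe that $X^{p-1}\CF_V$ is a $kG$-submodule, reduce to homogeneous $\theta$, and then try to show the lifting chain map $\theta_*$ is divisible by $X^{p-1}$ --- identifies the right target, but the inductive divisibility argument that you yourself flag as ``the heart of the matter'' has a gap that cannot be closed within your framework. The lift $\hat\theta$ of $\theta$, and hence the chain map $\theta_*$, is not unique: at the chain level two lifts of $\theta$ differ by a null-homotopy $\partial h + h\partial$, whose degree-$i$ component $\partial h_i + h_{i-1}\partial$ need not land in $X^{p-1}P_*$ even when all the $\theta_{i-1}$ do. The identity $\partial\theta_i = \theta_{i-1}\partial$ therefore determines $\theta_i$ only up to this ambiguity, which (for a minimal resolution, where $\ker\partial_i = \Im\partial_{i+1}$) is precisely the $\ker\partial_i$-part you observe you cannot remove. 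Minimality and the $Z$-interlacing cannot remove it, because the claimed divisibility is simply false for an arbitrary lift: the theorem is really an existence statement, to be proved by \emph{constructing} a suitable lift, not by verifying a property of a generic one. Theorem \ref{thm:decomp} asserts only that some lifting chain map exists; it provides no canonical choice for your induction to grip.

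The paper supplies exactly the missing construction, via a tensor decomposition your plan does not use. After arranging $X = Y_1$ among the generators of $kH$, it factors $kH \cong kC \otimes kJ$ with $kC = k[X]/(X^p)$, takes $P_*$ to be the tensor product $R_* \otimes Q_*$ of minimal resolutions over $kC$ and $kJ$ (so $R_i\cong kC$), and reduces by additivity to $\theta(1) = X^{p-1}\otimes u \in X^{p-1}R_m \otimes Q_{n-m}$. It then defines $\theta_i$ componentwise as (up to the evident projection and inclusion) $\mu\otimes\varphi_i$, where $\mu\colon R_0 \to X^{p-1}R_m$ sends $1\mapsto X^{p-1}$ and $\varphi_*$ is a $kJ$-chain map lifting $1\mapsto u$. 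The image lands in $X^{p-1}R_m\otimes Q_*\subseteq X^{p-1}P_*$ by construction, and no divisibility lemma is needed; one only checks that $\{\theta_i\}$ is a chain map lifting $\theta$. The one-variable factor $R_*\cong kC$, which makes the map $\mu$ available, is the ingredient your proposal lacks.
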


\begin{proof}
In the statement of Theorem \ref{thm:decomp}, the generators 
$Y_i = \alpha(t_i)$ of $kH$
can be chosen so that $Y_1 = X$ and $Y_2, \dots, Y_{r-1}$ are any elements
so that the classes modulo $\Rad^2(kG)$ of $Z, X, Y_2, \dots, Y_{r-1}$ 
form a basis for $\Rad(kG)/\Rad^2(kG)$.
Thus, $kH \cong kC \otimes kJ$ where
$kC \cong k[X]/(X^p)$ is the flat subalgebra of $kG$ generated by
$X$ and $kJ$ is the flat subalgebra  generated by $Y_2, \dots, Y_{r-1}.$
Let $(R_*, \varepsilon_1)$ be a minimal projective $kC$-resolution of $k_C$ and
$(Q_*, \varepsilon_2)$,  minimal projective $kJ$-resolution of $k_J$.
Then $R_i \cong kC$ for all $i \geq 0$. The minimal $kH$-resolution
of $k$ can be taken to be the tensor product of these two, so that
\[
P_n = \sum_{i= 0}^n R_i \otimes Q_{n-i}
\]

In the decomposition of $\CF_V$ given in Theorem \ref{thm:decomp}, the
element $\theta(1) \in X^{p-1}\CF_V \subseteq \sum_{n \geq 0} P_n$. Because
an assignment of chain maps to elements of $\Hom_{kG}(k, \CF_V)$ is
additive, it is sufficient to prove the theorem assuming that
$\theta(1) \in X^{p-1}P_n$ for some $n$. Indeed, we may assume that
$\theta(1) \in X^{p-1}(R_m \otimes Q_{n-m}) =
(X^{p-1}R_m) \otimes Q_{n-m})$ for some $m$ and $n$.
Because $R_m \cong kC$, we have that $\theta(1) = X^{p-1} \otimes u$ 
for some $u \in Q_{n-m}$.

Let $\varphi: k \to Q_{n-m}$ be given by $\varphi(1) = u$. Then
$\varphi$ lifts to a chain map
\[
\xymatrix{
\dots \ar[r] & Q_1  \ar[r] \ar[d]^{\varphi_1}  &
Q_0 \ar[r] \ar[d]^{\varphi_0} & 
k \ar[d]^{\varphi} \ar[r] & 0\\
\dots \ar[r] & \quad Q_{n-m+2} \ar[r] & \quad Q_{n-m+1} \ar[r] & 
\quad Q_{n-m} \ar[r] & \dots
}
\]

Let $\mu: R_0 \to X^{p-1}R_m$ be given by $\mu(1) = X^{p-1}$.
Now define $\theta_i: P_i \to P_{n+i+1}$ as the composition
\[
\xymatrix{
P_i \ar[r] & R_0 \otimes Q_i \ar[r] &
R_0/(X^{p-1}R_0) \otimes Q_i \ar[r]^{ \ \ \mu \otimes \varphi_i} &
X^{p-1}R_m \otimes Q_{n-m+i+1} \ar[r] & P_{n+i+1}
}
\]
The first map is projection onto the direct summand $R_0 \otimes Q_i$.
The second is the natural quotient. Then comes the chain map, 
and the fourth is the inclusion.

The task to finish the proof amounts to two straightforward exercises which
we leave to the reader. The first is to show that $\{\theta_i\}$ is
a chain map, and the second is to show that it lifts the map $\theta$.
\end{proof}

\begin{cor} \label{cor:restrict}
Assume the hypotheses and notation
of Theorems \ref{thm:decomp} and \ref{thm:deeprad}.
Let $\CI$ be the collection of all $\theta:k \to \CF_V$ such that
$\theta(1) \in X^{p-1}\CF_V$. Then under the correspondence
$\Homul_{kG}(k, \CF_V) \cong \Homul_{kG}(\CF_V, \CF_V)$, $\CI$ is
the kernel of the restriction map
$\Homul_{kG}(\CF_V, \CF_V) \to \Homul_{kH}(\CF_V, \CF_V)$.
In particular, $\CI$ is an ideal.
\end{cor}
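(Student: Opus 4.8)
The plan is to decode the statement into purely module-theoretic terms and then read off each claim from Theorems~\ref{thm:decomp} and~\ref{thm:deeprad}. Recall that the isomorphism $\Homul_{kG}(k,\CF_V) \cong \Homul_{kG}(\CF_V,\CF_V)$ sends a map $\theta \colon k \to \CF_V$ to a lift $\hat\theta \colon \CF_V \to \CF_V$; such a lift exists and is unique in the stable category because $\CF_V$ is idempotent (concretely, $\hat\theta = (\theta \otimes \CF_V)$ followed by the multiplication isomorphism $\CF_V \otimes \CF_V \cong \CF_V$, which on the other side also gives $\hat\theta \circ \tau_V = \theta$). Under this identification, the restriction map $\Homul_{kG}(\CF_V,\CF_V) \to \Homul_{kH}(\CF_V,\CF_V)$ becomes a ring homomorphism, and the first thing I would pin down is its target. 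By Lemma~\ref{lem:restr-idem}, the restriction of $\CF_V$ to $kH$ is $\CF_{V'}$ where $V' = (\res_{G,H}^*)^{-1}(V)$; since by hypothesis $V \notin \res_{G,H}^*(V_H(k))$, we get $V' = \emptyset$, hence $\CF_{V'} = k$ and $\CE_{V'} = 0$ in $\stmodh$. So the restriction map is really a ring homomorphism $\Homul_{kG}(\CF_V,\CF_V) \to \Homul_{kH}(k,k) = k$.

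Next I would identify the kernel. An element $\theta$ restricts to $0$ in $\Homul_{kH}(k,k) = k$ precisely when the composite $\hat\theta|_{kH} \colon \CF_V|_{kH} \to \CF_V|_{kH}$ is zero, equivalently when $\theta|_{kH} \colon k \to \CF_V|_{kH}$ is stably trivial as a map of $kH$-modules. Using the explicit $kH$-decomposition $\CF_V = k \oplus P_0^{p-1} \oplus P_1 \oplus \cdots$ of Theorem~\ref{thm:decomp}, the only summand contributing a nonzero stable map $k \to \CF_V|_{kH}$ is the summand $k$ (all the $P_i$ are projective over $kH$, so maps into them are stably trivial). Thus $\theta \mapsto$ the $k$-summand component of $\theta(1)$; more precisely one checks that $\theta|_{kH}$ is stably trivial iff the image of $\theta(1)$ in the summand $k$ under the projection $\CF_V \to k$ is $0$ iff $\theta(1)$ lies in $P_0^{p-1} \oplus P_1 \oplus \cdots$. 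Here is where Theorem~\ref{thm:deeprad} enters: with $X = \alpha(t)$ for a $\pi$-point $\alpha$ not corresponding to $V$, the condition ``$\theta(1) \in X^{p-1}\CF_V$'' is exactly the condition that the $k$-summand component of $\theta(1)$ vanishes, since multiplication by $X$ (and hence $X^{p-1}$) kills the summand $k$ — indeed by Theorem~\ref{thm:decomp} multiplication by $Z$ is zero on the summand $k$, and the analogous computation for any non-special $\pi$-point element $X$ shows $X^{p-1}\CF_V \subseteq P_0^{p-1} \oplus P_1 \oplus \cdots$, while conversely every element of $P_0^{p-1} \oplus P_1 \oplus\cdots$ that underlies a chain map representing a stable homomorphism lands in $X^{p-1}$ of that module because the $P_i$ are free $k[X]/(X^p)$-modules (being tensor factors $kC \otimes kJ$). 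So $\CI$ is exactly the set of $\theta$ with $\theta(1) \in X^{p-1}\CF_V$, which is the kernel of the restriction map, hence an ideal.

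The remaining point — that $\CI$ is closed under the ring structure in a way visible from Theorem~\ref{thm:deeprad} — is handled by noting that the kernel of a ring homomorphism is automatically a two-sided ideal, so ``$\CI$ is an ideal'' follows formally once the identification of $\CI$ with the kernel is established; Theorem~\ref{thm:deeprad} is precisely what guarantees that the \emph{module-level} description of $\CI$ via $X^{p-1}\CF_V$ is stable under composition with arbitrary $\hat\psi$, which is reassuring but not logically needed once we know it's a kernel. The main obstacle I anticipate is the bookkeeping in the middle step: carefully verifying that ``$\theta(1) \in X^{p-1}\CF_V$'' matches ``$\theta|_{kH}$ stably trivial'' requires knowing that a chain map $\theta_* \colon (P_*,\varepsilon) \to (P_*,\varepsilon)$ representing $\theta$ has image in $X^{p-1}$ of the resolution iff the induced map on $\CF_V$ does, together with the observation that the decomposition of Theorem~\ref{thm:decomp} is as $kH$-modules but $X \in kC \subseteq kH$, so multiplication by $X^{p-1}$ respects the summand structure and acts as $X^{p-1}$ on each free $kC$-factor $R_m$. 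Once one is comfortable that these two descriptions of $\CI$ coincide, the corollary is immediate.
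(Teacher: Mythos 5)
Your argument is correct in substance and reaches the conclusion by a somewhat different route than the paper. The paper's proof is a one-direction argument: it observes $X^{p-1}\CF_V\subset P_*$, invokes Theorem~\ref{thm:deeprad} to get $\hat\theta(\CF_V)\subseteq X^{p-1}\CF_V\subset P_*$, and concludes that $\hat\theta$ factors through the $kH$-free module $P_*$ and so restricts to zero; the reverse inclusion $\ker\subseteq\CI$ is left implicit. You instead compute the kernel directly: you identify $(\CF_V)_{\downarrow H}\cong k$ in $\stmod(kH)$ so the target of restriction is $\Endul_{kH}(k)\cong k$, note that the kernel consists exactly of those $\theta$ whose $k$-component vanishes (equivalently $\theta(1)\in P_*$), and then match this with the condition $\theta(1)\in X^{p-1}\CF_V$. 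This gives both inclusions and is arguably the more complete argument; it also shows you do not actually need Theorem~\ref{thm:deeprad} for this corollary, only the decomposition of Theorem~\ref{thm:decomp}.

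Two points deserve tightening. First, your step ``multiplication by $X$ kills the summand $k$ --- indeed by Theorem~\ref{thm:decomp} multiplication by $Z$ is zero on the summand $k$, and the analogous computation for $X$\dots'' conflates the roles of $X$ and $Z$: the decomposition in Theorem~\ref{thm:decomp} is a $kH$-module decomposition and $X\in kH$, so $X$ acts diagonally and annihilates the trivial summand $k$ for free; no analogue of the $Z$-computation is needed. Second, the converse direction (``every element of $P_*$ that underlies a stable homomorphism lands in $X^{p-1}P_*$'') is the place where precision matters, and your phrasing obscures the key fact. What makes it work is that $\theta$ is a $kG$-homomorphism, so $\theta(1)\in(\CF_V)^G=\Soc_{kG}(\CF_V)$, in particular $X\theta(1)=0$; since $P_*$ is free over $kC=k[X]/(X^p)$, the elements of $P_*$ annihilated by $X$ are exactly $X^{p-1}P_*$, whence $\theta(1)\in P_*$ forces $\theta(1)\in X^{p-1}P_*=X^{p-1}\CF_V$. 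Once that is stated, the two descriptions of $\CI$ coincide and the corollary follows, including the ``ideal'' statement, since the kernel of a ring homomorphism is an ideal.
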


\begin{proof}
The point is, in the notation of the last proof, that $X^{p-1}\CF_V
\subset P_*$. Thus, $\theta(1)$ and $\hat{\theta}(\CF_V)$ are in
$P_*$ which is free as a $kH$-module. Hence, the map $\theta$
factors through a $kH$-projective object and is zero on restriction
to $kH$.
\end{proof}

\begin{rem} \label{rem:indep}
We emphasize that in Theorem \ref{thm:decomp}, the choices of the flat map
$\gamma$ and also of the generators for $kH$ are arbitrary except that 
$kH$ should have rank $r-1$ and the condition on the varieties must be
satisfied. Similarly, in Theorem \ref{thm:deeprad},
any $\pi$-point $\alpha$ satisfying the 
desired conditions can be chosen. 
\end{rem}

\section{Endomorphisms of $\CF_V$}
Throughout this section, assume that $G = \langle g_1, \dots, g_r \rangle$
is an elementary abelian group of order $p^r$ for $r \geq 3$. We show
that if $V \subset V_G(k)$ is a closed subvariety of dimension $0$, then
the endomorphism ring of the idempotent module $\CF_V$ in the stable category
has a unique maximal ideal that is nilpotent and has codimension one.
We assume the notation of the previous section.

We prove the following result in more generality than is actually needed
in the later development.

\begin{prop} \label{prop:disjointV}
Suppose that $V_1$ and $V_2$ are disjoint subvarieties of $V_G(k)$. 
Let $kH$ be the image of a flat map 
$\gamma:k[t_1, \dots, t_s]/(t_1^p, \dots, t_s^p) \to kG$, for $s \geq 2$.
Let $X = \gamma(t_1)$ and let $kJ$ be the flat subalgebra of 
$kG$ generated by $\gamma(t_2), \dots, \gamma(t_s)$. 
Assume that we have the following two conditions.
\begin{enumerate}
\item $V_1 \subseteq  \res_{G,J}^*(V_J(k)).$
\item $V_2 \cap \res_{G,H}^*(V_H(k)) = \emptyset$.
\end{enumerate}
Suppose that $\varphi: k \to \CF_2 = \CF_{V_2}$ is a homomorphism 
such that $\varphi(1) \in X^{p-1}\CF_2$. Then $\varphi$ extends to a 
homomorphism $\psi: \CF_1 = \CF_{V_1} \to X^{p-1}\CF_2$. That is, we have a 
commutative diagram 
\[
\xymatrix{
k \ar[r]^{\tau_1} \ar[d]_{\varphi}  & \CF_1 \ar[dl]_\psi \ar[d] \\
X^{p-1}\CF_2 \ar[r]^{ \quad \iota}  & \CF_2
}
\]
where $\iota$ is the inclusion.
\end{prop}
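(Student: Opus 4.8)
The plan is to exploit the universal property of the map $\tau_1 \colon k \to \CF_1$ from Section 3: since $\CF_1$ is the cofiber of $\sigma_{V_1}\colon \CE_1 \to k$, to extend $\varphi$ across $\tau_1$ it suffices to show that the composite $\varphi \circ \sigma_{V_1}\colon \CE_1 \to X^{p-1}\CF_2$ is zero in $\stmod$ (equivalently, that $\iota\varphi\sigma_{V_1}\colon \CE_1 \to \CF_2$ is zero and that the lift can be taken into $X^{p-1}\CF_2$). Actually the cleanest route is first to produce the extension $\bar\psi\colon \CF_1 \to \CF_2$ of $\iota\varphi$, and then to upgrade it to a map into $X^{p-1}\CF_2$ using the explicit description of $\CF_2$ as a $kH$-module. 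So the first step is: show $\Homul_{kG}(\CE_1, \CF_2) = 0$. By hypothesis (2), $V_2 \cap \res^*_{G,H}(V_H(k)) = \emptyset$; and by hypothesis (1), the support variety of $\CE_1$ lies in $V_1 \subseteq \res^*_{G,J}(V_J(k)) \subseteq \res^*_{G,H}(V_H(k))$. Hence $\CV_G(\CE_1)$ is disjoint from $\CV_G(\CF_2) = V_G(k)\setminus \CV_G(\CE_2)$-complement issues aside, what matters is that $\CE_1$ is built from finitely generated modules supported in $V_1$, which is disjoint from $\CV_G(\CF_2)$; by Lemma~\ref{lem:Elocal} (with the roles of the relevant varieties swapped, i.e.\ $\CF_2$ is $\CM_{V_1}$-local since $\CV_G(\CF_2)$ meets $V_1$ trivially) we get $\Homul_{kG}(\CE_1, \CF_2) = 0$. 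Applying $\Homul_{kG}(-,\CF_2)$ to the triangle $\CE_1 \to k \to \CF_1 \to$ then shows $\tau_1^*\colon \Homul_{kG}(\CF_1,\CF_2) \to \Homul_{kG}(k,\CF_2)$ is surjective, so $\iota\varphi$ extends to some $\bar\psi\colon \CF_1 \to \CF_2$.

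The second step is to arrange that the image of (a choice of) $\bar\psi$ actually lands in $X^{p-1}\CF_2$. Here I would invoke the machinery of Theorems~\ref{thm:decomp} and~\ref{thm:deeprad}, applied to $V_2$ with the $\pi$-point $X = \gamma(t_1)$ (legitimate since, by hypothesis (2), $V_2 \notin \res^*_{G,H}(V_H(k))$, so in particular $X$ does not correspond to the point(s) of $V_2$ and a flat complement $kH'$ of rank $r-1$ through which $V_2$-avoiding data factors can be chosen). In that decomposition $X^{p-1}\CF_2 \subseteq P_*$ is a free (hence projective) $kH$-module summand-wise, and Theorem~\ref{thm:deeprad} says precisely that a homomorphism $k \to \CF_2$ hitting $X^{p-1}\CF_2$ lifts to $\hat\varphi\colon \CF_2 \to \CF_2$ with image in $X^{p-1}\CF_2$. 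The point is that $\CF_1$ is a module in the localized category $\CC_{V_1}$, and we want to push the extension problem through the known structure of $\CF_2$. Concretely: compose $\bar\psi$ with the structure so that one reduces the extension of $\varphi$ to extending the "coordinate" map $\varphi' \colon k \to Q_{n-m}$ of Theorem~\ref{thm:deeprad} across $\tau_1$; since the $Q_*$ are built from the flat subalgebra $kJ$ and $V_1 \subseteq \res^*_{G,J}(V_J(k))$ (hypothesis (1)), the obstruction to such an extension is governed by $kJ$-local data and vanishes by the same Lemma~\ref{lem:Elocal} argument, now over $kJ$. Feeding the extended coordinate chain map back through the explicit formulas $\theta_i \colon P_i \to P_{n+i+1}$ of Theorem~\ref{thm:deeprad} produces $\psi\colon \CF_1 \to X^{p-1}\CF_2$ with $\psi\circ\tau_1 = \varphi$, and the triangle commutes because $\iota\psi\tau_1 = \iota\varphi$ and $\iota\psi$ is one of the extensions constructed in step one.

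The main obstacle I expect is the interaction between the two filtrations/decompositions — keeping $\CF_1$ "abstract" (only its universal property and support are known, via hypothesis (1) on $kJ$) while $\CF_2$ is "concrete" (the Theorem~\ref{thm:decomp} $kH$-decomposition) — and in particular checking that the chain-map formulas of Theorem~\ref{thm:deeprad} remain valid when the source map $\varphi$ is extended over $\tau_1$ rather than being a genuine $\pi$-point datum. One must verify that the bookkeeping of the $R_* \otimes Q_*$ tensor resolution is compatible with restricting the whole picture to $kJ$, and that "image in $X^{p-1}\CF_2$" is preserved under the extension; this is essentially Theorem~\ref{thm:deeprad} re-run relative to $kJ \subseteq kH$, and I would spend the bulk of the argument making that reduction precise. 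The disjointness $V_1 \cap V_2 = \emptyset$ enters exactly to guarantee that all the relevant $\Hom$-groups between the $V_1$-supported and $V_2$-local objects vanish, so no genuine extension obstruction survives.
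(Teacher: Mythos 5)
Your Step 1 contains a genuine error: the claim $\Homul_{kG}(\CE_1, \CF_2) = 0$ is false, and the variety argument you give for it is backwards. By definition $\CV_G(\CF_2)$ is (roughly) $V_G(k)\setminus V_2$, and since $V_1 \cap V_2 = \emptyset$ we actually have $V_1 \subseteq V_G(k)\setminus V_2$, so $\CV_G(\CE_1)$ lies \emph{inside} $\CV_G(\CF_2)$ rather than being disjoint from it. Your sentence ``$\CF_2$ is $\CM_{V_1}$-local since $\CV_G(\CF_2)$ meets $V_1$ trivially'' is exactly the wrong inclusion --- $\CF_2$ is $\CM_{V_2}$-local, not $\CM_{V_1}$-local. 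So you cannot extend $\iota\varphi$ over $\tau_1$ by this route; the composite $\CE_1 \to k \to \CF_2$ is \emph{not} automatically zero. (You may have conflated $\CF_2$ with $\CE_2$, whose variety is $V_2$ and would be disjoint from $V_1$.)

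The insight you are missing is that one should aim directly at the module $X^{p-1}\CF_2$, not at $\CF_2$ and then try to upgrade. The point is that $X^{p-1}\CF_2$ has a much smaller variety than $\CF_2$, and \emph{this} is what the hypotheses control. By Condition (2), $(\CF_2)_{\downarrow H} \cong k \oplus P$ with $P$ free over $kH = kC \otimes kJ$ (where $kC$ is the image of $X$), and then $X^{p-1}\CF_2 = X^{p-1}P$ is free as a $kJ$-module, so $\CV_G(X^{p-1}\CF_2) \cap \res^*_{G,J}(V_J(k)) = \emptyset$. By Condition (1), $\CV_G(\CE_1) \subseteq V_1 \subseteq \res^*_{G,J}(V_J(k))$. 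Hence $\CV_G(\CE_1) \cap \CV_G(X^{p-1}\CF_2) = \emptyset$, and Lemma~\ref{lem:Elocal} shows that the composite $\CE_1 \xra{\sigma_1} k \xra{\varphi} X^{p-1}\CF_2$ vanishes stably. The extension $\psi \colon \CF_1 \to X^{p-1}\CF_2$ then comes from the distinguished triangle in one step; there is no need for the second ``upgrading'' step nor for Theorems~\ref{thm:decomp} and \ref{thm:deeprad} (whose application here you correctly flagged as problematic, since they treat endomorphisms of a single $\CF_V$ rather than maps out of $\CF_1$). This is precisely how Conditions (1) and (2) interact: (2) places $X^{p-1}\CF_2$ off the $kJ$-hyperplane while (1) places $\CE_1$ on it.
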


\begin{proof}
By Condition (2), $(\CE_2)_{\downarrow H}$ is free as a $kH$-module. This 
implies that $(\CF_2)_{\downarrow H} \cong k \oplus P$, where $P$ is a 
free $kH$-module. Thus, $\Homul_{kH}(k, (\CF_2)_{\downarrow H})$ has 
dimension one, and the fact that $\varphi(1) \in X^{p-1}\CF_2$ means that 
$\varphi$ factors through a projective $kH$-module, namely $P$. This 
follows because $\varphi(1) \in \Rad(kH)\CF_2 \cap \Soc(\CF_2) \subset P$.

Note that $X \not\in kJ$, and hence the restriction of  
$\CE_1$ to the subalgebra generated by $X$ is a free module. Moreover, 
$(X^{p-1}\CF_2)_{\downarrow H} = X^{p-1}P$ is free as a $kJ$-module. 
Consequently, $\CV_G(\CE_1) \cap \CV_G(X^{p-1}\CF_2) = \emptyset$, and
the composition 
\[ 
\xymatrix{ 
\CE_1 \ar[r]^{\sigma_1} & k \ar[r]^{\varphi \qquad}  & X^{p-1}\CF_2
}
\]
is the zero map in the stable category 
by Lemma \ref{lem:Elocal}. The existence of the map $\psi$ is implied
from the distinguished triangle. 
\end{proof}

\begin{cor}  \label{cor:extend}
Suppose that $G$ is an elementary abelian $p$-group having rank
at least 3. Suppose that $V_1, V_2 \subset V_G(k)$ are closed subvarieties
each consisting of a single point. Let 
$\beta: k[t_1,t_2]/(t_1^p,t_2^p) \to kG$ be flat map such that the 
following hold. For notation, let $kH$ be the image of $\beta$. 
\begin{enumerate}
\item The class of the $\pi$-point $\alpha:k[t]/(t^p) \to kG$ 
with $\alpha(t) = \beta(t_1)$ is in $V_1$. 
\item $V_2 \not\subset \res_{G,H}^*(V_H(k))$. 
\end{enumerate}
Let $X = \beta(t_2)$. Suppose that $\varphi: k \to \CF_2 
= \CF_{V_2}$ is a map such that $\varphi(1)\in X^{p-1}\CF_2$. 
Then $\varphi$ extends to a map $\theta: \CF_1 \to \CF_2$ such that 
$\theta(\CF_1) \subseteq X^{p-1}\CF_2$.  
\end{cor}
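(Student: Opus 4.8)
The plan is to deduce Corollary~\ref{cor:extend} from Proposition~\ref{prop:disjointV} by specializing to the case $s = 2$. The two hypotheses of the corollary are designed precisely so that the two conditions of the proposition hold, once we set up notation correctly. First I would set $kJ$ to be the flat subalgebra of $kG$ generated by $\beta(t_1)$, so that $kH \cong kC \otimes kJ$ where $kC$ is generated by $X = \beta(t_2)$; note that $\alpha(t) = \beta(t_1)$ generates $kJ$, so the $\pi$-point $\alpha$ factors through $kJ$. By hypothesis (1), the class of $\alpha$ lies in $V_1$, which gives $V_1 \subseteq \res_{G,J}^*(V_J(k))$ — this is Condition~(1) of Proposition~\ref{prop:disjointV}. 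Hypothesis (2) of the corollary, that $V_2 \not\subset \res_{G,H}^*(V_H(k))$, together with the fact that $V_2$ is a single point, gives $V_2 \cap \res_{G,H}^*(V_H(k)) = \emptyset$ — this is Condition~(2) of the proposition. (Here one uses that $V_2$ is a single closed point, so ``not contained in'' is the same as ``disjoint from'' a closed set.)

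Second, I would check the remaining hypothesis of Proposition~\ref{prop:disjointV}, namely that $V_1$ and $V_2$ are disjoint. This follows because $V_2 \cap \res_{G,H}^*(V_H(k)) = \emptyset$ while $V_1 \subseteq \res_{G,H}^*(V_H(k))$ (since $V_1 \subseteq \res_{G,J}^*(V_J(k)) \subseteq \res_{G,H}^*(V_H(k))$, as $kJ \subseteq kH$). Hence $V_1 \cap V_2 = \emptyset$, so $\CF_1 = \CF_{V_1}$ and $\CF_2 = \CF_{V_2}$ are the idempotent modules attached to disjoint varieties, exactly as required.

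Third, with $X = \beta(t_2) = \gamma(t_1)$ in the notation of the proposition (so one should match the indexing: take $\gamma(t_1) = X$ and $\gamma(t_2) = \beta(t_1)$, which is allowed since the labeling of the generators of $kH$ is arbitrary by Remark~\ref{rem:indep}), the map $\varphi: k \to \CF_2$ with $\varphi(1) \in X^{p-1}\CF_2$ is precisely the input to Proposition~\ref{prop:disjointV}. Applying that proposition produces a homomorphism $\psi : \CF_1 \to X^{p-1}\CF_2$ extending $\varphi$, and composing with the inclusion $\iota : X^{p-1}\CF_2 \hookrightarrow \CF_2$ gives the desired $\theta = \iota \circ \psi : \CF_1 \to \CF_2$ with $\theta(\CF_1) \subseteq X^{p-1}\CF_2$.

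The only real point requiring care — and the one place where I would slow down — is the bookkeeping that translates the corollary's hypotheses into the proposition's conditions: matching which generator of $kH$ plays the role of $X = \gamma(t_1)$ versus which spans $kJ$, and verifying that ``$V_2$ is a single point not contained in a closed set'' upgrades to ``$V_2$ is disjoint from that closed set.'' Everything else is a direct citation. I expect no genuine obstacle beyond this indexing translation.
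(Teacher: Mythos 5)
Your proof is correct and takes the same approach as the paper, which also deduces the corollary from Proposition~\ref{prop:disjointV} by setting $kJ$ to be the image of $\alpha$; the paper's proof is merely more terse, leaving the index relabeling, the verification of the two conditions, and the disjointness of $V_1$ and $V_2$ implicit.
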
 

\begin{proof}
Let $kJ$ be the image of $\alpha$. Then the conditions of 
Proposition \ref{prop:disjointV} are satisfied and the corollary follows.
\end{proof}

We can now prove the main theorem of the section. 

\begin{thm} \label{thm:elemain}
Suppose that $G$ is an elementary abelian $p$-group having rank $r \geq 3$.
Let $V \subset V_G(k)$ be a closed subset consisting of a finite
number of closed points. Let $kH$ be the image of a flat map 
$\gamma:k[t_1, \dots, t_s]/(t_1^p, \dots, t_s^p) \to kG$ 
such that $s \geq 2$ and $\res_{G, H}^*(V_H(k)) \cap V = \emptyset$. Let 
$\CI$ be the kernel of the restriction $\Endul_{kG}(\CF_V) \to 
\Endul_{kH}(\CF_V)$. Then $\CI$ is an ideal of codimension one in 
 $\Endul_{kG}(\CF_V)$, and $\CI^2 = \{0\}$. Thus $\CI$ is the unique 
maximal ideal in $\Endul_{kG}(\CF_V)$.  
\end{thm}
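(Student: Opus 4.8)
The plan is to reduce the finite-union case to the single-point case already handled in Section 3, and to exploit the freedom (Remark \ref{rem:indep}) to choose, for each point of $V$, a convenient flat map and generators. First I would decompose $V = \{v_1, \dots, v_n\}$ into its individual points and use Proposition \ref{prop:nointersect} iteratively to express $\CF_V$ as an iterated pushout of the $\CF_{v_j}$ along $\tau_{v_j}: k \to \CF_{v_j}$, so that every $kH$-endomorphism of $\CF_V$ restricted to any single coordinate agrees on the summand $k$. In particular $\Endul_{kG}(\CF_V)$ sits inside $\prod_j \Endul_{kG}(\CF_{v_j})$, and the map $\End_{\CC_V}(k) \to \prod_j \End_{\CC_{v_j}}(k)$ is the diagonal-type comparison coming from $\nu_{v_j}$. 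The key structural input for each factor is Theorem \ref{thm:decomp}: for each $v_j$ there is a flat map $\gamma_j$ of rank $r-1$ avoiding $v_j$, relative to which $\CF_{v_j}$ decomposes, and a $\pi$-point $\alpha_j$ with $\alpha_j(t) = Z_j$ realizing $v_j$.

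Next I would identify $\CI$. By Corollary \ref{cor:restrict} applied at each point — but here I must instead use the given flat map $kH$ with $\res_{G,H}^*(V_H(k)) \cap V = \emptyset$ — the restriction $\Endul_{kG}(\CF_V) \to \Endul_{kH}(\CF_V)$ kills exactly those $\theta$ whose image lands in a $kH$-free submodule. Using $\res_{G,H}^*(V_H(k)) \cap V = \emptyset$, the restriction $(\CE_V)_{\downarrow H}$ is $kH$-free, so $(\CF_V)_{\downarrow H} \cong k \oplus (\text{free})$ and $\Homul_{kH}(\CF_V, \CF_V)$ is just $k$ acting via the identity on the $k$-summand. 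Hence the restriction map $\Endul_{kG}(\CF_V) \to \Endul_{kH}(\CF_V) \cong k$ is the augmentation picking out the scalar by which $\theta$ acts on $\tau_V(1)$, and $\CI$ is its kernel — visibly an ideal of codimension one, with $\Endul_{kG}(\CF_V) = k\cdot \id \oplus \CI$. So it remains only to prove $\CI^2 = 0$.

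To show $\CI^2 = 0$, I would argue pointwise. Fix $\theta, \psi \in \CI$. For each $v_j$, I want to show the composite $\theta\psi$ acts as zero on the $\CF_{v_j}$-component. Here is where Corollary \ref{cor:extend} and Theorem \ref{thm:deeprad} enter. Since $s \ge 2$ and $r \ge 3$, for each point $v_j$ I can find a flat map $\beta_j: k[t_1,t_2]/(t_1^p,t_2^p) \to kG$ with $\alpha(t) = \beta_j(t_1)$ a $\pi$-point lying in $v_j$, and with $X_j := \beta_j(t_2)$ a $\pi$-point whose class lies \emph{outside} $\res_{G,H}^*(V_H(k))$ but not equal to $v_j$ either (this uses $r\ge 3$: there is enough room in $\Rad(kG)/\Rad^2(kG)$). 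The crucial claim is that $\psi \in \CI$ forces $\psi(\CF_{v_j}) \subseteq X_j^{p-1}\CF_{v_j}$. This should follow because $\psi$ is zero on restriction to $kH$, hence its image on the $k$-summand of $(\CF_{v_j})_{\downarrow H}$ is zero, which by the explicit $Z$-action and $X$-action formulas in Theorem \ref{thm:decomp} (with $Y_1 = X_j$) pins the image of $\psi$ into the "deep radical" layer $X_j^{p-1}\CF_{v_j}$; then Theorem \ref{thm:deeprad} says $\hat\psi(\CF_{v_j}) \subseteq X_j^{p-1}\CF_{v_j}$. Finally, $\theta \in \CI$ means $\theta(1) \in X_j^{p-1}\CF_{v_j}$ as well (same argument), and since $X_j^{p-1}\CF_{v_j}$ lies in the $kH'$-free part $P_*$ for the relevant rank-$(r-1)$ flat subalgebra, composing $\hat\theta \circ \hat\psi$ lands in $X_j^{p-1}(X_j^{p-1}\CF_{v_j}) = X_j^{2(p-1)}\CF_{v_j}$; but $X_j^p = 0$ and $2(p-1) \ge p$ for $p \ge 2$, so this is zero. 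Assembling over all $j$ via the pushout description gives $\theta\psi = 0$ in $\Endul_{kG}(\CF_V)$, so $\CI^2 = 0$. Since $\CI$ has codimension one and is nilpotent, any element outside $\CI$ is a unit, so $\CI$ is the unique maximal ideal.

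The main obstacle I anticipate is the bookkeeping in the reduction step: making the pointwise argument compatible across the pushout decomposition of $\CF_V$, i.e. verifying that "acts as zero on each $\CF_{v_j}$-component" genuinely implies "is zero in $\Endul_{kG}(\CF_V)$", and that the $X_j$ can be chosen simultaneously away from the single fixed $kH$ for \emph{all} points $v_j$ — this is exactly where the hypothesis $s \ge 2$ together with $r \ge 3$ is needed, and where one must be careful that $X_j$ avoids both $\res_{G,H}^*(V_H(k))$ and the point $v_j$ itself. The rest is an application of the already-established Theorems \ref{thm:decomp}, \ref{thm:deeprad} and Corollary \ref{cor:extend}.
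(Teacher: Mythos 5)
Your proof has the right overall shape — reduce to single points via the pushout, characterize $\CI$ as the elements whose image lies in a ``deep radical'' layer, lift such a map to one landing in that layer, and square to get zero — but there is a genuine gap in the key step, and it stems from choosing $X_j$ on the wrong side of $kH$.

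You choose $X_j := \beta_j(t_2)$ to be a $\pi$-point whose class lies \emph{outside} $\res_{G,H}^*(V_H(k))$, and then claim that $\psi \in \CI$ forces $\psi(1) \in X_j^{p-1}\CF_{v_j}$. That implication is unsupported and, in fact, does not hold. Knowing that $\psi$ restricts to zero on $kH$ tells you only that $\psi(1)$ lies in the $kH$-free direct summand of $(\CF_V)_{\downarrow H} \cong k \oplus P$, i.e.\ in $\Soc_{kH}(P)$. That constrains $\psi(1)$ to be divisible by $Y^{p-1}$ for $Y \in kH$, but gives no divisibility by $X_j^{p-1}$ when $X_j \notin kH$. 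In other words, Corollary~\ref{cor:restrict}'s identification $\CI = \{\varphi : \varphi(1) \in X^{p-1}\CF_V\}$ is valid precisely because its $X$ is a generator of $kH$; your $X_j$ sits outside $kH$ by construction, so the identification does not transfer, and neither the hypotheses of Theorem~\ref{thm:deeprad} nor those of Corollary~\ref{cor:extend} are verified for your $X_j$.

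The paper makes the opposite choice, and that is what makes the argument close. It fixes a \emph{single} $\pi$-point $\alpha$ that factors through $\gamma$ (so $X = \alpha(t) \in kH$), with the additional requirement that $[\alpha]$ avoids every rank-two image $\res_{G,H_{i,j}}^*(V_{H_{i,j}}(k))$, where $kH_{i,j}$ is the flat subalgebra generated by $\pi$-points for $V_i$ and $V_j$ (this is possible because each such intersection with $\res_{G,H}^*(V_H(k))$ is at most a point, and $s \ge 2$ leaves room to dodge finitely many points). With $X \in kH$, the membership $\varphi(1) \in X^{p-1}\CF_V$ really is equivalent to $\varphi \in \CI$; the avoidance condition is exactly what is needed to invoke Corollary~\ref{cor:extend} for each cross-term $i \ne j$ (since $[\alpha]$ not collinear with $V_i$ and $V_j$ is the same as $V_j$ not on the line through $V_i$ and $[\alpha]$), while Theorem~\ref{thm:deeprad} handles $i = j$. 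Because the same $X$ works for every component, each lift $\hat\varphi$ has image in $X^{p-1}\CF_V$, and the composition of any two lands in $X^{2(p-1)}\CF_V = 0$. Your choice of a point-dependent $X_j$ would also wreck this last step, since $X_{j_1}^{p-1}X_{j_2}^{p-1} \ne 0$ for distinct generators. So: keep $X$ inside $kH$, make it avoid the $\res_{G,H_{i,j}}^*(V_{H_{i,j}}(k))$, and use that single $X$ throughout.
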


\begin{proof}
We write $V = \cup_{i=1}^{n} V_i$ where each $V_i$ is a closed point in 
$V_G(k)$. For each $i$, let $\alpha_i:k[t]/(t^p) \to kG$ be a $\pi$-point 
corresponding to the closed point in $V_i$. For $i \neq j$ let $kH_{i,j}$
be the image of the flat map $\beta_{i,j}: k[t_1,t_2]/(t_1^p,t_2^p) \to kG$ 
with $\beta_{i,j}(t_1) = \alpha_i(t)$ and $\beta_{i,j}(t_2) = \alpha_j(t)$. 
We note that for all $i , j$, the intersection 
\[
\res_{G, H_{i,j}}^*(V_{H_{i,j}}(k)) \cap \res_{G, H}^*(V_{H}(k))
\]
either contains only one point or is empty. 
Let $\alpha:k[t]/(t^p) \to kG$ be a $\pi$-point 
that factors through $\gamma$, but is not in 
$\res_{G, H_{i,j}}^*(V_{H_{i,j}}(k))$ for any pair $i,j$ with 
$1 \leq i < j \leq n$. 

By Proposition \ref{prop:nointersect}, the idempotent module $\CF_V$
is the pushout of the system $\{\tau_i: k \to \CF_i$\}, where $\CF_i = 
\CF_{V_i}$. For each $i$, there is a homomorphism $\nu_i: \CF_i \to \CF$, 
such that the compositions $\nu_i\mu_i$ coincide.  
From the conditions on the choice of $kH$, we see that 
for each $i$, the restriction of $\CF_i$ to $kH$ 
has the form $k \oplus P_i$ where $P_i$ is a projective $kH$-module. 
In addition, the component isomorphic to $k$ is generated by $\tau_i(1)$.
Thus, the restriction $(\CF_V)_{\downarrow H} \cong k \oplus \sum P_i$. 
We see that $\CI$ is the subspace of $\Homul_{kG}(k, \CF_V)$
spanned by all $\varphi: k \to \CF_V$ with 
$\varphi(1) \in X^{p-1} \CF_V$.  

For any $j = 1, \dots, t$, let $\CI_j$ be the set of all $\varphi \in \CI$
such that $\varphi(1) \in X^{p-1}P_j$. Thus $\CI$ is the direct sum of 
the subspaces $\CI_j$. 

Throughout the proof, we make the identification 
$\Homul_{kG}(k, \CF_V) \cong \Homul_{kG}(\CF_V, \CF_V)$. If we are 
given two elements $\varphi_1$ and $\varphi_2$ in $\Homul_{kG}(k, \CF_V)$,
their product is obtained by first finding lifts 
$\hat{\varphi_i}: \CF_V \to \CF_V$, for $i = 1,2$, taking the composition and
composing with the map $\tau: k \to \CF_V$. Note that
any lift will serve the purpose. Our aim is to show that if 
$\varphi_1, \varphi_2 \in \CI$, then the product is zero. Without loss
of generality we may assume that $\varphi_i \in \CI_{j_i}$ for some
$1 \leq j_i \leq t$. 

Letting $j = j_1$, there is a $\phi_1: k \to X^{p-1}P_j \subset P_j$
such that $\nu_j\phi_1 = \varphi_1: k \to \CF_V$.
By Corollary \ref{cor:extend}, for any $i \neq j$ there is an extension
$\theta_i: \CF_i \to \nu_j(X^{p-1}\CF_j)$ extending $\phi_1$. Likewise,
by Theorem \ref{thm:deeprad}, there is such an extension also in the 
case that $i = j.$ Thus, for every $i = 1, \dots, t$, there is an 
extension $\hat{\theta}_i: \CF_i \to \CF_V$ of 
$\varphi_1$ with the property that 
$\hat{\theta}_i(\CF_i) \subseteq X^{p-1}\CF_V$. 

The universal property of pushouts, now guarantees that there is a map
$\hat{\varphi_1}: \CF_V \to \CF_V$ such that, for every $i$, the diagram 
\[
\xymatrix{
k \ar[r]^{\phi_1} \ar[d]_{\tau} \ar[dr]^{\varphi_1}  & 
\CF_j \ar[d]^{\nu_j} \\
\CF_V \ar[r]_{\hat{\varphi_1}} & \CF_V
}
\]
commutes and $\hat{\varphi_1}(\CF_V) \subseteq X^{p-1}\CF_V$.
There is a similar extension $\hat{\varphi_2}: \CF_V \to \CF_V$ with the 
same property. 

Now, we see that $\hat{\varphi_2}(\hat{\varphi_1}(\CF_V) \subseteq
\hat{\varphi_2}(X^{p-1}\CF_V) \subseteq X^{2(p-1)}\CF_V = \{0\}$.
Hence, the product of any two elements in $\CI$ is zero.
We notice that $\Endul_{kH}(\CF_V) \cong k$, implying 
that $\CI$ is a maximal ideal. Because $\CI^2 = \{0\}$,
any element of $\Endul_{kG}(\CF_V)$ that is not in $\CI$ is invertible. 
\end{proof}

\section{The general case}
The aim of this section is to extend the conclusion of Theorem 
\ref{thm:elemain} to a more general finite group $G$. 
The arguments in the proofs depend on the fact (see Remark \ref{rem:quillen})
that any prime
ideal in $\hgs$ contains the kernel of a restriction to an elementary
abelian $p$-subgroup of $G$. For this reason, the main results of the section
do not extend to general finite group schemes. Throughout the section
we assume the following. 

\begin{hyp} \label{hyp}
suppose that $G$ is a finite group whose maximal
elementary abelian $p$-subgroups all have $p$-rank at least three.
Let $V$ be a closed subvariety of $V_G(k)$, that
is a union of a finite collection of closed points. 
\end{hyp}

We make the identification 
$\Homul_{kG}(k, \CF_V) \cong \Homul_{kG}(\CF_V, \CF_V)
= \Endul_{kG}(\CF_V)$, as before. 
The ideal, that we are interested in, is the following.  

\begin{defi} \label{def:maxideal}
Assume that \ref{hyp} holds. 
Suppose that $E$ is an elementary abelian $p$-subgroup of $G$ 
with order $p^r$ for $r \geq 3$.
Let $kH$ be the image of a flat map 
$\gamma: k[t_1, \dots, t_{s}]/(t_1^p, \dots, t_{s}^p) \to kE$, 
such that $1 \leq s < r$ and 
\[
\res^*_{G,H}(V_H(k)) \cap V = \emptyset.
\]
Let $\CI \subset \Endul_{kG}(\CF_V)$ be the kernel of the restriction 
map $\Endul_{kG}(\CF_V) \to \Endul_{kH}(\CF_V)$. 
\end{defi}

Given $E$, the existence of $H$ follows easily from the geometry. Note that 
$\CI$ is an ideal, because if a $kG$-homomorphism factors through a 
$kH$-projective module, then so does its composition with any other 
homomorphism. 

\begin{prop} \label{prop:notdepend}
Assume that \ref{hyp} holds. 
The ideal $\CI$ does not depend on the choice of 
$E$ or $H$, as long as the above conditions are satisfied.
\end{prop}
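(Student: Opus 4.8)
The plan is to show that $\CI$ is intrinsically characterized as a subset of $\Endul_{kG}(\CF_V)$, independent of the auxiliary data $E$ and $H$.

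First I would unwind what the elements of $\CI$ look like. Given the flat map $\gamma \colon k[t_1,\dots,t_s]/(t_1^p,\dots,t_s^p) \to kE$ with $\res^*_{G,H}(V_H(k)) \cap V = \emptyset$, I would choose a $\pi$-point $\alpha \colon k[t]/(t^p) \to kE$ whose image lies in a maximal flat subalgebra of $kE$ complementary to $kH$ inside $kE$ — concretely, extend $\gamma(t_1),\dots,\gamma(t_s)$ to generators of a maximal flat subalgebra and throw in one more element $X = \alpha(t)$. Because the maximal elementary abelian subgroup $E$ containing $H$ has $p$-rank at least $3$, there is room to do this with $X$ chosen so that the $\pi$-point class of $\alpha$ is not in $V$ and not in $\res^*_{G,H}(V_H(k))$. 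The point of Corollary \ref{cor:restrict} and the argument inside Theorem \ref{thm:elemain} is that $\CI$ consists precisely of those $\varphi \colon k \to \CF_V$ whose restriction to $kE$ has the property $\varphi(1) \in X^{p-1}\CF_V$ — equivalently, those $\varphi$ that restrict to zero on $kE$ (since $X^{p-1}\CF_V$ lands in the free part of $(\CF_V)_{\downarrow E}$, as in the proof of Corollary \ref{cor:restrict}). So the first reduction is: $\CI$ equals the kernel of the restriction $\Endul_{kG}(\CF_V) \to \Endul_{kE}(\CF_V)$, and in particular $\CI$ depends only on $E$, not on the intermediate $H$.

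Second, I would prove independence of the choice of $E$. Let $E_1, E_2$ be two maximal elementary abelian $p$-subgroups, each of $p$-rank at least $3$, with corresponding ideals $\CI_1, \CI_2$ (kernels of restriction to $kE_1$, $kE_2$). The key fact to invoke is the detection of nilpotence on elementary abelian subgroups \cite{CQ}, together with the structure from Theorem \ref{thm:decomp}: by Theorem \ref{thm:elemain}, each $\CI_j$ is the unique maximal ideal of $\Endul_{kE_j}(\CF_V)$'s preimage — more usefully, each $\CI_j$ is a nil ideal (indeed square-zero), and the quotient $\Endul_{kG}(\CF_V)/\CI_j$ embeds in $\Endul_{kE_j}(\CF_V) \cong k$, so $\CI_j$ has codimension one. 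Thus each $\CI_j$ is \emph{a} maximal ideal of $\Endul_{kG}(\CF_V)$. To see they coincide, I would show $\Endul_{kG}(\CF_V)$ is local: an element $\varphi$ is invertible iff $\hat\varphi \colon \CF_V \to \CF_V$ is a stable isomorphism iff its restriction to every maximal elementary abelian $p$-subgroup $E'$ is a stable isomorphism (this is where one needs detection — the obstruction to $\hat\varphi$ being an isomorphism is a cohomological/nilpotence condition detectable on elementary abelians, combined with the fact that $\CF_V$ is built from modules with support away from $V$). Since $\Endul_{kG}(\CF_V)/\CI_j \cong k$, every $\varphi \notin \CI_j$ is invertible; if $\Endul_{kG}(\CF_V)$ has a unique maximal ideal, then $\CI_1 = \CI_2$ = that ideal.

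The main obstacle I expect is the last point: showing that $\varphi \notin \CI$ forces $\varphi$ invertible \emph{without already knowing the main theorem}, or at least establishing enough of the local structure to conclude $\CI_1 = \CI_2$. The cleanest route is probably to argue directly that $\CI_1 \subseteq \CI_2$ by symmetry: take $\varphi \in \CI_1$, so $\hat\varphi$ restricts to something factoring through a projective on $kE_1$; one wants to show the same on $kE_2$. This is not automatic and will require a geometric argument using that $V$ is a finite set of closed points, Quillen's theorem (Remark \ref{rem:quillen}) to realize the relevant $\pi$-points through elementary abelians, and the explicit chain-map description from Theorem \ref{thm:decomp} to track where the image of $\hat\varphi$ lands. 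Alternatively — and this may be the intended argument — one shows both $\CI_1$ and $\CI_2$ equal the Jacobson radical, or equal the set of non-invertible elements, by first establishing that $\Endul_{kG}(\CF_V)$ has no idempotents other than $0,1$ (which follows from $\CF_V$ being indecomposable, as $\tau_V \colon k \to \CF_V$ generates a copy of $k$ that is the full socle-type summand fixed under restriction) and then noting any proper ideal of codimension one in a connected ring with square-zero maximal part is forced to be the unique maximal ideal. I would pursue this indecomposability-plus-codimension-one argument first, as it sidesteps the delicate invertibility criterion and reduces everything to facts already in hand from Theorem \ref{thm:elemain} and Proposition \ref{prop:nointersect}.
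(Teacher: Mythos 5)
Your opening ``first reduction'' is not correct, and this matters. The ideal $\CI$ is defined as the kernel of restriction to $kH$, not to $kE$, and these two kernels genuinely differ in general: an element of $\CI$ factors through a $kH$-projective, but a $kH$-projective module need not be $kE$-projective. Likewise the claim $\Endul_{kE_j}(\CF_V) \cong k$ is false when $\res^*_{G,E_j}(V_{E_j}(k)) \cap V \neq \emptyset$; in that situation $(\CF_V)_{\downarrow E_j}$ is itself a nontrivial idempotent module $\CF_{V'_j}$ (Lemma \ref{lem:restr-idem}) whose endomorphism ring is infinite-dimensional. Only $(\CF_V)_{\downarrow H} \cong k \oplus (\text{projective})$ is guaranteed by Definition \ref{def:maxideal}, because the disjointness condition is imposed on $H$, not on $E$. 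The correct version of the reduction, and the one the paper uses, is that $\CI$ is the preimage under $\res_{G,E}$ of the \emph{maximal ideal} of $\Endul_{kE}(\CF_V)$, which by Theorem \ref{thm:elemain} is independent of $H$; this gives independence of the choice of $kH$ inside a fixed $kE$, but it does not make restriction to $kE$ itself vanish.

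Your fallback argument via idempotent lifting also has a circularity problem. To lift the idempotent of $R/(\CI_1 \cap \CI_2) \cong k \times k$ you need $\CI_1 \cap \CI_2$ to be nil, and you propose to get this from $\CI_j^2 = 0$. But $\CI^2 = 0$ is a feature of the elementary abelian case (Theorem \ref{thm:elemain}) only; Remark \ref{rem:exm} exhibits a group where $\CI^2 \neq 0$. The general nilpotence statement is Theorem \ref{thm:general}, and its proof needs the well-definedness of $\CI$ (i.e.\ Proposition \ref{prop:notdepend}) to apply the nilpotence-detection theorem of \cite{CQ} across \emph{all} elementary abelian subgroups; a single $E_1$ does not suffice for that theorem. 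So you cannot invoke nilpotence here without establishing the present proposition first.

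The ingredient you are missing is the combinatorial chaining argument of Alperin \cite{Alp}: for a $p$-group, any two elementary abelian $p$-subgroups of rank at least $3$ are joined by a chain $E = F_1, \dots, F_m = E'$ with each $F_i \cap F_{i+1}$ of rank at least $2$. Given two consecutive terms with intersection $F$ of rank $\geq 2$, one chooses a single $\pi$-point $\alpha$ landing in $kF$ whose image $kH$ avoids $(\res^*_{G,F})^{-1}(V)$; then this $kH$ is a flat subalgebra of both $kE_1$ and $kE_2$ simultaneously, and both $\CI_1$ and $\CI_2$ equal the kernel of restriction to this one $kH$. Induction along the chain, plus conjugacy and a Sylow reduction for non-$p$-groups, finishes the proof. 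This direct identification replaces the local-ring/idempotent bootstrap you propose and sidesteps the need for any nilpotence input at this stage.
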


\begin{proof}
The first thing to notice is that the restriction of the module $\CE_V$ to
$kH$ is projective, and hence, $(\CF_V)_{\downarrow H} \cong k \oplus P$ 
where $P$ is a projective module by Lemma \ref{lem:restr-idem}.
Then, the independence of the choice of $kH$ in $kE$ follows from
Theorem \ref{thm:elemain}. 
 
Now suppose that $E_1$ and $E_2$ are elementary abelian $p$-subgroups of 
$G$ such that $F = E_1 \cap E_2$ has $p$-rank at least $2$. Let $\CI_j$ be
the ideal defined by $E_j$ as above for $j = 1,2$. We claim that 
$\CI_1 = \CI_2$. The reason is that there must be a $\pi$-point 
$\alpha: k[t]/(t^p) \to kF \subset kG$ with the property that 
the equivalence class of $\alpha$ does not correspond to any point 
of $(\res_{G,F}^*)^{-1}(V)$, the inverse image $V$ under the restriction 
map $\res_{G,F}^*: V_F(k) \to V_G(k)$. Let $H$ be the image of $\alpha$.
Then $kH$ is a flat subalgebra of both $kE_1$ and $kE_2$, which 
satisfies the condition of Definition \ref{def:maxideal}.
So both $\CI_1$ and $\CI_2$ are the kernel of the restriction to $kH$. 

Now suppose $G$ is a $p$-group and that $E$, $E^{\prime}$ are any two 
elementary abelian $p$-subgroups having 
$p$-rank at least 3. By the argument of Alperin (see the bottom of page 
8 to top of page 9 of \cite{Alp}), there is a chain $E = F_1, F_2, 
\dots, F_m = E^\prime$ of elementary abelian $p$-subgroups of $G$
such that $F_i \cap F_{i+1}$ has $p$-rank at least 2. Thus by an 
easy induction and the previous paragraph, the ideal $\CI$ is independent
of the choice of $E$.

If $G$ is not a $p$-group we need only notice that if $E_1$ and $E_2$ 
are conjugate elementary abelian $p$-subgroups, then the ideals 
$\CI_1$ and $\CI_2$ must be the same. Thus, we may assume that any 
two elementary abelian $p$-subgroup are in the same Sylow $p$-subgroup.
In such a case the same proof as above works. 
\end{proof}

\begin{thm} \label{thm:general}
Assume that the conditions of $\ref{hyp}$ hold. 
Let $\CF_V$ be the idempotent $\CF$-module corresponding to 
$V$. Then $\Endul(\CF_V)$ is a local ring whose unique maximal ideal $\CI$ is 
nilpotent. Moreover, there is a number $B$ depending only on $G$ and $p$
such that the nilpotence degree of $\CI$ is at most $B$. 
\end{thm}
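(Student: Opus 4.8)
The plan is to reduce the global statement to the elementary abelian case already handled in Theorem \ref{thm:elemain}, using the detection-on-elementary-abelians machinery. First I would fix, once and for all, an elementary abelian $p$-subgroup $E$ of maximal rank (rank $\geq 3$ by Hypothesis \ref{hyp}) and a flat subalgebra $kH \subseteq kE$ of rank $s \geq 2$ with $\res^*_{G,H}(V_H(k)) \cap V = \emptyset$; by Proposition \ref{prop:notdepend} the resulting ideal $\CI = \ker\bigl(\Endul_{kG}(\CF_V) \to \Endul_{kH}(\CF_V)\bigr)$ is independent of all these choices. Since $(\CF_V)_{\downarrow H} \cong k \oplus P$ with $P$ projective (Lemma \ref{lem:restr-idem}), the quotient $\Endul_{kG}(\CF_V)/\CI$ embeds in $\Endul_{kH}(\CF_V) \cong k$, so $\CI$ has codimension at most one; it is proper because the identity does not restrict to zero, hence $\CI$ is a maximal ideal of codimension exactly one, and any endomorphism not in $\CI$ is a unit. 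This already gives that $\Endul_{kG}(\CF_V)$ is local with maximal ideal $\CI$; what remains is nilpotence of $\CI$ with a bound $B = B(G,p)$.

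For the nilpotence, the key point is that an endomorphism $\hat\theta \colon \CF_V \to \CF_V$ lies in $\CI$ precisely when the corresponding $\theta \colon k \to \CF_V$ has $\theta(1)$ landing in the part of $\CF_V$ that restricts to a projective $kH$-module, and — tracking through the local analysis — this forces $\theta(1) \in Y^{p-1}\,(\CF_V)$ for an appropriate generator $Y$ attached to $E$; more precisely, for \emph{each} maximal elementary abelian $E'$ of $G$, restricting to $E'$ and invoking Theorem \ref{thm:deeprad} / Corollary \ref{cor:extend} shows that any $\theta \in \CI$ can be lifted to $\hat\theta$ with image inside a ``deep radical'' layer of $(\CF_V)_{\downarrow E'}$. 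Composing $N$ such endomorphisms then pushes the image into the $N(p-1)$-st such layer on every $E'$. The claim to establish is that some uniform $N$, depending only on $p$ and on the maximal rank (equivalently on $|G|$), makes that layer zero after restriction to every elementary abelian $p$-subgroup of $G$; then by the Quillen-type detection of nilpotence in cohomology / the vanishing criterion recalled in Remark \ref{rem:quillen} (and the theorem of \cite{CQ}), the composite endomorphism is zero in $\Stmodg$. Concretely one shows $X^{p-1}\CF_V$ restricted to $E$ is again of the shape $k' \oplus (\text{projective})$ with $k'$ one step deeper, and iterating $r-1$ times with $r = \mathrm{rk}(E)$ kills the trivial summand; so $\CI^{N} = 0$ for $N$ bounded in terms of $p$ and $\max_E \mathrm{rk}(E) \leq \log_p |G|$.

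The main obstacle I anticipate is making the ``deep radical'' bookkeeping uniform across \emph{all} elementary abelian subgroups simultaneously: Theorem \ref{thm:deeprad} and Corollary \ref{cor:extend} are stated for a single $E$ and a single chosen $\pi$-point, so one must check that composing lifts obtained from different $E'$ is consistent — i.e. that a lift $\hat\theta$ of $\theta \in \CI$ chosen to be deep on $E$ is automatically deep (after one more composition) on every other $E'$ as well. The cleanest route is probably to not chase a single chain of lifts but to argue additively on the direct-sum decomposition of $\Homul_{kG}(k,\CF_V)$ and then, for each summand, use that the composite of enough $\CI$-endomorphisms has image whose restriction to every elementary abelian subgroup is projective, whence the composite is phantom/zero by the detection theorem; the bound $B$ then comes from the maximum over (isomorphism types of) elementary abelian $p$-subgroups of the number of radical layers of $\CF_V$ restricted there, which is controlled by $p^{\,\mathrm{rk}(E)} \leq |G|$. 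I would record this count as the definition of $B$ and finish by citing \cite{CQ} to convert ``projective on all elementary abelians'' into ``zero in $\Stmodg$''.
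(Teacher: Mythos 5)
Your first paragraph is correct and matches the paper: Proposition \ref{prop:notdepend} makes $\CI$ well-defined, $(\CF_V)_{\downarrow H}\cong k\oplus P$ gives $\Endul_{kG}(\CF_V)/\CI\hookrightarrow\Endul_{kH}(\CF_V)\cong k$, so $\CI$ is maximal of codimension one and anything outside it is a unit. You also correctly identify the two external ingredients the nilpotence needs (Theorem \ref{thm:elemain} plus the detection theorem of \cite{CQ}). But the mechanism you propose for nilpotence is off, and the bound $B$ you describe is not where it actually comes from.

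The actual argument is shorter and does not involve pushing an image ``one radical layer deeper'' $r-1$ times. Fix $\theta_1,\theta_2\in\CI$. For every maximal elementary abelian $E\leq G$ (rank $\geq 3$ by Hypothesis \ref{hyp}), Lemma \ref{lem:restr-idem} shows $(\CF_V)_{\downarrow E}\cong \CF_{V'}$ with $V'=(\res_{G,E}^*)^{-1}(V)$ a finite set of closed points, and Proposition \ref{prop:notdepend} plus transitivity of restriction show $\res_{G,E}(\theta_i)$ lands in the maximal ideal $\CI_E\subset\Endul_{kE}(\CF_{V'})$. Theorem \ref{thm:elemain} already gives $\CI_E^2=\{0\}$ --- two factors suffice, not $\mathrm{rk}(E)-1$, and this has nothing to do with counting radical layers of $\CF_V$. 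Thus $\res_{G,E}(\theta_2\theta_1)=0$ for every elementary abelian $E$. Now Theorem 2.5 of \cite{CQ} is invoked in the following form: there is an $N=N(G,p)$ such that any composition of $N$ morphisms in $\Stmodg$, each of which restricts to zero on every elementary abelian $p$-subgroup, is itself zero. Applying this to the pairwise products $\theta_{2i}\theta_{2i-1}$ gives $\CI^{2N}=\{0\}$, so $B=2N$. The bound $B$ therefore comes from the detection constant $N$ in \cite{CQ}, not from ``the maximum number of radical layers of $\CF_V$ over the elementary abelian subgroups'' as you suggest. Relatedly, your phrase ``image whose restriction to every elementary abelian subgroup is projective, whence the composite is phantom/zero'' conflates the condition needed --- that the \emph{morphism} restricts to zero in $\Endul_{kE}$ --- with a property of the image as a module; the CQ theorem is about compositions of morphisms vanishing on restriction, and your compactness/phantom language does not apply to the non-compact object $\CF_V$. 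Once you replace the deep-radical bookkeeping with the observation that $\CI_E^2=0$ outright, the ``main obstacle'' you anticipate about consistency across different $E'$ disappears: you never compose lifts chosen on different subgroups; you simply check, for each $E$ separately, that one fixed product $\theta_2\theta_1$ dies on restriction, and hand the rest to \cite{CQ}.
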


\begin{proof}
The proof is an easy consequence of the above Proposition, 
Theorem \ref{thm:elemain} and Theorem 2.5
of \cite{CQ}.  The last mentioned 
theorem can be interpreted as saying that there
is number $N$, depending only on $G$ and $p$, such that for any 
sequence $M_0, \dots, M_n$ of $kG$-modules and maps 
$\theta_i \in \Homul_{kG}(M_{i-1}, M_i)$,
$1 \leq i \leq n$, such that $n \ge N$ and 
$\res_{G, E}(\theta_i) = 0$ for every
elementary abelian subgroup $E$ of $G$, the composition
$\theta_n \cdots \theta_1= 0$. In the case that $M_i = \CF_V$ for all $i$
and $B = 2N$, choose elements $\theta_i \in \CI$. Then, 
for every $i$, the restriction of the product
$\theta_{2i}\theta_{2i-1}$ to 
every elementary abelian $p$-subgroup of $G$ vanishes, by Theorem
\ref{thm:elemain}. It follows that 
$\theta_n \cdots \theta_1= 0$ if $n \geq B$. 
\end{proof}

\section{Restrctions} \label{sec:restrict}
The aim of the section is prove a few facts about the restrictions of the
endomorphism rings. The first result is known, but perhaps has not been 
written down. 

\begin{lemma} \label{lem:rest01}
Suppose that $G$ is an elementary abelian $p$-group of order $p^r >1$.
Let $kH \neq kG$ be a flat subalgbra of $kG$. Then for all 
$n < 0$, we have that 
\[
\res_{G,H}: \HHHH^n(G,k) \to \HHHH^n(H,k)
\]
is the zero map. 
\end{lemma}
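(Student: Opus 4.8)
The claim is that for an elementary abelian $p$-group $G$ of order $p^r$, a proper flat subalgebra $kH \subsetneq kG$, and any $n < 0$, the restriction map on Tate cohomology $\HHHH^n(G,k) \to \HHHH^n(H,k)$ is zero. The plan is to reduce to the rank-one case by factoring the restriction through an intermediate maximal flat subalgebra, and then to use an explicit description of negative Tate cohomology.

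\medskip

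The plan is as follows. First I would reduce to the case where $kH$ is a \emph{maximal} flat subalgebra, i.e.\ the image of a flat map from $k[t_1,\dots,t_{r-1}]/(t_1^p,\dots,t_{r-1}^p)$: any proper flat subalgebra is contained in a maximal one (extend the generators of $kH$ to a basis of $\Rad(kG)/\Rad^2(kG)$, as in the proof of Lemma \ref{lem:complement}), and restriction is functorial, so $\res_{G,H}$ factors through $\res_{G,H'}$ for $H \subseteq H'$ maximal; if the latter is zero, so is the former. Next, by Lemma \ref{lem:complement}, write $kG = kC \otimes kH'$ where $kC \cong k[Z]/(Z^p)$ is a rank-one flat complement to $kH'$. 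Now $\HHHH^*(G,k) \cong \HHHH^*(C,k) \otimes \HHHH^*(H',k)$ is not quite the right tool because the Tate cohomology of a tensor product of algebras is not simply a tensor product in negative degrees; instead I would argue directly with the minimal resolution. Take the minimal $kG$-resolution of $k$ to be the tensor product $P_* = R_* \otimes Q_*$ of the minimal $kC$-resolution $R_*$ (with $R_i \cong kC$ for all $i$) and the minimal $kH'$-resolution $Q_*$. A negative-degree Tate cohomology class $\zeta \in \HHHH^{n}(G,k)$ with $n = -m-1 < 0$ is represented by a map $P_m \to k$ that kills the image of $P_{m+1}$, equivalently (by the standard complete-resolution picture) by a map in the other direction built from the dual of the resolution. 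I would use the description of $\HHHH^{<0}(G,k)$ via $\Homul_{kG}(\Omega^{-n}(k),k) \cong \Homul_{kG}(k,\Omega^{n}(k))$ for $n < 0$, i.e.\ as stable homomorphisms from $k$ into a high syzygy $\Omega^{n}(k) = \Omega^{-|n|}(k)$ of $k$.

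\medskip

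The key computational step is then this: upon restriction to $kH'$, the $kG$-syzygy $\Omega^{-|n|}_{kG}(k)$ decomposes as a direct sum of $\Omega^{-|n|}_{kH'}(k)$ with a \emph{free} $kH'$-module, and the component of a Tate class that survives restriction lands in the free part. Concretely, using $P_* = R_* \otimes Q_*$ with $R_i \cong kC$, the module $\Omega^{-|n|}_{kG}(k)$ restricted to $kH'$ breaks up according to the internal grading of $R_*$; the "lowest" piece corresponding to $R_0/\Rad(R_0) \cong k$ gives $\Omega^{-|n|}_{kH'}(k)$, while all higher pieces $R_i$ with $i \geq 1$, and the socle/top identifications coming from $R_i \cong kC$ being free, contribute free $kH'$-summands. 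A Tate class of strictly negative degree, being represented by a map that factors through $\Omega^{-1}$ of something (it is "not in the image of $H^0$"), maps into the free summand after restriction, hence becomes zero in the stable category. This is essentially the same bookkeeping that appears in the proof of Theorem \ref{thm:decomp} and Corollary \ref{cor:restrict}, where $X^{p-1}\CF_V \subset P_*$ is free over $kH$; here the role of $X^{p-1}$ is played by the fact that $\Rad^{p-1}(kC) = Z^{p-1}kC$ and that multiplication by $Z^{p-1}$ detects the relevant piece of the resolution.

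\medskip

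\textbf{The main obstacle.} The hard part will be making precise the claim that, under restriction to $kH'$, a negative-degree Tate class of $kG$ lands in the free summand of $(\Omega^{-|n|}_{kG}(k))_{\downarrow H'}$ — equivalently, that the image of $\res_{G,H'}$ in $\HHHH^n(H',k)$ is zero rather than merely small. The cleanest route is probably to use the tensor-product structure $P_* = R_* \otimes Q_*$ explicitly: a cocycle representing $\zeta \in \HHHH^{-m-1}(G,k)$ corresponds to a $kG$-map $\widetilde\zeta \colon P_{m+1} \to k$ vanishing on $\partial(P_{m+2})$; restricting to $kH'$ and pre-composing with the inclusion $Q_{m+1} = R_0 \otimes Q_{m+1} \hookrightarrow P_{m+1}$, one checks that $\widetilde\zeta$ restricted to this summand must vanish on the image of $\partial_{Q}$ coming from $Q_{m+2}$ but also — because of the $Z$-action mixing $R_0$ with $R_1$ and the freeness of $R_*$ over $kC$ — is forced to be a coboundary in the $kH'$-complex $Q_*$. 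I would carry this out by writing the differential $\partial_P$ in block form with respect to the $R$-grading, noting $\partial_P = \partial_R \otimes 1 \pm 1 \otimes \partial_Q$, and chasing which blocks a $kG$-cocycle condition constrains. The remaining details — that $R_i \cong kC$ for all $i$ in the minimal resolution, and that the "diagonal" map $k \to R_0 \otimes Q_*$ realizing the $kH'$-resolution is split off after restriction — are the routine verifications I would leave largely to the reader, in the style of the other proofs in this paper.
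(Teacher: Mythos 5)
Your setup is sound: the reduction to a complementary tensor decomposition $kG = kC \otimes kH'$ and the tensor-product resolution $P_* = R_* \otimes Q_*$ are in the spirit of the paper's argument, and the observation that $(\Omega^{m}_{kG}(k))_{\downarrow H'}$ splits as $\Omega^{m}_{kH'}(k)$ plus a free summand is the right framework. But the step you yourself flag as ``the main obstacle'' --- showing that a negative-degree Tate class lands in the free summand after restriction --- is exactly the content of the lemma, and your sketch of it does not close the gap. Two specific problems. First, your description of a class in $\HHHH^{-m-1}(G,k)$ as ``a $kG$-map $\widetilde\zeta\colon P_{m+1}\to k$ vanishing on $\partial(P_{m+2})$'' is the cocycle picture for \emph{positive} degree $m+1$, not negative degree $-m-1$; if you try to run the argument from that picture you should expect it to fail, since $\res_{G,H}$ is manifestly nonzero (indeed surjective) on $\HHH^{>0}$. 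The correct identification for $n=-m<0$ is $\HHHH^{-m}(G,k)\cong \Homul_{kG}(k,\Omega^m(k))$, i.e.\ classes are represented by maps \emph{from} $k$, not \emph{to} $k$. Second, the ``block chase'' with $\partial_P=\partial_R\otimes 1\pm 1\otimes\partial_Q$ is proposed but not carried out, and it is not at all routine; that is where the theorem lives.

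The paper's proof bypasses the chase with a clean structural observation that your proposal does not contain. Write $kG\cong kH\otimes kJ$ (here $kJ$ plays the role of your $kC$, and one need not reduce $kH$ to maximal rank). Take the minimal $kH$-resolution $Q_*$ of $k$ and regard each $Q_i$ as a $kG$-module with $kJ$ acting trivially; then the comparison map $\psi\colon P_*\to Q_*$ covering $\mathrm{id}_k$ can be chosen to be a $kG$-chain map, and on restriction to $kH$ it induces the restriction on negative Tate cohomology. Now a class in $\HHHH^{-m}(G,k)$ is represented by a $kG$-map $\zeta\colon k\to P_{m-1}$, whose image lies in $\operatorname{Soc}(P_{m-1})$; since $P_{m-1}$ is $kG$-free and $kJ\neq k$, this socle lies inside $\Rad(kJ)\cdot P_{m-1}$. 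Because $\psi$ is $kG$-linear and $kJ$ acts trivially on $Q_{m-1}$, the map $\psi_{m-1}$ kills $\Rad(kJ)\cdot P_{m-1}$, hence $\psi\circ\zeta=0$. This is the one-line mechanism your proposal is missing: no explicit bookkeeping with the tensor-product differential is needed, only that any homomorphism from $k$ into a free $kG$-module has image in $\Rad(kJ)$ times that module, and the target complex is $kJ$-trivial.
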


\begin{proof}  Let $\gamma: k[t_1, \dots, t_s]/(t_1^p, \dots, t_s^p) \to kG$
be a flat map whose image is $kH$. Then the classes modulo $\Rad^2(kG)$ of 
$\gamma(t_1), \dots, \gamma(t_s)$ are $k$-linearly independent in 
$\Rad(kG)/\Rad^2(kG)$. Let $b_{s+1}, \dots, b_r$ be elements that 
are chosen so that the classes of $\gamma(t_1), \dots, \gamma(t_s), 
b_{s+1}, \dots, b_r$ form a basis for $\Rad(kG)/\Rad^2(kG)$. 
Let $kJ$ be the flat subalgebra generated by $b_{s+1}, \dots, b_r$ 
so that $kG \cong kH \otimes kJ$ (see Lemma \ref{lem:complement}).

The Tate cohomology group $\HHHH^n(G,k)$ is isomorphic to 
$\Hom_{kG}(k,P_{n+1})$, where $P_*$ is a minimal projective $kG$-resolution
of $k$. The restriction is the map 
$\psi_*: \Hom_{kG}(k,P_{n-1}) \to \Hom_{kG}(k,Q_{n-1})$, where $Q_*$ is 
a minimal $kH$-projective resolution of $k$, and $\psi: P_* \to Q_*$ is a 
$kH$-chain map. Because of the decomposition $kG \cong kH \otimes kJ$, we 
may assume that $Q_*$ is a complex of $kG$-modules on which $kJ$ acts
trivially and that $\psi$ is a $kG$-chain map. However, $P_{n-1}$ is a
free $kG$-module, implying that any map $\zeta:k \to P_{n-1}$ has its 
image in $\Rad(kJ)P_{n-1}$. Because $kJ$ acts trivially on $Q_{n-1}$,
the image of $\zeta$ is in the kernel of $\psi$. 
\end{proof} 

\begin{thm} \label{thm:rest02}
Suppose that $G$ is an elementary abelian $p$-group of 
$p$-rank $r \geq 3.$  
Let $V$ be a subvariety of $V_G(k)$ that
is a union of a finite collection of closed points.
Suppose that $kH \neq kG$ is a flat subalgebra of $kG$. 
Then the maximal ideal $\CI \subseteq \Endul_{kG}(\CF_V)$ is 
the kernel of the restriction map 
$\res_{G,H}: \Endul_{kG}(\CF_V) \to \Endul_{kH}((\CF_V)_{\downarrow H})$.
\end{thm}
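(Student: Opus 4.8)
The plan is to reduce Theorem~\ref{thm:rest02} to the situation already handled in Theorem~\ref{thm:elemain}, where the flat subalgebra $kH$ has rank $s \geq 2$ and satisfies the disjointness condition $\res^*_{G,H}(V_H(k)) \cap V = \emptyset$. The statement here is more general in two respects: $kH$ can have rank $s = r-1$ down to $s = 1$, and there is no a priori assumption on whether $\res^*_{G,H}(V_H(k))$ meets $V$. So the first move is to factor the restriction to $kH$ through a better-chosen intermediate flat subalgebra. Concretely, I would pick a $\pi$-point $\alpha : k[t]/(t^p) \to kG$ whose equivalence class lies in $V_G(k) \setminus \big(\res^*_{G,H}(V_H(k)) \cup V\big)$ — this is possible since $V$ is finite and $\res^*_{G,H}(V_H(k))$ is a proper closed subvariety (as $kH \neq kG$), while $V_G(k)$ has dimension $r-1 \geq 2$, hence is not a finite union of points. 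Let $kH_0$ be the image of $\alpha$. Then $kH_0$ has rank $1$, and by Lemma~\ref{lem:restr-idem} the restriction $(\CF_V)_{\downarrow H_0} \cong k \oplus P_0$ with $P_0$ projective over $kH_0$.

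Next I would compare three restriction maps. Let $\CI$ be the maximal ideal of $\Endul_{kG}(\CF_V)$, which by Definition~\ref{def:maxideal} and Proposition~\ref{prop:notdepend} is the kernel of $\res_{G,H'}$ for any flat subalgebra $kH' \subseteq kG$ of rank $\geq 2$ disjoint from $V$; choosing $kH'$ to contain $kH_0$ (e.g.\ of rank $2$, built from $\alpha$ together with one more suitable generator), we get $\CI = \ker\big(\res_{G,H'} : \Endul_{kG}(\CF_V) \to \Endul_{kH'}(\CF_V)\big)$. Since $kH_0 \subseteq kH'$, the restriction to $kH'$ factors through the restriction to $kH_0$, so $\CI \subseteq \ker \res_{G,H_0}$. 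For the reverse inclusion, I would use that $(\CF_V)_{\downarrow H_0} \cong k \oplus P_0$, so $\Endul_{kH_0}((\CF_V)_{\downarrow H_0})$ is a local ring whose residue field is $k$ and whose maximal ideal is the one induced by maps landing in the projective part $P_0$; tracking the element $Z = \alpha(t)$ as in Theorem~\ref{thm:decomp} and Corollary~\ref{cor:restrict}, an endomorphism restricting trivially to $kH_0$ is exactly one whose image on $k$ lands in $X^{p-1}(\CF_V)$-type submodules, and Theorem~\ref{thm:elemain} (via Corollary~\ref{cor:restrict}) identifies this with $\CI$. Hence $\CI = \ker \res_{G,H_0}$.

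Finally I would transfer this to the given $kH$. The key point is Lemma~\ref{lem:rest01}: for \emph{any} proper flat subalgebra the negative Tate cohomology restricts to zero, and more relevantly, the composite $\Endul_{kG}(\CF_V) \to \Endul_{kH}((\CF_V)_{\downarrow H}) \to \Endul_{kH_0}((\CF_V)_{\downarrow H_0})$ — valid whenever $kH_0 \subseteq kH$, which I can arrange by choosing $\alpha$ to factor through $kH$ when $s \geq 2$, or handling $s = 1$ directly — shows $\ker \res_{G,H} \subseteq \ker \res_{G,H_0} = \CI$. The opposite inclusion $\CI \subseteq \ker \res_{G,H}$ follows because $\CI^2 = 0$ by Theorem~\ref{thm:elemain} while $\Endul_{kH}((\CF_V)_{\downarrow H})$, having $(\CF_V)_{\downarrow H} \cong k \oplus (\text{projective})$, is a ring in which the analogous maximal ideal squares to zero and contains the image of $\CI$; combined with the fact that $\CI$ has codimension one in $\Endul_{kG}(\CF_V)$ and the restriction map sends the identity to the identity, the image of $\CI$ lies in the maximal ideal downstairs, and a dimension/codimension count forces $\CI = \ker \res_{G,H}$.

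The main obstacle I anticipate is the rank-$1$ case, $s = 1$, where $kH$ itself is a single $\pi$-point and one cannot first pass to an intermediate rank-$2$ subalgebra sitting \emph{inside} $kH$. There one must argue directly that $\ker \res_{G,H}$ already equals $\CI$, using that $(\CF_V)_{\downarrow H} \cong k \oplus P$ with $P$ free over $kH = k[X]/(X^p)$ and that the endomorphisms killed by this restriction are precisely those mapping the generator $\tau_V(1)$ into $\Rad(kH) \cdot (\CF_V) \cap \Soc$, which the decomposition of Theorem~\ref{thm:decomp} identifies with the elements of $\CI$; the subtlety is checking that this characterization is independent of which rank-$1$ flat subalgebra (disjoint from $V$) one uses, which again rests on Proposition~\ref{prop:notdepend} and the connectivity argument of Alperin already invoked there.
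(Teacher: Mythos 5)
Your proposal handles the inclusion $\ker\res_{G,H} \subseteq \CI$ by factoring through a well-chosen rank-one flat subalgebra $kH_0 \subseteq kH$, and in spirit that reduction is reasonable (though note an internal inconsistency: in your first paragraph you take $\alpha$ with class outside $\res_{G,H}^*(V_H(k))$, while in the third paragraph you require $\alpha$ to factor through $kH$, which forces its class into $\res_{G,H}^*(V_H(k))$; you need to settle on the latter choice, insisting only that the class avoid $V$).

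The genuine gap is in the direction $\CI \subseteq \ker\res_{G,H}$, which is exactly where the paper works hardest. Your argument there rests on the claim that $(\CF_V)_{\downarrow H} \cong k \oplus (\text{projective})$. By Lemma~\ref{lem:restr-idem}, that decomposition holds only when $\res_{G,H}^*(V_H(k)) \cap V = \emptyset$. But the theorem imposes no such condition: $kH$ is an arbitrary proper flat subalgebra, and nothing prevents it from containing a $\pi$-point whose class lies in $V$. When that happens, $(\CF_V)_{\downarrow H} \cong \CF_{V'}$ with $V' = (\res_{G,H}^*)^{-1}(V) \neq \emptyset$, so $\Endul_{kH}((\CF_V)_{\downarrow H})$ is \emph{not} just $k$, and the ``image of $\CI$ lands in the maximal ideal downstairs, codimension count finishes'' reasoning does not close the argument: you would still need to show that the image of $\CI$ is actually zero in that ring.

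The paper's proof splits into two cases after reducing to $V$ a single closed point. In the easy case $V \not\subset \res_{G,H}^*(V_H(k))$, one indeed has $(\CF_V)_{\downarrow H} \cong k$ stably and the restriction of $\CI$ is trivially zero. In the hard case $V \subset \res_{G,H}^*(V_H(k))$, the paper chooses a $\pi$-point $\alpha$ inside $kH$ with class equal to the point $V$, writes $kH \cong kC \otimes kL$ with $kC = \Im(\alpha)$ and $kL$ a maximal flat subalgebra of $kH$, extends to $kG \cong kH \otimes kD$, and sets $kJ = kL\cdot kD$, a rank-$(r-1)$ flat subalgebra disjoint from $V$. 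Theorem~\ref{thm:elemain} gives $(\CF_V)_{\downarrow J} \cong k \oplus P$ and $\zeta \in \CI$ means $\zeta(1) \in P$. The key step is then a chain-map computation modeled on the proof of Lemma~\ref{lem:rest01}: the restriction to $kH$ is realized as a $kG$-equivariant chain map of augmented complexes, and because $\zeta(1)$ lies in the free part with respect to the $kD$-direction it is annihilated by that chain map. This is the mechanism forcing $\CI$ to restrict to zero on $kH$ in the hard case, and your proposal never supplies a substitute for it.
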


\begin{proof}
We write $V = \cup_{i=1}^{n} V_i$ where each $V_i$ is a closed point in
$V_G(k)$. Recall that by Proposition 
\ref{prop:nointersect}, the idempotent module $\CF_V$
is the pushout of the system $\{\tau_i: k \to \CF_i$\}. Here, $\CF_i =
\CF_{V_i}$ and for each $i$, there is a homomorphism $\nu_i: \CF_i \to \CF$,
such that the compositions $\nu_i\tau_i$ coincide.

Let $\gamma_i:k[t_1, \dots, t_{r-1}]/(t_1^p, \dots, t_{r-1}^p) \to kG$
with image $kJ$ such that 
\[
\res_{G,J}^*(V_J(k)) \cap V = \emptyset
\]
For each $i$, we have that the restriction of $\CF_i = \CF_{V_i}$ to $kJ$ 
has the form $\CF_i \cong k \oplus Q_i$ where $Q_i$ is a projective 
$kJ$-module. It follows that 
\[
\CF_{\downarrow J} \ \cong \ k \oplus \nu_1(Q_1) \oplus \dots \oplus \nu_n(Q_n).
\]
If $\zeta: k \to \CF_V$ is in $\CI$, then 
$\zeta(1) \in \sum \nu_i(Q_i)$ by Theorem 
\ref{thm:elemain} and Corollary \ref{cor:restrict} (see also Remark
\ref{rem:indep}). 

Hence, for the remainder of the proof we fix an 
element $\zeta \in \CI$, and without loss of generality, we may assume
that $\zeta(1) \in \nu_i(Q_i)$ for some fixed $i$. Our object is to show that
$\zeta$ factors through a $kH$-projective module. Notice that $\zeta$ 
must factor through $\nu_i: \CF_i \to \CF_V$. Consequently, it is 
sufficient to prove the theorem in the case that $\CF_V = \CF_i$. 
That is, we may assume that $n=1$, $V = V_i$. 

There are two cases to consider. First assume that $\res_{G,H}^*(V_H(k))$
does not contain the point $V$. In this case $(\CE_V)_{\downarrow H}$ 
is projective, and hence $(\CF_V)_{\downarrow H} \cong k$ in the stable 
category. In this case, the restriction of $\CI$ to $kH$ is zero and 
we are done. 

Next,  we assume that $V \subset \res_{G,H}^*(V_H(k))$. There is a 
$\pi$-point $\alpha: k[t]/(t^p) \to kH \subset kG$ whose equivalence 
class is the one closed point in $V$. Let $kC$ be the 
image of $\alpha$. The flat subalgebra $kH$ has a maximal flat subalgebra
$kL$ such that $kH \cong kC \otimes kL$. There is a flat subalgebra
$kD$ such that $kG \cong kH \otimes kD$
(see Lemma \ref{lem:complement}). We may assume that the subalgebra 
$kJ$ has the form  $kJ = kL \cdot kD \cong kL \otimes kD$,
because  $V$ is not contained in $\res_{G,J}^*(V_J(k))$. Therefore,  
by Theorem \ref{thm:elemain}, $(\CF_V)_{\downarrow J} \cong k \oplus P$,
where $P$ is the sum of the terms (with multiplicities) 
of a minimal augmented $kJ$-projective 
resolution of $k$ such that element $\alpha(1)$ acts
as the boundary homomorphism on 
the augmented complex. Likewise, the restriction of $\CF_V$ to $kL$ 
has the form $(\CF_V)_{\downarrow L} \cong k \oplus Q$ is the sum of the 
terms (with multiplicities) of an augmented 
minimal $kL$-projective resolution of $k$.
As in the proof of Lemma \ref{lem:rest01}, the restriction map in the 
stable category is given by a chain map of augmented complexes which 
can be shown to be a $kG$-homomorphism. Because $\zeta \in \CI$, we have that
$\zeta(1) \in P$, and as in the proof of Lemma \ref{lem:rest01}, the
chain map takes $\zeta(1)$ to zero. 

This proves that $\CI$ is in the kernel of the restriction to 
$\Endul_{kH}(\CF_V)$. The fact that it is the kernel is a consequence 
of its maximality.  
\end{proof}

\section{Finite generation}
In this final section we address the issue of 
the finite generation of the endomorphism
rings. Suppose that $V$ is a closed subvariety of $V_G(k)$. Let $\CM_V$ 
be the subcategory of all $kG$-modules whose support variety is contained
in $V$, and let $\CC_V$ be the localization of $\Stmodg$ at $\CM_V$. 
Tensoring with $\CF_V$ is the localization functor, and 
for $kG$-modules $M$ and $N$, we have that 
\[
\Hom_{\CC_V}(M,N) \cong \Homul_{kG}(M \otimes \CF_V, N \otimes \CF_V)
\]
is a module over $\End_{\CC_V}(k) \cong \Endul_{kG}(\CF_V)$. The question is 
whether it is a finitely generated module? 

The question is most relevant 
in the case that $M$ and $N$ are finitely generated modules (compact objects
in $\Stmodg$). Of course, if the support varieties of the finitely generated
objects $M$ and $N$ are both disjoint from $V$ then the finite generation
is obvious. This is because, in such a case, $\CE_V \otimes M$ and 
$\CE_V \otimes N$ are projective modules, and hence $\CF_V \otimes M \cong M$
and $\CF_V \otimes N \cong N$ in the stable category. Also, if  
the support variety of either $M$ or $N$ is contained in $V$, then the 
finite generation is also clear. In other situations
some proof is required. We show that the answer is not unlike the 
answer to the question of the finite generation of $\End_{\CC_V}(k)$. 

The proof of the following is straightforward generalization of a well
used argument. 

\begin{thm} \label{thm:codim1}
Suppose that $\zeta \in \HHH^d(G,k)$ is a non-nilpotent element. Let 
$V = V_G(\zeta)$, the collection of all homogeneous prime ideals that 
contain $\zeta$. If $M$ and $N$ are finitely generated $kG$-modules, then 
$\Hom_{\CC_V}(M,N)$ is finitely generated as a module over 
$\End_{\CC_V}(k)$. 
\end{thm}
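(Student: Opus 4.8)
Write $d=\deg\zeta$; since $\zeta$ is non-nilpotent, $d\geq1$ (and $d$ is even when $p$ is odd). The plan is to identify $\End_{\CC_V}(k)$ and $\Hom_{\CC_V}(M,N)$ with the degree-zero components of the $\zeta$-localizations of $\HHH^*(G,k)$ and of $\Ext^*_{kG}(M,N)$ respectively, and then to apply the Evens--Venkov finite generation theorem together with a standard fact from $\Proj$-theory.

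First I would recall the shape of $\CF_V$. The element $\zeta$ is represented by a map $\hat\zeta\colon\Omega^d(k)\to k$, and the Carlson module $L_\zeta=\cone(\hat\zeta)$ has $\CV_G(L_\zeta)=V$, so $L_\zeta\in\CM_V$. By the telescope construction of Rickard (\cite{R}; see also \cite{BCR2}), the idempotent module attached to $V=V_G(\zeta)$ is
\[
\CF_V\ \simeq\ \operatorname{hocolim}\bigl(k\xrightarrow{\ \hat\zeta\ }\Omega^{-d}(k)\longrightarrow\Omega^{-2d}(k)\longrightarrow\cdots\bigr).
\]
Since $N$ is finite-dimensional (hence dualizable and compact) and $\otimes$ commutes with homotopy colimits, $N\otimes\CF_V\simeq\operatorname{hocolim}_n\Omega^{-nd}(N)$; moreover $N\otimes\CF_V$ is $\CM_V$-local and the canonical map $M\to M\otimes\CF_V$ is an $\CM_V$-localization (its cone lies in the localizing subcategory generated by $\CM_V$). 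Hence the universal property of the Verdier localization, together with compactness of $M$, gives
\[
\Hom_{\CC_V}(M,N)\ \cong\ \Homul_{kG}(M,N\otimes\CF_V)\ \cong\ \varinjlim_n\Homul_{kG}\bigl(M,\Omega^{-nd}(N)\bigr),
\]
and for $n\geq1$ one has $\Homul_{kG}(M,\Omega^{-nd}(N))\cong\Ext^{nd}_{kG}(M,N)$ with transition maps equal to Yoneda multiplication by $\zeta$. Therefore
\[
\Hom_{\CC_V}(M,N)\ \cong\ \bigl(\Ext^*_{kG}(M,N)[\zeta^{-1}]\bigr)_0,\qquad \End_{\CC_V}(k)\ \cong\ \bigl(\HHH^*(G,k)[\zeta^{-1}]\bigr)_0,
\]
compatibly with the module structure (the second formula is the case $M=N=k$ of the first).

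It then remains to prove the algebraic statement: if $R$ is a finitely generated graded-commutative $k$-algebra, $\zeta\in R$ is homogeneous of degree $d\geq1$, and $W$ is a finitely generated graded $R$-module, then $(W[\zeta^{-1}])_0$ is finitely generated over $(R[\zeta^{-1}])_0$. Here $R=\HHH^*(G,k)$ is Noetherian and $W=\Ext^*_{kG}(M,N)$ is finitely generated over $R$ by the Evens--Venkov theorem. Passing to the coherent sheaf $\widetilde W$ on $\Proj R$ and restricting to the basic affine open $D_+(\zeta)=\operatorname{Spec}(R[\zeta^{-1}])_0$ (whose coordinate ring is a finitely generated $k$-algebra), one has $\Gamma(D_+(\zeta),\widetilde W)=(W[\zeta^{-1}])_0$, and a coherent sheaf on an affine scheme has a finitely generated module of global sections. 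This yields the theorem.

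The main obstacle is confined to the first part: correctly identifying $\CF_V$ with the $\zeta$-telescope for $V=V_G(\zeta)$ and checking that the resulting colimit description of $\Hom_{\CC_V}(M,N)$ agrees, as a module over $\End_{\CC_V}(k)$, with the degree-zero part of $\Ext^*_{kG}(M,N)[\zeta^{-1}]$ --- in particular that the telescope transition maps become multiplication by $\zeta$. Once this dictionary is set up, the finite generation is the ``well-used argument'': Evens--Venkov plus the elementary $\Proj$/localization fact above.
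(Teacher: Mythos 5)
Your proposal is correct, and both of its main steps take a genuinely different route from the paper's proof. For the identification $\Hom_{\CC_V}(M,N)\cong(\Ext^*_{kG}(M,N)[\zeta^{-1}])_0$ (Lemma~\ref{lem:codim1}), the paper argues directly with roofs: given a fraction $\nu\mu^{-1}$ with $\cone(\mu)\in\CM_V$, it chooses $n$ so that $\zeta^n$ annihilates $\Ext^*_{kG}(k,\cone(\mu))$ and uses the long exact sequence to lift $\zeta^n$ through $\mu$, yielding $\nu\mu^{-1}=(\nu\theta)\zeta^{-n}$. You instead invoke Rickard's identification of $\CF_{V_G(\zeta)}$ with the $\zeta$-telescope $\operatorname{hocolim}\Omega^{-nd}(k)$, commute a compact $M$ past the homotopy colimit, and read off the same localization. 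Your route is more conceptual and makes the module structure transparent, at the cost of appealing to a nontrivial (though standard) fact about idempotent modules; the paper's calculus-of-fractions argument is more elementary and self-contained. For the finite-generation step, the paper chooses homogeneous generators $\beta_j$ for $\hgs$ over the Veronese-type subring $A=\sum_n\HHH^{nd}(G,k)$ and generators $\gamma_i$ for $\Ext^*_{kG}(M,N)$ over $\hgs$, and shows the products $\gamma_i\beta_j$ generate, then inverts $\zeta$. You package the same algebra as the statement that $(\tilde W)|_{D_+(\zeta)}$ is a coherent sheaf on an affine scheme, so its global sections $(W[\zeta^{-1}])_0$ are finitely generated over $(R[\zeta^{-1}])_0$. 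Both are correct; the paper's is hands-on and the slicker $\Proj$ argument buys brevity. One small point worth making explicit if you expand the sketch: in the colimit description $\Homul_{kG}(M,\Omega^{-nd}(N))$ is the Tate group $\widehat{\Ext}^{nd}_{kG}(M,N)$, which agrees with $\Ext^{nd}_{kG}(M,N)$ for $nd>0$; the $n=0$ term is the stable $\Hom$, but inverting $\zeta$ makes this discrepancy irrelevant.
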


In fact, what we want to show is the following. 

\begin{lemma} \label{lem:codim1}
Assume the hypothesis of Theorem \ref{thm:codim1}. Then 
\[
\Hom_{\CC_V}(M, N) \cong (\Ext_{kG}^*(M,N)[\zeta^{-1}])_0.
\]
That is, viewing $\Ext_{kG}^*(M,N)$ as a module over $\hgs$, we 
invert the action of $\zeta$ and take the zero grading.
\end{lemma}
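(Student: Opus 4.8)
The plan is to identify $\Hom_{\CC_V}(M,N)$ with a localization of $\Ext^*_{kG}(M,N)$ by constructing, for the non-nilpotent element $\zeta \in \HHH^d(G,k)$, an explicit model for the idempotent functor $-\otimes\CF_V$ as a colimit over powers of $\zeta$. Concretely, since $V = V_G(\zeta)$, the complementary thick tensor ideal $\CM_V$ consists of modules whose support misses the principal open set $V_G(k)\setminus V$; the standard construction (see \cite{R}, \cite{BF}) realizes $\CE_V$ as $\operatorname{hocolim}$ of the Carlson modules $L_{\zeta^n}$ (the kernels of the maps $\Omega^{nd}k \to k$ representing $\zeta^n$), so that $\CF_V$ is the homotopy colimit of the system
\[
k \xrightarrow{\ \zeta\ } \Omega^{-d}k \xrightarrow{\ \zeta\ } \Omega^{-2d}k \xrightarrow{\ \zeta\ } \cdots.
\]
First I would make this colimit description precise and use it to compute $\Homul_{kG}(M\otimes\CF_V, N\otimes\CF_V)$. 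Because $M$ is a compact object in $\Stmodg$, the functor $\Homul_{kG}(M\otimes\CF_V, -)$ commutes with the filtered colimit defining $N\otimes\CF_V$ (and one may as well absorb the tensor with $\CF_V$ on the source using idempotency, $\CF_V\otimes\CF_V\cong\CF_V$), so
\[
\Homul_{kG}(M\otimes\CF_V, N\otimes\CF_V) \cong \varinjlim_n \Homul_{kG}\bigl(M, N\otimes\Omega^{-nd}k\bigr),
\]
where the transition maps are multiplication by $\zeta$.

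Next I would identify each term and each transition map with graded pieces of $\Ext$. We have $\Homul_{kG}(M, N\otimes\Omega^{-nd}k) \cong \Homul_{kG}(M, \Omega^{-nd}N) \cong \widehat{\Ext}^{\,nd}_{kG}(M,N)$, and under this identification the map induced by $\cdot\zeta$ is precisely multiplication by $\zeta$ in the $\hgs$-module structure on $\widehat{\Ext}^*_{kG}(M,N)$. Taking the colimit over $n$ therefore yields the degree-zero component of the $\zeta$-inverted Tate Ext module. The final point is to replace Tate Ext by ordinary Ext: since $\zeta$ is non-nilpotent and $d>0$, inverting $\zeta$ kills the bounded-below obstruction, and the natural map $\Ext^*_{kG}(M,N)\to\widehat{\Ext}^*_{kG}(M,N)$ becomes an isomorphism after localization in high enough degrees — more precisely, $\widehat{\Ext}^*_{kG}(M,N)[\zeta^{-1}]$ and $\Ext^*_{kG}(M,N)[\zeta^{-1}]$ agree (the difference, supported in bounded degrees, is $\zeta$-torsion), so their degree-zero parts coincide. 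This gives the claimed isomorphism $\Hom_{\CC_V}(M,N) \cong (\Ext^*_{kG}(M,N)[\zeta^{-1}])_0$, and with it Theorem \ref{thm:codim1} follows: $\Ext^*_{kG}(M,N)$ is a finitely generated module over the Noetherian ring $\hgs$, hence its localization at $\zeta$ is finitely generated over $\hgs[\zeta^{-1}]$, whose degree-zero subring is a finitely generated $k$-algebra mapping to $\End_{\CC_V}(k)\cong(\hgs[\zeta^{-1}])_0$ — actually this degree-zero ring \emph{is} $\End_{\CC_V}(k)$ by the case $M=N=k$ — and a standard argument shows the degree-zero part of a finitely generated graded module is finitely generated over the degree-zero part of the ring.

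The step I expect to be the main obstacle is making the interchange of $\Homul_{kG}(M\otimes\CF_V,-)$ with the homotopy colimit rigorous and checking that the transition maps are exactly multiplication by $\zeta$ rather than $\zeta$ up to a unit or sign. Compactness of $M$ (equivalently, finite generation) is what licenses commuting $\Homul$ past the filtered colimit in $\Stmodg$, but one must be careful that $N\otimes\CF_V$ is genuinely the colimit of $N\otimes\Omega^{-nd}k$ in the stable category with the correct bonding maps; verifying this requires unwinding the construction of $\CF_V$ from the Koszul-type system on $\zeta$ and tracking that the induced map on $\widehat{\Ext}$ is honest multiplication by the cohomology class. Once that bookkeeping is done, the reduction from Tate Ext to ordinary Ext and the finite-generation conclusion are routine commutative algebra.
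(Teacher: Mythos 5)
Your approach is correct, but it takes a genuinely different route from the paper's. You construct an explicit homotopy-colimit model $\CF_V \simeq \operatorname{hocolim}\bigl(k\xrightarrow{\zeta}\Omega^{-d}k\xrightarrow{\zeta}\Omega^{-2d}k\to\cdots\bigr)$, absorb the source copy of $\CF_V$ by the universal property of $\tau_V$, and then invoke compactness of $M$ to pull $\Homul_{kG}(M,-)$ past the filtered colimit; this lands you in Tate Ext, and a final step trades $\widehat{\Ext}^*$ for $\Ext^*$ after inverting $\zeta$. The paper instead argues directly from the calculus-of-fractions description of the Verdier localization: given a roof $M\xleftarrow{\mu}L\xrightarrow{\nu}N$ with $\operatorname{cone}(\mu)=X\in\CM_V$, the containment $V_G(X)\subseteq V_G(\zeta)$ forces $\zeta^n\cdot\gamma=0$ for the map $\gamma\colon M\to X$ and $n\gg 0$, hence $\zeta^n\colon\Omega^{nd}M\to M$ lifts through $\mu$ to some $\theta$, so that $\nu\mu^{-1}=(\nu\theta)\zeta^{-n}$ with $\nu\theta\in\Ext^{nd}_{kG}(M,N)$. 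The paper's route is more elementary — it never introduces the colimit model, never needs Tate cohomology, and uses compactness only implicitly — whereas your route gives a cleaner conceptual picture (localization as a filtered colimit over $\zeta$-multiplication) at the cost of two extra ingredients that themselves need justification: that $\CF_V$ really is the stated hocolim with bonding maps exactly $\zeta$ (not $\zeta$ up to a unit), and that $(\widehat{\Ext}^*[\zeta^{-1}])_0 = (\Ext^*[\zeta^{-1}])_0$. Your observation that the latter holds because every fraction can be rewritten with numerator in sufficiently high positive degree, where Tate and ordinary Ext agree, is correct and worth making explicit. Both proofs are sound; yours is heavier on machinery, the paper's is shorter and self-contained.
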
 

\begin{proof}
Suppose that in $\CC_V$, we have a morphism 
\[
\xymatrix{
\phi = \nu\mu^{-1}: & M & L \ar[l]_\mu \ar[r]^\nu & N
}
\]
for some $kG$-module $L$ such that the third object $X$ in the triangle
of $\mu$ is in $\CM_V$. This implies that for $n$ sufficiently large, 
the element $\zeta^n$ annihilates the cohomology of $X$. Consequently, 
we have as in the diagram 
\[
\xymatrix{
& \Omega^{dn}(M) \ar[d]^{\zeta^n} \ar@{-->}[dl]_{\theta} \\
L \ar[r]^{\mu} & M \ar[r]^\gamma & X 
}
\]
that the composition $\gamma\zeta^n = 0$. This implies the existence of the 
map $\theta$, and we have that $\phi = \nu\mu^{-1} = (\nu\theta)\zeta^{-n}$,
where $\nu\theta$ represents an element in $\Ext_{kG}^{nd}(M,N)$ and any
other representative defines the same element of $\Hom_{\CC_V}(M, N)$.
Likewise, the class of $\theta$ in $\Ext_{kG}^{nd}(M,M)$ is the class of
the cocycle $\theta \otimes 1: \Omega^{nd}(k) \otimes M \to k \otimes M$,
and all representatives of this class define the same element of
$\Ext_{kG}^{nd}(M,M)$. 
\end{proof}

\begin{proof}[Proof of Theorem \ref{thm:codim1}]
We now use the fact that $\Ext_{kG}^*(M,N)$ is finitely generated as a 
module over $\hgs \cong \Ext_{kG}^*(k,k)$. Let $\gamma_1, \dots,
\gamma_s$ be a set of homogeneous generators.
If $A = \sum_{n\geq 0} \Ext_{kG}^{nd}(k,k)$, then by elementary commutative 
algebra $\hgs$ is finitely generated over $A$.
Suppose that $\beta_1, \dots, \beta_r$ is a set of homogeneous generators.
Let $B = \sum_{n\geq 0} \Ext_{kG}^{nd}(M,N)$. If $\theta \in B$ is a 
homogeneous element then $\theta = \sum_{i=1}^s \gamma_i\mu_i$ for 
$\mu_i \in \hgs$. But then, for each i, 
$\mu_i = \sum_{j=1}^r \beta_j\alpha_{ij}$, for $\alpha_{ij} \in A$. 
Hence, 
\[
\theta = \sum_{i, j = 1,1}^{s,r} \gamma_i\beta_j\alpha_{ij}.
\]
That is, we see that the products $\gamma_i\beta_j$ generate $B$ as a module 
over $A$. Note that we need really only consider those whose 
degree is a multiple of $d = \text{Degree}(\zeta)$. 
Now we have that $\Hom_{\CC_V}(M, N)$ is generated by elements having 
the form $\gamma_i\beta_j\zeta^{-r}$, where 
$rd = \text{Degree}(\gamma_i\beta_j)$. 
\end{proof}
 
In the other direction we have the following. The example is far
from general, but perhaps the reader can see how other examples can 
be constructed. 

\begin{thm} \label{thm:nofingen}
Suppose that $V \subset V_G(k)$ is a closed subvariety such that there
is an elementary abelian $p$-subgroup $E$ with
\[
\res_{G,E}^*(V_E(k)) \cap V
\]
a nonempty finite set of closed points. Assume that $\vert E \vert \geq p^3$
and that $E$ has a subgroup $F$ with $\vert F \vert = p^2$ and
$\res_{G,F}^*(V_F(k)) \cap V \neq \emptyset$.  Let $M = k_F^{\uparrow G} 
= kG \otimes_{kF} k_F$ be the induced module.
Then $\End_{\CC_V}(M)$ is not finitely generated as a module
over $\End_{\CC_V}(k)$.
\end{thm}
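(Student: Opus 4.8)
The plan is to restrict everything to the rank-two subgroup $F$, where the endomorphism ring of the trivial module in the localized category is infinite-dimensional over $k$ (because $F$ has $p$-rank only $2$), while the $\End_{\CC_V}(k)$-action on the relevant piece of $\End_{\CC_V}(M)$ collapses to scalars. Concretely, by the tensor identity $M\otimes\CF_V\cong k_F^{\uparrow G}\otimes\CF_V\cong((\CF_V)_{\downarrow F})^{\uparrow G}$, and $(\CF_V)_{\downarrow F}\cong\CF_{V''}$ for $V''=(\res_{G,F}^*)^{-1}(V)$ by the behaviour of idempotent modules under restriction (cf.\ Lemma~\ref{lem:restr-idem}). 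Thus $\End_{\CC_V}(M)\cong\Homul_{kG}(\CF_{V''}^{\uparrow G},\CF_{V''}^{\uparrow G})$, which by Frobenius reciprocity equals $\Homul_{kF}(\CF_{V''},(\CF_{V''}^{\uparrow G})_{\downarrow F})$. Mackey's theorem gives $(\CF_{V''}^{\uparrow G})_{\downarrow F}\cong\CF_{V''}\oplus C$, with $\CF_{V''}$ the summand indexed by the trivial double coset, and projection onto it yields a surjection $p_0\colon\End_{\CC_V}(M)\twoheadrightarrow\Endul_{kF}(\CF_{V''})=:R_F$. I would check that $p_0$ is a homomorphism of modules over $R:=\End_{\CC_V}(k)\cong\Endul_{kG}(\CF_V)$, where $R$ acts on $R_F$ through the restriction ring homomorphism $\res_{G,F}\colon R\to R_F$; this is routine bookkeeping with the tensor-triangulated action and the unit of the induction--restriction adjunction.

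The heart of the argument is that $\res_{G,F}(R)=k\cdot\id_{\CF_{V''}}$. Factoring $\res_{G,F}=\res_{E,F}\circ\res_{G,E}$, the subvariety $V'=(\res_{G,E}^*)^{-1}(V)$ is again a finite set of closed points ($\res_{G,E}^*$ being a finite morphism), so Theorem~\ref{thm:general} applies to $E$: the ring $R_E:=\Endul_{kE}(\CF_{V'})$ is local with codimension-one maximal ideal $\CI_E$. Since $kF$ is a proper flat subalgebra of $kE$, Theorem~\ref{thm:rest02} identifies $\CI_E$ with the kernel of $\res_{E,F}\colon R_E\to\Endul_{kF}((\CF_{V'})_{\downarrow F})=R_F$; hence $\res_{E,F}(R_E)\cong R_E/\CI_E\cong k$, so $\res_{G,F}(R)\subseteq\res_{E,F}(R_E)=k\cdot\id_{\CF_{V''}}$, with equality since the identity lies in the image. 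It follows that $R$ acts on $R_F$ (as the target of $p_0$) through a $k$-algebra character $R\to k$, so the $R$-submodules of $R_F$ are exactly its $k$-subspaces; in particular, if $\End_{\CC_V}(M)$ were finitely generated over $R$, then its quotient $R_F$ would be finite-dimensional over $k$.

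It remains to see that $R_F$ is not finite-dimensional. Since $F$ has $p$-rank $2$, $V_F(k)\cong\bP^1$, and $V''$ is nonempty --- from the hypothesis $\res_{G,F}^*(V_F(k))\cap V\neq\emptyset$ --- and a proper closed subset, being finite (it lies over the finite set $\res_{G,F}^*(V_F(k))\cap V\subseteq\res_{G,E}^*(V_E(k))\cap V$). Hence $V''=V_F(\zeta)$ for a non-nilpotent homogeneous $\zeta\in\HHH^*(F,k)$, and Lemma~\ref{lem:codim1}, applied with $F$ in place of $G$, yields $R_F\cong\End_{\CC_{V''}}(k)\cong(\HHH^*(F,k)[\zeta^{-1}])_0$. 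This surjects onto the degree-zero part of $\HHH^*(F,k)[\zeta^{-1}]$ modulo nilpotents, which is the coordinate ring of the nonempty affine curve $\bP^1\setminus V''$ and hence infinite-dimensional over $k$. That contradicts the previous paragraph, proving the theorem.

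I expect the main obstacle to be the combination of the first two paragraphs: assembling the Mackey decomposition and the adjunction so that the $R$-action on the surjective image $R_F$ visibly factors through $\res_{G,F}$, and then pinning that image down to $k$ via Theorems~\ref{thm:general} and~\ref{thm:rest02}. Everything after that --- the dimension count and the classical computation of $R_F$ over the rank-two group --- is soft.
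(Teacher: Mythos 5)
Your proof is correct and follows essentially the same route as the paper: Frobenius reciprocity plus Lemma~\ref{lem:restr-idem} to identify $\CF_V\otimes M$ with $\CF_{V''}^{\uparrow G}$; the Mackey/trivial-coset summand $\CF_{V''}$ of $(\CF_{V''}^{\uparrow G})_{\downarrow F}$ to produce an $R$-module direct summand of $\End_{\CC_V}(M)$ isomorphic to $R_F=\Endul_{kF}(\CF_{V''})$; and the tower $G\supset E\supset F$ together with Theorems~\ref{thm:general} and~\ref{thm:rest02} to force $\res_{G,F}(R)=k\cdot\id$, so that finite generation over $R$ would entail finite $k$-dimension. The one place where you genuinely improve on the paper's write-up is the last step: the paper disposes of the infinite-dimensionality of $\Endul_{kF}(\CF_{V''})$ by appeal to ``earlier sections'' (which are stated under a $p$-rank $\geq 3$ hypothesis, whereas $F$ has rank $2$), while you give a self-contained justification by writing $V''=V_F(\zeta)$ in $V_F(k)\cong\bP^1$ and invoking Lemma~\ref{lem:codim1} for $F$ to identify $R_F$, modulo nilpotents, with the coordinate ring of the nonempty affine curve $\bP^1\setminus V''$. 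That is a clean fill-in of a small gap.
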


\begin{proof}
First we note that, in the category $\CC_V$, $M \cong \CF_V \otimes M$.
By Frobenius Reciprocity,
\[
\CF_V \otimes M \cong \CF_V \otimes k_F^{\uparrow G} \cong
(\CF_V)_{\downarrow F})^{\uparrow G}
\]

By Lemma \ref{lem:restr-idem}, $(\CF_V)_{\downarrow F} \cong 
\CF_{V^\prime}$ in the stable category where 
$V^\prime = (\res_{G,F}^*)^{-1}(V)$. From the hypothesis, we know
that $V^\prime$ consists of a finite set of closed points and is 
not equal to $V_F(k)$. 

By the Eckmann-Shapiro Lemma, we have the usual adjointness:
\[
\Homul_{kG}(\CF_V \otimes k_F^{\uparrow G}, \CF_V \otimes k_F^{\uparrow G})
\cong 
\Homul_{kF}((\CF_V)_{\downarrow F},
(\CF_V \otimes k_F^{\uparrow G})_{\downarrow F})
\]
\[
\cong
\Homul_{kF}((\CF_{V^\prime}),
((\CF_{V^\prime})^{\uparrow G})_{\downarrow F})
\]
Now notice that $((\CF_{V^\prime})^{\uparrow G})_{\downarrow F}$
has a direct summand isomorphic to $\CF_{V^\prime}$. That is, 
\[
(\CF_{V^\prime})^{\uparrow G} \cong \sum_{g \in G/F} g \otimes \CF_{V^\prime}
\]
as $k$-vector spaces. Here, the sum is over a complete set of left
coset representatives of $F$ in $G$.
The subspace $1 \otimes \CF_{V^\prime}$ 
is a $kF$-submodule and a direct summand.  
This also follows from the Mackey Theorem. The 
implication is that $\Homul_{kF}((\CF_{V^\prime}),
((\CF_{V^\prime})^{\uparrow G})_{\downarrow F})$ has a direct sumand
isomorphic to $\Endul_{kF}(\CF_{V^\prime})$. We have seen in earlier 
sections of this paper that this has infinite $k$-dimension. 

The action of $\End_{\CC_V}(k)$ on $\End_{\CC_V}(M)$ is given by
\[
\xymatrix@-.3pc{
\Homul_{kG}(\CF_V, \CF_V) \otimes
\Homul_{kG}(\CF_V \otimes k_F^{\uparrow G}, \CF_V \otimes k_F^{\uparrow G})
\ar[d] \\
\Homul_{kF}((\CF_V)_{\downarrow F}, (\CF_V)_{\downarrow F}) \otimes
\Homul_{kF}((\CF_V)_{\downarrow F},
(\CF_V \otimes k_F^{\uparrow G})_{\downarrow F})
\ar[d] \\
\Homul_{kF}((\CF_V)_{\downarrow F},
(\CF_V \otimes k_F^{\uparrow G})_{\downarrow F})
}
\]
where the first arrow is the isomorphism given by the Eckmann-Shapiro 
Lemma and the second is composition. The Eckmann-Shapiro Lemma is easily 
seen to hold in the stable category. 

The main point of the proof is that
in applying the Lemma, the action of $\End_{\CC_V}(k)$ factors through
the restriction map to $kF$. However, the restriction map is transitive,
and hence must factor through the restriction to $kE$. By Theorem 
\ref{thm:rest02}, the restriction of the maximal ideal $\CI$ in 
$\End_{\CC_V}(k)$ is zero. That is, the image of the restriction 
of $\End_{\CC_V}(k) = \Homul_{kG}(\CF_V, \CF_V)$ to 
$\Homul_{kF}((\CF_V)_{\downarrow F}, (\CF_V)_{\downarrow F})$
is the identity subring $k$. 

It follows from the above that in order for 
$\End_{\CC_V}(M)$ to be finitely generated over $\End_{\CC_V}(k)$,
it must be finite dimensional. However, we have already noted that 
this is not the case.
\end{proof}

\section{Examples} \label{sec:exm}
We end the paper with a couple of examples and a theorem on the structure
of the idempotent modules. For the first example and most of the section
suppose that $G = SL_2(p^n)$ for $n > 2$, and let $k$ be an algebraically
closed field of characteristic $p$.

Let $a$ be a generator for the multiplicative group $\bF_{p^n}^\times$.
The Borel subgroup $B$ of $G$ is generated by the elements 
\[
t = \begin{bmatrix} a & 0 \\ 0 & a^{-1} \end{bmatrix} \quad \text{and} \quad   
x_i = \begin{bmatrix} 1 & a^i \\ 0 & 1 \end{bmatrix} \quad \text{for} \quad
i = 0, \dots, n-1.
\]
Then $S = \langle x_1, \dots, x_n \rangle$ is a Sylow $p$-subgroup and 
$B$ is its normalizer in $G$.  

The variety $V_S(k) \cong \bP^{n-1}$, projective $(n-1)$-space. The group 
$B$ acts on $S$ by conjugation and hence also on $V_S(k)$. The action of 
$T = \langle t \rangle$ is given by the relation 
\[
\begin{bmatrix} b & 0 \\ 0 & b^{-1} \end{bmatrix}
\begin{bmatrix} 1 & u \\ 0 & 1 \end{bmatrix}
\begin{bmatrix} b^{-1} & 0 \\ 0 & b \end{bmatrix} \quad 
= \quad \begin{bmatrix} 1 & ub^2 \\ 0 & 1 \end{bmatrix}
\]
for $u$ in $\bF_{p^n}$ and $b$ in $\bF_{p^n}^\times$. The thing to notice
is that if $b^2$ is in the prime field $\bF_p$, then this element of $T$ 
operates on $S$ (viewed as an $\bF_p$-vector space $S \cong (\bZ/(p))^n$)
by a scalar matrix with diagonal entries equal to $b^2$. This implies that
the element of $T$ acts trivially on the projective space $V_S(k)$. With 
this in mind,  let
\[
d \ = \ \begin{cases} p-1 & \text{if $p=2$ or $n$ is odd}, \\
2(p-1) & \text{otherwise}. \end{cases}
\]
It is easily checked that $d$ is the order of the subgroup of $T$ that 
acts trivially on $V_G(S)$. Let $m = (p^n-1)/d$, and let $D$ be the 
subgroup of $B$ generated by $c = t^{m}$ and $S$.  

Choose $W$ to be any subvariety of $V_S(k) = V_D(k)$ 
that consists of a single point whose 
stabilizer in $T = \langle t \rangle$ is generated by $c$. 
Let $V = \res_{B,S}^*(W)$. Then the inverse image of $V$ under the 
restriction map $\res_{B, S}^*$ is the union of the points in the orbit
of $W$ under the action of $T$. Let $V = V_0, \dots, V_{m-1}$ be the 
images under $V$ of this action. Let $\CF_V$ be the idempotent 
$kD$-module corresponding to $V$.

The induced module $\CF_V^{\uparrow B} = kB \otimes_{kD} \CF_V$
has the form $\sum_{j=0}^{m-1} \CF_{V_j}$ on restriction to $D$. We may 
assume that $\CF_{V_j} = t^{j} \otimes \CF_V$ in this context. Thus we 
have maps $t^j \otimes \tau_V: t^j \otimes k \to t^j \otimes \CF_V$. 
That is, $T$ acts on this system and also acts on the pushout that is 
obtained by identifying the images of the maps $t^j \otimes \tau_V$.
Explicitly, let $N$ be the submodule of 
$k_D^{\uparrow B} \cong kB \otimes_{kD} k$ generated by 
$1 \otimes 1 - t \otimes 1$. This is a $kB$-submodule of dimension 
$m-1$. Let $N^\prime$ be its image in $\CF_V^{\uparrow B}$, that is 
the submodule generated by $1 \otimes \tau_V(1) - t \otimes \tau_V(1)$.
Then we have a triangle
\[
\xymatrix{
{} \ar[r] & k_D^{\uparrow G}/N \ar[r] & \CF_V^{\uparrow B}/N^\prime \ar[r] &
\ar[r] \CF_V^{\uparrow B}/k_D^{\uparrow G} \ar[r] & {}
}
\]
Next, a check of the varieties can be done at the level of the Sylow 
$p$-subgroup $S$. In particular, the variety of $\CE_V$ is $\{V\}$, while 
that if $\CF_V$ is $\CV_G(k) \setminus \{V\}$. Thus, by the tensor 
product theorem $\CE_V \otimes \CF_V$ is projective, and zero in the stable
category. Tensoring with the triangle, we see that both 
$\CE_V$ and $\CF_V$ are idempotent 
modules. It can be seen that the relevant universal properties of the 
triangle also hold. 

The endomorphism ring $\Endul_{kB}(\CF_V)$ is the set of all element of 
$\Endul_{kS}((\CF_V)_{\downarrow S})$ that are stable under the action 
of $T$. The identity element is certainly $T$-stable. 
Let $\CI$ be the maximal ideal in $\Endul_{kS}((\CF_V)_{\downarrow S})$.
Because, by Theorem
\ref{thm:elemain}, $\CI^2 = \{0\}$, any element of $\CI$ that is 
invariant under $T$ is an orbit sum of the $T$-action. These elements form
a maximal ideal of codimension one, and the product of any two elements
in this ideal is zero. 

The idempotent modules for $G$ can be obtained by using the fact that 
$S$ is TI-subgroup (trivial intersection). That is, for any element 
$g \in G$ we have that $S \cap gSg^{-1}$ is $S$ if $g \in N_G(S) = B$ and 
is $\{1\}$ otherwise. Then by the Mackey Theorem, we have that 
\[
((\CF_V)^{\uparrow G})_{\downarrow B} \cong \CF_V \oplus P
\]
where $P$ is projective. Consequently, the induced module 
$(\CF_V)^{\uparrow G})$ has a single nonprojective direct summand
that is $\CF_{V^\prime}$ where $V^\prime = \res_{B,G}^*(V)$.
It follows that $\Endul_{kG}(\CF_{V^\prime})$ is isomorphic to
$\Endul_{kG}(\CF_{V})$.

All of this has a sweeping generalization that is reminiscent of the work
in \cite{Bneuc} and \cite{Cind}.

For notation we say that if $S$ is an elementary 
abelian $p$-subgroup of a group $G$, its diagonalizer $D = D_G(S)$ is
the subgroup of $N_G(S)$ consisting of all elements whose conjugation
action is by a scalar matrix on the $\bF_p$-vector space of $S$ when 
written as an additive group. As in the above example, it is the subgroup
of elements of $N_G(S)$ that acts trivially on $V_S(k)$.  

\begin{thm} \label{thm:genex}
Suppose that $S$ is a normal elementary abelian $p$-subgroup of $G$ and 
that $D = D_G(S)$. Let $U$ be a subvariety of $V_S(k)$ consisting of 
a single point and assume that $U$ is not contained in $\res_{S, R}^*(V_R(k))$
for any subgroup $R$ of $S$. Let $W = \res_{D,S}^*(U)$. 
Let $V = \res_{G, D}^*(W)$. Let $N$ be the kernel of the natural homomorphism 
$\varphi: k_D^{\uparrow G} \to k$ given by $g \otimes 1 \mapsto 1$ for any 
$g \in G$. Then we have a triangle 
\[
\xymatrix{
{} \ar[r] & k_D^{\uparrow G}/N \ar[r]^{\tau} & \CF_W^{\uparrow G}/\tau(N)
\ar[r] & \Omega^{-1}(\CE_W)^{\uparrow G} \ar[r] & {}
}
\]
where $\tau$ is the map induced on quotients by $1 \otimes \tau_W:
k_D^{\uparrow G} \to \CF_W^{\uparrow G}$. In particular, we have that 
\[
\CF_V \cong \CF_W^{\uparrow G}/\tau(N) \ \text{ and } \
\CE_V \cong \CE_W^{\uparrow G},
\]
and the triangle is the triangle of idempotent modules associated to $V$.
\end{thm}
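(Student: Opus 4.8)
The plan is to obtain the triangle by \emph{inducing} the idempotent triangle $\CS_W$ from $kD$ to $kG$ and collapsing the middle term along $N$, and then to recognize the result as the idempotent triangle of $V$ through a support computation. Since $kG$ is free as a right $kD$-module, induction $(-)^{\uparrow G}=kG\otimes_{kD}(-)\colon\Stmod(kD)\to\Stmod(kG)$ is exact and takes distinguished triangles to distinguished triangles, so applying it to $\CS_W\colon\CE_W\xrightarrow{\sigma_W}k\xrightarrow{\tau_W}\CF_W\to\Omega^{-1}\CE_W$ produces a distinguished triangle $\CE_W^{\uparrow G}\xrightarrow{\sigma_W^{\uparrow G}}k_D^{\uparrow G}\xrightarrow{\tau_W^{\uparrow G}}\CF_W^{\uparrow G}\to\Omega^{-1}(\CE_W)^{\uparrow G}$. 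The module map $\tau_W$ is injective (its kernel is a $kD$-submodule of $k$ and $\tau_W\neq 0$), hence so is $\tau_W^{\uparrow G}$, and $1\otimes\tau_W$ therefore carries $N$ isomorphically onto $\tau(N)$. Comparing the short exact sequences $0\to N\to k_D^{\uparrow G}\xrightarrow{\varphi}k\to0$ and $0\to\tau(N)\to\CF_W^{\uparrow G}\to\CF_W^{\uparrow G}/\tau(N)\to0$ along the map induced by $1\otimes\tau_W$ (equivalently, applying the octahedral axiom to $\CE_W^{\uparrow G}\xrightarrow{\sigma_W^{\uparrow G}}k_D^{\uparrow G}\xrightarrow{\varphi}k$), the snake lemma yields the short exact sequence $0\to k_D^{\uparrow G}/N\xrightarrow{\tau}\CF_W^{\uparrow G}/\tau(N)\to\Omega^{-1}(\CE_W)^{\uparrow G}\to0$ and hence the asserted triangle; a diagram chase identifies its rotated connecting map $\CE_W^{\uparrow G}\to k_D^{\uparrow G}/N=k$ with $\varphi\circ\sigma_W^{\uparrow G}$.

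By the uniqueness of triangles $A\to k\to B\to\Omega^{-1}A$ with $A\in\CM_V^{\oplus}$ and $B$ being $\CM_V$-local, it now suffices to show $\CE_W^{\uparrow G}\in\CM_V^{\oplus}$ and that $\CF_W^{\uparrow G}/\tau(N)$ is $\CM_V$-local; then automatically $\CE_W^{\uparrow G}\cong\CE_V$ and $\CF_W^{\uparrow G}/\tau(N)\cong\CF_V$. The first is immediate: $\CE_W$ is a direct limit of finitely generated $kD$-modules of support $\{W\}$ (Rickard's construction), and induction is exact, commutes with direct limits, and sends such a module to a finitely generated $kG$-module of support $\res^*_{G,D}(\{W\})=\{V\}$, so $\CE_W^{\uparrow G}\in\CM_V^{\oplus}$. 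For locality I would compute $(\CF_W^{\uparrow G}/\tau(N))_{\downarrow D}$. Note that $D=D_G(S)$ is automatically normal in $G$ (conjugation preserves the property of acting on $S$ by a scalar, since scalars are central in $\mathrm{Aut}(S)$), so Mackey's theorem gives $(\CF_W^{\uparrow G})_{\downarrow D}\cong\bigoplus_{gD\in G/D}{}^{g}\CF_W$, each ${}^{g}\CF_W$ being the idempotent $\CF$-module over $kD$ of the closed point $g\cdot W$. Under this decomposition $\tau(N)$ is spanned by the differences ${}^{g}\tau_W(1)-{}^{g'}\tau_W(1)$, so $(\CF_W^{\uparrow G}/\tau(N))_{\downarrow D}$ is the pushout of the maps ${}^{g}\tau_W\colon k\to{}^{g}\CF_W$ over their common copy of $k$. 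Since the non-degeneracy hypothesis on $U$ forces $\mathrm{Stab}_G(U)=D$, hence $\mathrm{Stab}_G(W)=D$, the points $g\cdot W$ ($gD\in G/D$) are pairwise distinct, so the many-point form of Proposition~\ref{prop:nointersect} identifies this pushout with $\CF_Y$ over $kD$, where $Y=\bigcup_{gD}\{g\cdot W\}=(\res^*_{G,D})^{-1}(V)$; and, as in Lemma~\ref{lem:restr-idem}, $\CF_Y=(\CF_V)_{\downarrow D}$. Since the support of a restriction is the preimage of the support and $V$ lies in the image of $\res^*_{G,D}$, this gives $V\notin\CV_G(\CF_W^{\uparrow G}/\tau(N))$, so $\CE_V\otimes(\CF_W^{\uparrow G}/\tau(N))$ has empty support, is stably zero by the tensor product theorem, and $\CF_W^{\uparrow G}/\tau(N)$ is $\CM_V$-local. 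This completes the identification.

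The main obstacle is the identity $\mathrm{Stab}_G(U)=D$, which is exactly what the hypothesis on $U$ is meant to provide: if $g\in G$ sends the closed point $U$ to itself, then $U$ lies in an eigenspace of the action of $g$ on $S\otimes_{\mathbb{F}_p}k$, and if the corresponding eigenvalue is a scalar this eigenspace is $\mathbb{F}_p$-rational, hence of the form $R\otimes k$ for a subgroup $R\le S$ with $U\in\res^*_{S,R}(V_R(k))$, forcing $R=S$ and $g\in D$. The auxiliary points needed along the way---that $\res^*_{D,S}$ is a bijection on points (since $S\trianglelefteq D$ and $D$ acts trivially on $V_S(k)$), that $\CE_W$ restricts on $kS$ to the Rickard idempotent of the single closed point $U$ (so that ${}^{g}\CE_W$ and ${}^{g}\CF_W$ restrict to the idempotents of $g\cdot U$), the variety formula for induced modules, and the many-point form of Proposition~\ref{prop:nointersect}---are routine, and the remainder is bookkeeping with the projection formula, Mackey's theorem, and the uniqueness of idempotent triangles.
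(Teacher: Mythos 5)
Your overall strategy closely matches what the paper intends: induce the idempotent triangle from $kD$ to $kG$, collapse the middle term along $N$, and identify the resulting triangle as the idempotent triangle for $V$. The paper's own proof is only a sketch referring the reader back to the $SL_2(p^n)$ example and to Benson's theorem, so your explicit appeal to the uniqueness of triangles $A\to k\to B\to\Omega^{-1}A$ with $A\in\CM_V^{\oplus}$ and $B$ being $\CM_V$-local is a genuine improvement over the paper's informal ``check the varieties and universal properties.'' The observations that $D$ is normal in $G$ (because $S$ is normal and scalar matrices are central in $\mathrm{Aut}(S)$) and that the multi-point version of Proposition~\ref{prop:nointersect} applies to the Mackey decomposition $(\CF_W^{\uparrow G})_{\downarrow D}\cong\bigoplus_{gD}{}^{g}\CF_W$ are both correct and useful.

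There is, however, a real gap in the step $\mathrm{Stab}_G(U)=D$, and it is a gap in the paper's own ``eigenvalue argument'' as much as in yours. You argue: if $g\cdot U=U$ then the conjugation matrix $A$ of $g$ on $S$ has $v$ (a vector representing $U$) as an eigenvector, and \emph{if the eigenvalue $\lambda$ lies in $\mathbb{F}_p$} then $\ker(A-\lambda I)$ is $\mathbb{F}_p$-rational, forcing it to be all of $S\otimes k$ by the hypothesis on $U$, so $A$ is scalar. You never address the case $\lambda\notin\mathbb{F}_p$, and it can occur while the hypothesis on $U$ is satisfied. For instance, take $A\in GL_3(\mathbb{F}_2)$ with irreducible cubic characteristic polynomial and $v$ an eigenvector for a root $\lambda\in\mathbb{F}_8\setminus\mathbb{F}_2$; the three Galois conjugates of $kv$ span $k^3$, so $kv$ lies in no proper $\mathbb{F}_2$-rational subspace and $U=[v]$ satisfies the theorem's hypothesis, yet $g$ with conjugation matrix $A$ fixes $U$ and is not in $D$. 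In such a situation the translates $g\cdot W$, $gD\in G/D$, are \emph{not} pairwise distinct, the pushout description of $(\CF_W^{\uparrow G}/\tau(N))_{\downarrow D}$ breaks down, and $\CE_W^{\uparrow G}$ has the wrong multiplicity, so $\CE_V\not\cong\CE_W^{\uparrow G}$. To close the gap one must either strengthen the hypothesis on $U$ (e.g.\ require the coordinates of $v$ to be algebraically independent over $\mathbb{F}_p$, which is what the eigenvalue computation actually uses), or simply assume $\mathrm{Stab}_G(U)=D$ outright, as the paper's $SL_2(p^n)$ discussion does when it picks $W$ with stabilizer exactly $\langle c\rangle$ in $T$.
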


\begin{proof} 
This follows by a very similar argument as in the above example. 
Note that, by an eigenvalue argument,  
$D$ is precisely the subgroup of $G$ that fixes the point
$U$ in $V_S(k)$.  The fact 
that $\CE_V$ is induced from a $kD$-module follows also from Theorem 1.5
of \cite{Bneuc}, which is proved in even greater generality. 
\end{proof}

\begin{rem} \label{rem:exm}
If the group $G$ in the theorem satisfies the Hypothesis \ref{hyp}, then
Theorem \ref{thm:general} assurs us that $\Endul_{kG}(\CF_V)$ has a unique
maximal ideal $\CI$ having codimension one and that $\CI$ is nilpotent. 
Unlike the example we may not assume that $\CI^2 = \{0\}$. For an
example, let $p=2$, and $G = H \times S$ where $H$
is a semidihedral group and $S$ has order $2$. So if 
$V = \res_{G, S}^*(V_S(k))$,
then by Theorem 7.4 of \cite{Ctriv}, $\Endul_{kG}(\CF_V)$ is the nonpositive
Tate cohomology ring of $H$ which by \cite{BC2} has nonzero products in its 
maximal ideal. Note that in this particular case $D_G(S) = G$, so that 
the above theorem says nothing new. 
\end{rem}


\begin{thebibliography}{10}

\bibitem{Alp}
J.~L. {Alperin}.
\newblock {A construction of endo-permutation modules}.
\newblock {\em {J. Group Theory}}, 4(1):3--10, 2001.

\bibitem{BF}
Paul {Balmer} and Giordano {Favi}.
\newblock {Generalized tensor idempotents and the telescope conjecture}.
\newblock {\em {Proc. Lond. Math. Soc. (3)}}, 102(6):1161--1185, 2011.

\bibitem{Bbook}
D.~J. {Benson}.
\newblock {\em {Representations and cohomology. II: Cohomology of groups and
  modules}}, volume~31.
\newblock New York etc.: Cambridge University Press, 1991.

\bibitem{Bneuc}
D.~J. Benson.
\newblock Cohomology of modules in the principal block of a finite group.
\newblock {\em New York J. Math.}, 1:196--205, 1995.

\bibitem{BC2}
D.~J. {Benson} and Jon~F. {Carlson}.
\newblock {Products in negative cohomology}.
\newblock {\em {J. Pure Appl. Algebra}}, 82(2):107--129, 1992.

\bibitem{BCR2}
D.~J. {Benson}, Jon~F. {Carlson}, and J.~{Rickard}.
\newblock {Complexity and varieties for infinitely generated modules. II}.
\newblock {\em {Math. Proc. Camb. Philos. Soc.}}, 120(4):597--615, 1996.

\bibitem{BCR}
D.~J. {Benson}, Jon~F. {Carlson}, and Jeremy {Rickard}.
\newblock {Thick subcategories of the stable module category}.
\newblock {\em {Fundam. Math.}}, 153(1):59--80, 1997.

\bibitem{Cind}
Jon~F. Carlson.
\newblock Varieties and induction.
\newblock {\em Bol. Soc. Mat. Mex., III. Ser.}, 2(2):101--114, 1996.

\bibitem{CQ}
Jon~F. {Carlson}.
\newblock {Cohomology and induction from elementary Abelian subgroups}.
\newblock {\em {Q. J. Math.}}, 51(2):169--181, 2000.

\bibitem{Clocvar}
Jon~F. {Carlson}.
\newblock {Idempotent modules, locus of compactness and local supports}.
\newblock preprint, 2022.

\bibitem{Ctriv}
Jon~F. Carlson.
\newblock Negative cohomology and the endomorphism ring of the trivial module.
\newblock {\em J. Pure Appl. Algebra}, 226(9):13, 2022.
\newblock Id/No 107046.

\bibitem{CTVZ}
Jon~F. {Carlson}, Lisa {Townsley}, Luis {Valeri-Elizondo}, and Mucheng {Zhang}.
\newblock {\em {Cohomology rings of finite groups. With an appendix:
  Calculations of cohomology rings of groups of order dividing 64.}}, volume~3.
\newblock Dordrecht: Kluwer Academic Publishers, 2003.

\bibitem{FP}
Eric~M. {Friedlander} and Julia {Pevtsova}.
\newblock {\(\Pi\)-supports for modules for finite group schemes.}
\newblock {\em {Duke Math. J.}}, 139(2):317--368, 2007.

\bibitem{Neem}
Amnon {Neeman}.
\newblock {\em {Triangulated categories}}, volume 148.
\newblock Princeton, NJ: Princeton University Press, 2001.

\bibitem{Quil}
Daniel Quillen.
\newblock The spectrum of an equivariant cohomology ring. {I}. {II}.
\newblock {\em Ann. Math. (2)}, 94:549--572, 573--602, 1971.

\bibitem{R}
Jeremy {Rickard}.
\newblock {Idempotent modules in the stable category}.
\newblock {\em {J. Lond. Math. Soc., II. Ser.}}, 56(1):149--170, 1997.

\end{thebibliography}
\end{document}